\documentclass[12pt]{amsart}

\setlength{\textheight}{21cm}
\setlength{\textwidth}{16cm}
\setlength{\topmargin}{0cm}
%\topmargin -0.3truein
\setlength{\parskip}{0.3\baselineskip}
\hoffset=-1.4cm
\baselineskip=13pt

\usepackage{amsmath}
\usepackage{amssymb}
\usepackage[all]{xy}

\numberwithin{equation}{section}

\newtheorem{thm}{Theorem}[section]
\newtheorem{lem}[thm]{Lemma}
\newtheorem{prop}[thm]{Proposition}
\theoremstyle{definition}
\newtheorem*{ack}{Acknowledgements}
\newtheorem{cor}[thm]{Corollary}
\theoremstyle{definition}
\newtheorem{rem}[thm]{Remark}
\theoremstyle{definition}
\newtheorem{exam}[thm]{Example}
\newtheorem{exam-nota}[thm]{Example-Notation}

\theoremstyle{definition}
\newtheorem{dfn}[thm]{Definition}

\newtheorem{dfn-nota}[thm]{Definition-Notation}

\newtheorem{dfn-lem}[thm]{Lemma-Definition}
\newtheorem{dfn-thm}[thm]{Theorem-Definition}

\newcommand{\beqa}{\begin{eqnarray*}}
\newcommand{\eeqa}{\end{eqnarray*}}

\newcommand{\C}{\mbox{${\mathbb C}$}}

\newcommand{\Ad}{{\rm Ad}}

\newcommand{\fa}{\mbox{${\mathfrak a}$}}

\newcommand{\fk}{\mbox{${\mathfrak k}$}}

\newcommand{\fh}{\mbox{${\mathfrak h}$}}
\newcommand{\fn}{\mbox{${\mathfrak n}$}}

\newcommand{\fb}{\mbox{${\mathfrak b}$}}
\newcommand{\fz}{\mbox{${\mathfrak z}$}}
\newcommand{\fm}{\mbox{${\mathfrak m}$}}

\newcommand{\ad}{\text{ad}}

\newcommand{\Co}{\mathbb{C}}

\newcommand{\sfibregl}{\mathfrak{gl}}

\newcommand{\xifij}{\xi_{f_{i,j}}}

\newcommand{\Gamman}{\Gamma_{n}^{a_{1},a_{2}, \cdots , a_{n-1}}}
\newcommand{\Gammai}{\Gamma_{i}^{a_{1},a_{2}, \cdots , a_{i}}}

\newcommand{\Gprod}{Z_{1}\times\cdots\times Z_{n-1}}
\newcommand{\sol}{\Xi^{i}_{c_{i}, \, c_{i+1}}}

\newcommand{\nsol}{\Xi^{i}_{0,0}}
\newcommand{\glfibre}{\sfibregl(n)_{c}}
\newcommand{\glsfibre}{\sfibregl(n)^{sreg}_{c}}

\newcommand{\glnil}{\sfibregl(n)_{0}}
\newcommand{\glsnil}{\sfibregl(n)^{sreg}_{0}}
\newcommand{\fgl}{\mathfrak{gl}}

\newcommand{\Ct}{\Co^{\times}}
\newcommand{\orbi}{\mathcal{O}^{i}_{a_{i}}}
\newcommand{\orbu}{\mathcal{O}_{U}^{i}}
\newcommand{\orbl}{\mathcal{O}_{L}^{i}}

\newcommand{\nilrad}{\fn_{a_{1},\cdots, a_{n-1}}}
\newcommand{\fgltheta}{\fgl(n)_{\Theta}}
\newcommand{\fglomega}{\fgl(n)_{\Omega}}

\newcommand{\pci}{p_{c_{i}}(t)}
\newcommand{\pcp}{p_{c_{i+1}}(t)}
\newcommand{\orbj}{\mathcal{O}_{a_{j}}^{j}}
\newcommand{\torbi}{\mathcal{O}^{i}_{\widetilde{a_{i}}}}
\newcommand{\tGamman}{\Gamma_{n-1}^{\widetilde{a_{1}},\widetilde{a_{2}}, \cdots , \widetilde{a_{n-1}}}}

\begin{document}
\title[]{The Orbit Structure of the Gelfand-Zeitlin group on $n\times n$ matrices}

%\author[S. Evens]{Sam Evens}
%\address{Department of Mathematics, University of Notre Dame, Notre Dame, 46556}
%\email{sevens@nd.edu}

\author[M. Colarusso]{Mark Colarusso}
%\address{Department of Mathematics, University of Notre Dame, Notre Dame, 46556}
\email{mcolarus@nd.edu}
\subjclass[2000]{ Primary 14L30, 14R20, 37J35, 53D17} 
\address{Department of Mathematics, University of Notre Dame, Notre Dame, IN, 46556}
\maketitle
{\it{ Dedicated to Bertram Kostant on the occasion of his 80th birthday. }}
%\tableofcontents

\begin{abstract}
In recent work (\cite{KW1},\cite{KW2}), Kostant and Wallach construct an action of a simply connected Lie group $A\simeq \Co^{{n\choose 2}}$ on $\fgl(n)$ using a completely integrable system derived from the Poisson analogue of the Gelfand-Zeitlin subalgebra of the enveloping algebra.  In \cite{KW1}, the authors show that $A$-orbits of dimension ${n\choose 2}$ form Lagrangian submanifolds of regular adjoint orbits in $\fgl(n)$.  They describe the orbit structure of $A$ on a certain Zariski open subset of regular semisimple elements.  In this paper, we describe all $A$-orbits of dimension ${n\choose 2}$ and thus all polarizations of regular adjoint orbits obtained using Gelfand-Zeitlin theory.
\end{abstract}

\section{Introduction}

In recent papers (\cite{KW1}, \cite{KW2}), Bertram Kostant and Nolan Wallach construct an action of a complex, commutative, simply connected Lie group $A\simeq \C^{{n\choose 2}}$ on the Lie algebra of $n\times n$ complex matrices $\fgl(n)$.  The dimension of this group is exactly half the dimension of a regular adjoint orbit in $\fgl(n)$ and orbits of $A$ of dimension ${n\choose 2}$ are Lagrangian submanifolds of regular adjoint orbits.  We refer to the group $A$ introduced by Kostant and Wallach as the Gelfand-Zeitlin group, because of its connection with the Gelfand-Zeitlin algebra, as we will explain in section \ref{s:GZ}.  

%The group $A$ is defined to be the simply connected, complex Lie group corresponding to an ${n\choose 2}$-dimensional Lie algebra $\fa$ of Hamiltonian vector fields constructed using Gelfand-Zeitlin theory.  The action of $A$ on $\fgl(n)$ is then given by integrating the Lie algebra $\fa$ to a global action of $\Co^{{n\choose 2}}$ on $\fgl(n)$.

 The group $A$ and its action are constructed as follows.  Given $i<n$, we can think of $\fgl(i)\hookrightarrow \fgl(n)$ as a subalgebra by embedding an $i\times i$ matrix into the top left-hand corner of an $n\times n$ matrix. For $1\leq i\leq n$ and $1\leq j\leq i $, let $f_{i,j}(x)$ be the polynomial on $\fgl(n)$ defined by $f_{i,j}(x)=tr(x_{i}^{j})$, where $x_{i}$ denotes the $i\times i$ submatrix in the top left-hand corner of $x$.  In \cite{KW1}, it is shown that the functions $\{f_{i,j} |1\leq i\leq n,\, 1\leq j\leq i\}$ are algebraically independent and Poisson commute with respect to the Lie-Poisson structure on $\fgl(n)\simeq\fgl(n)^{*}$.  The corresponding Hamiltonian vector fields $\xifij$ generate a commutative Lie algebra $\fa$ of dimension ${n\choose 2}$.  The group $A$ is defined to be the simply connected, complex Lie group that corresponds to the Lie algebra $\fa$.  The vector fields $\xifij$ are complete (Theorem 3.5 in \cite{KW1}), and therefore $\fa$ integrates to a global action of $\C^{n\choose 2}$ on $\fgl(n)$.  This action of $\Co^{n\choose 2}$ defines the action of the group $A$ on $\fgl(n)$.  

Our goal in this paper is to describe all $A$-orbits of dimension ${n\choose 2}$.  An element $x\in\fgl(n)$ is called strongly regular if and only if its $A$-orbit is of dimension ${n\choose 2}$.  One way of studying such orbits is to study the action of $A$ on fibres the map $\Phi: \fgl(n)\to \C^{\frac{n(n+1)}{2}}$
\begin{equation}\label{eq:imoment}
\Phi(x)=(p_{1,1}(x_{1}), p_{2,1}(x_{2}), \cdots, p_{n,n}(x)),
\end{equation}
where $p_{i,j}(x_{i})$ is the coefficient of $t^{j-1}$ in the characteristic polynomial of $x_{i}$.

In Theorem 2.3 in \cite{KW1}, the authors show that this map is surjective and that every fibre of this map $\Phi^{-1}(c)=\fgl(n)_{c}$ contains strongly regular elements.  Following \cite{KW1}, we denote the strongly regular elements in the fibre $\glfibre$ by $\glsfibre$.  By Theorem 3.12 in \cite{KW1}, the $A$-orbits in $\fgl(n)^{sreg}$ are precisely the irreducible components of the fibres $\glsfibre$.  Thus, our study of the action of $A$ on $\fgl(n)^{sreg}$ is reduced to studying the $A$-orbit structure of the fibres $\glsfibre$.  In \cite{KW1}, Kostant and Wallach describe the $A$-orbit structure on a special class of fibres that consist of certain regular semisimple elements.  In this paper, we describe the $A$-orbit structure of $\glsfibre$ for any $c\in \C^{\frac{n(n+1)}{2}}$.  

In section \ref{s:GZ}, we describe the construction of the group $A$ in \cite{KW1} in more detail.  In section \ref{s:generics}, we describe the results in \cite{KW1} about its orbit structure.  We summarize theses results briefly here.  For any $x\in\fgl(i)$, let $\sigma(x)$ denote the spectrum of $x$.  In \cite{KW1}, Kostant and Wallach describe the action of the group $A$ on a Zariski open subset of regular semisimple elements defined by 
 $$
 \fglomega=\{x\in\fgl(n)|\;x_{i}\text { is regular semisimple}, \, \sigma(x_{i-1})\cap \sigma (x_{i})=\emptyset, \, 2\leq i\leq n\}.
$$
 
 Let $c_{i}\in\C^{i}$ and consider $c=(c_{1}, c_{2},\cdots, c_{n})\in\C^{1}\times\C^{2}\times\cdots\times\C^{n}=\C^{\frac{n(n+1)}{2}}$.  Regard $c_{i}=(z_{1},\cdots, z_{i})$ as the coefficients of the degree $i$ monic polynomial 
 \begin{equation}\label{eq:polyci}
 p_{c_{i}}(t)=z_{1}+z_{2} t+\cdots + z_{i} t^{i-1} +t^{i}.
 \end{equation}
 %Given $c=(c_{1}, c_{2},\cdots ,c_{i}, c_{i+1},\cdots , c_{n})\in\mathbb{C}^{\frac{n(n+1)}{2}}$ with $c_{i}\in\C^{i}$, we think of $c_{i}\in\C^{i}$ as representing the coefficients of a monic polynomial of degree $i$ (excluding the leading term).  
 
 Let $\Omega_{n}$ denote the Zariski open subset of $\C^{\frac{n(n+1)}{2}}$ given by the tuples $c$ such that $p_{c_{i}}(t)$ has distinct roots and $p_{c_{i}}(t)$ and $p_{c_{i+1}}(t)$ have no roots in common.  Clearly, $\fglomega=\bigcup_{c\in\Omega_{n}} \glfibre$.  The action of $A$ on $\fglomega$ is described in the following theorem. (Theorem \ref{thm:generics}).

\begin{thm} \label{thm:igenerics}
The elements of $\fglomega$ are strongly regular.  If $c\in\Omega_{n}$ then $\glfibre=\glsfibre$ is precisely one orbit under the action of the group $A$.  Moreover, $\glfibre$ is a homogeneous space for a free, algebraic action of the torus $(\mathbb{C}^{\times})^{{n\choose 2}}$. 
\end{thm}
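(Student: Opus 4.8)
The plan is to argue by induction on $n$, fibering $\glfibre$ over the analogous space for $n-1$ via the map $x\mapsto x_{n-1}$; the base case $n=1$ is trivial. For the inductive step I would set $c'=(c_{1},\dots,c_{n-1})$, note that $c'\in\Omega_{n-1}$, and invoke the inductive hypothesis: $\fgl(n-1)_{c'}=\fgl(n-1)^{sreg}_{c'}$ is a single $A$-orbit and a homogeneous space for a free algebraic action of $(\Ct)^{{n-1\choose 2}}$, built out of conjugations by block-diagonal matrices $\mathrm{diag}(h_{1},\dots,h_{k},1,\dots,1)$ in $GL(n-1)\hookrightarrow GL(n)$. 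Then I would analyze the surjection $x\mapsto x_{n-1}$ from $\glfibre$ onto $\fgl(n-1)_{c'}$. Fixing $y$ in the target, since $p_{c_{n-1}}(t)$ has distinct roots $\lambda_{1},\dots,\lambda_{n-1}$ the matrix $y$ is regular semisimple, and $x$ with $x_{n-1}=y$ has block form $x=\left(\begin{smallmatrix} y & u\\ v^{T} & a\end{smallmatrix}\right)$. By the Schur complement formula $\det(tI-x)=p_{c_{n-1}}(t)\bigl((t-a)-v^{T}(tI-y)^{-1}u\bigr)$; writing $v^{T}(tI-y)^{-1}u=\sum_{j}r_{j}/(t-\lambda_{j})$ with $r_{j}=v^{T}\pi_{j}u$ for $\pi_{j}$ the spectral projections of $y$, the requirement $\det(tI-x)=p_{c_{n}}(t)$ becomes, on evaluating at $t=\lambda_{j}$ and matching the top coefficient, one specific value of $a$ together with $r_{j}=-p_{c_{n}}(\lambda_{j})/p_{c_{n-1}}'(\lambda_{j})$ for each $j$. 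The crucial observation is that $p_{c_{n}}(\lambda_{j})\neq0$ since $p_{c_{n}}$ and $p_{c_{n-1}}$ are coprime --- precisely the content of $c\in\Omega_{n}$ beyond $c'\in\Omega_{n-1}$ --- so all $r_{j}$ are nonzero, and the fibre over $y$ is $\{(u,v):v^{T}\pi_{j}u=r_{j}\}$, a single simply transitive orbit of the torus $Z_{GL(n-1)}(y)^{\circ}\cong(\Ct)^{n-1}$ acting by conjugation; in particular it is irreducible and isomorphic to $(\Ct)^{n-1}$.

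Iterating the projection down to $\fgl(1)_{c^{(1)}}$, a point, would then exhibit $\glfibre$ as an iterated torus-torsor over a point, hence irreducible of dimension $\sum_{i=2}^{n}(i-1)={n\choose 2}$; fixing a base point one gets a transitive free algebraic action of a torus of rank ${n-1\choose 2}+(n-1)={n\choose 2}$, freeness being inherited from the inductive hypothesis plus the elementary fact that a diagonal element of $Z_{GL(n-1)}(y)^{\circ}$ fixing $\left(\begin{smallmatrix} y & u\\ v^{T}&a\end{smallmatrix}\right)$ with all $v^{T}\pi_{j}u\neq0$ is the identity. This would settle the torus assertion. To get $\glfibre=\glsfibre$ --- that \emph{every} element of the fibre, not merely a generic one, is strongly regular --- I would use that conjugation by $\mathrm{diag}(h_{1},\dots,h_{k},1,\dots,1)$ fixes every $f_{i,j}$ (it conjugates each $x_{i}$ by a diagonal matrix, preserving its characteristic polynomial) and is a Poisson automorphism of $\fgl(n)\simeq\fgl(n)^{*}$, hence carries each $\xi_{f_{i,j}}$ to itself and preserves strong regularity; since all the conjugations moving points around $\glfibre$ above are of this type and $\glfibre$ contains a strongly regular element by Theorem 2.3 of \cite{KW1}, every element of $\glfibre$ is strongly regular. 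Then $\glfibre=\glsfibre$ is irreducible, so Theorem 3.12 of \cite{KW1} identifies it with a single $A$-orbit, and since $\fglomega=\bigcup_{c\in\Omega_{n}}\glfibre$ the theorem follows.

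I expect the residue computation to be the main point: it is what simultaneously manufactures the torus and forces the entire fibre to be strongly regular. The one genuinely delicate bit will be threading base points consistently through the induction so that the level-by-level torus orbits assemble into a single free transitive action of $(\Ct)^{{n\choose 2}}$ rather than merely a filtration of torsors --- but this is bookkeeping, not a new idea. A secondary thing to be careful about is the converse direction of the residue argument, namely that the prescribed values of the $r_{j}$ and of $a$ are sufficient (not only necessary) for $\det(tI-x)=p_{c_{n}}(t)$; this is a short degree-plus-residues argument, the leading coefficient being exactly what the choice of $a$ controls.
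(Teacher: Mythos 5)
Your route — induct on $n$, fiber via $x\mapsto x_{n-1}$, and solve the characteristic-polynomial constraint by Schur complement and partial fractions — is organized differently from the paper's, which establishes this theorem (and its non-generic generalization) through the $\Gamman$ morphisms of Section~4 built bottom-up out of the solution varieties $\sol$ and the centralizers $Z_{i}$. Your residue computation $r_j=-p_{c_n}(\lambda_j)/p_{c_{n-1}}'(\lambda_j)$, the coprimality argument that each $r_j\neq 0$, and the identification of $\{(u,v):v^{T}\pi_ju=r_j\text{ for all }j\}$ with a simply transitive $(\Ct)^{n-1}$-orbit are all correct, and in fact mirror what the paper's solution-variety analysis specializes to in the regular semisimple case (compare (\ref{eq:disjoint}) and the discussion around (\ref{eq:ugvector})). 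Your presentation is more transparent for the generic case; the paper's is a uniform machine that also handles degenerate $c$.

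There is, however, a genuine flaw in your strong regularity step. You assert that the conjugations moving points around $\glfibre$ are by matrices of the form $\mathrm{diag}(h_1,\dots,h_k,1,\dots,1)$, which would globally fix every $f_{i,j}$ and hence every $\xi_{f_{i,j}}$. That is false: the conjugating elements lie in $Z_{GL(i)}(x_i)$, the centralizer of the generically non-diagonal cutoff $x_i$, not in the standard diagonal torus — compare the factor $\gamma(x)^{-1}\mathrm{diag}(z_2',z_3',1)\gamma(x)$ in (\ref{eq:3act}) — and for a fixed $g\in GL(i)$ the map $\Ad(g)$ does not fix $f_{j,k}$ as a global function for $j<i$, so the ``Poisson automorphism of the integrable system'' reasoning does not apply as stated. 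The conclusion is salvageable: one checks via Proposition~\ref{prop:sreg} that $\Ad(g)$ for $g\in Z_{GL(i)}(x_i)$ preserves condition~(a) trivially and condition~(b) at each level (at level $i$ because $g$ fixes $x_{i-1}$ and $x_i$; at level $i+1$ because $\Ad(g)$ stabilizes $\fz_{\fgl(i)}(x_i)$; at other levels by conjugation-invariance). But it is cleaner to bypass propagation entirely and verify Proposition~\ref{prop:sreg} for every $x\in\fglomega$ directly from the hypotheses: (a) holds since regular semisimple implies regular, and (b) follows from $\sigma(x_{i-1})\cap\sigma(x_i)=\emptyset$. Finally, your ``bookkeeping'' caveat about base points undersells one real ingredient: merging the fibrewise $(\Ct)^{n-1}$-action with the inductive $(\Ct)^{{n-1\choose 2}}$-action on the base into a single free transitive torus action requires the $(\Ct)^{n-1}$-torsor over $\fgl(n-1)_{c'}$ to be trivial; this holds because the inductive hypothesis makes the base itself a torus (so with trivial Picard group), and it is exactly what the explicit base points $x_{a_i}$ and matrices $g_{i,i+1}(x_{a_i})$ in the paper's $\Gamman$ construction take care of.
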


In section \ref{s:gamma}, we give a construction that describes an $A$-orbit in an arbitrary fibre $\glsfibre$ as the image of a certain morphism of a commutative, connected algebraic group into $\glsfibre$.  The construction in section \ref{s:gamma} gives a bijection between $A$-orbits in $\glsfibre$ and orbits of a product of connected, commutative algebraic groups acting freely on a fairly simple variety, but it does not enumerate the $A$-orbits in $\glsfibre$.   In section \ref{s:counting}, we use the construction developed in section \ref{s:gamma} and combinatorial data of the fibre $\glsfibre$ to give explicit descriptions of the $A$-orbits in $\glsfibre$.  The main result is Theorem \ref{thm:general}, which contrasts substantially with the generic case described in Theorem \ref{thm:igenerics}. 

\begin{thm}\label{thm:introgen}
 Let $c=(c_{1}, c_{2},\cdots, c_{n})\in\C^{1}\times\C^{2}\times\cdots\times\C^{n}=\C^{\frac{n(n+1)}{2}}$ be such that there are $0\leq j_{i}\leq i$ roots in common between the monic polynomials $p_{c_{i}}(t)$ and $p_{c_{i+1}}(t)$.  Then the number of $A$-orbits in $\glsfibre$ is exactly $2^{\sum_{i=1}^{n-1} j_{i}}.$  For $x\in\glsfibre$, let $Z_{i}$ denote the centralizer of the Jordan form of $x_{i}$ in $\fgl(i)$.  The orbits of $A$ on $\glsfibre$ are the orbits of a free algebraic action of the complex, commutative, connected algebraic group $Z=\Gprod$ on $\glsfibre$. 
\end{thm}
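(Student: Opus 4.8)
The plan is to build on the construction of section~\ref{s:gamma}, which exhibits each $A$-orbit in $\glsfibre$ as the image of a morphism $\Gamman$ from a commutative connected algebraic group and, through the solution varieties $\sol$ indexed by the consecutive pairs $(c_{i},c_{i+1})$, produces a bijection between the $A$-orbits in $\glsfibre$ and the orbits of a free action of a product of commutative connected groups. Granting that reduction, three things remain: (1)~to identify the acting product group with $Z=\Gprod$ and the free action with an action on $\glsfibre$ itself; (2)~to prove that the number of orbits equals $2^{\sum_{i=1}^{n-1}j_{i}}$; and (3)~to confirm that these orbits are exactly the $A$-orbits. I would carry this out by induction on $n$, deleting the last row and column at each stage.

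For the inductive step, fix $c'=(c_{1},\dots,c_{n-1})$ and consider the morphism $\pi\colon\glsfibre\to\fgl(n-1)^{sreg}_{c'}$ that deletes the last row and column. By \cite{KW1} it is surjective, it is equivariant for $Z_{1}\times\cdots\times Z_{n-2}$, and $Z_{n-1}$ preserves each of its fibres; using the inductive hypothesis --- in particular the freeness of $Z_{1}\times\cdots\times Z_{n-2}$ downstairs --- one checks that the $Z$-orbits lying over a fixed downstairs $(Z_{1}\times\cdots\times Z_{n-2})$-orbit are in bijection with the $Z_{n-1}$-orbits in a single fibre $\pi^{-1}(x_{n-1})$. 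Everything then comes down to the \textbf{local count}: for fixed strongly regular $x_{n-1}$, the fibre $\pi^{-1}(x_{n-1})$ is a disjoint union of exactly $2^{j_{n-1}}$ orbits of $Z_{n-1}$. Writing such a matrix as a bordering $\begin{pmatrix}x_{n-1}&w\\ v^{\mathrm{T}}&a\end{pmatrix}$ and using the decomposition $Z_{GL(n-1)}(x_{n-1})=\prod_{\lambda}\bigl(\C[t]/(t-\lambda)^{m(\lambda)}\bigr)^{\times}$ together with the matching splitting of $(w,v)$, the problem decouples over the eigenvalues $\lambda$ of $x_{n-1}$: when $\lambda$ is a root of only one of $p_{c_{n-1}}$ and $p_{c_{n}}$ the $\lambda$-datum is rigid modulo $Z_{n-1}$, while when $\lambda$ is a common root, strong regularity pins down the Jordan block of $x_{n}$ at $\lambda$ and leaves precisely the binary alternative of whether that block is attached ``above'' or ``below'' --- the two types ($L$ and $U$) whose orbit varieties $\sol$, and the nilpotent model $\nsol$, are analysed in section~\ref{s:counting}. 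Multiplying over the $j_{n-1}$ common roots and feeding in the inductive hypothesis gives $2^{j_{n-1}}\cdot 2^{\sum_{i=1}^{n-2}j_{i}}=2^{\sum_{i=1}^{n-1}j_{i}}$; the base case $n=1$ is immediate, and the special case $j_{i}\equiv 0$ recovers Theorem~\ref{thm:igenerics}.

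Freeness of the $Z$-action, and with it the identification asked for in (3), rests squarely on strong regularity. In the bordering picture $Z_{n-1}\cong\C[x_{n-1}]^{\times}$ acts by $(w,v)\mapsto(gw,(g^{\mathrm{T}})^{-1}v)$, so any stabilizing $g=p(x_{n-1})$ must satisfy $p(x_{n-1})w=w$ and $p(x_{n-1})^{\mathrm{T}}v=v$, and the characterization of strong regularity of $x_{n}$ in \cite{KW1} is exactly what forces $p(x_{n-1})=1$. Propagating this up the tower, $Z=\Gprod$ acts freely on $\glsfibre$, so every $Z$-orbit is a smooth, connected --- hence irreducible --- locally closed subvariety of dimension $\sum_{i=1}^{n-1}i=\binom{n}{2}$, and therefore sits inside a single irreducible component of $\glsfibre$. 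By Theorem~3.12 of \cite{KW1} those components are precisely the $A$-orbits, and they too have dimension $\binom{n}{2}$; hence each $Z$-orbit is open and dense in the $A$-orbit containing it. Since $Z$ is connected it stabilizes each component, so every $A$-orbit is a union of $Z$-orbits containing a dense one, and the complement of that dense orbit would be a lower-dimensional union of $\binom{n}{2}$-dimensional orbits and so is empty. Hence each $A$-orbit is a single $Z$-orbit, the two partitions of $\glsfibre$ coincide, and there are $2^{\sum_{i}j_{i}}$ of them.

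The hardest step will be the local count used in the second paragraph: showing that at a common root $\lambda$ the number of $Z_{n-1}$-orbits produced is exactly two --- not one (which would happen if the two types were $Z_{n-1}$-conjugate, or if one of them failed to be strongly regular) and not more (which would happen if the Jordan type of $x_{n}$ at $\lambda$ were not forced). This requires a hands-on normal form for the bordering of a single regular Jordan block, together with a check that the resulting $\{L,U\}$-datum is genuinely an $A$-invariant --- precisely the computation performed for $\sol$ and $\nsol$ in section~\ref{s:counting}. A secondary subtlety will be that the $Z$-action on $\glsfibre$ must be shown to be globally well defined and that the factors $Z_{i}$ commute as transformations of $\glsfibre$; this should follow from the commutativity and connectedness of each $Z_{i}$ (each $x_{i}$ being regular) together with the $A$-equivariance built into the construction of section~\ref{s:gamma}.
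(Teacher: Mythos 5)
Your proposal is correct and takes essentially the same approach as the paper: both rest on the Section~\ref{s:gamma} bijection between $A$-orbits in $\glsfibre$ and free $\Gprod$-orbits on $\Xi^{1}_{c_{1},c_{2}}\times\cdots\times\Xi^{n-1}_{c_{n-1},c_{n}}$, and both establish the count by decoupling the bordering problem over the distinct eigenvalues of $x_{i}$ so that each common root of $p_{c_{i}}$ and $p_{c_{i+1}}$ produces exactly the binary $L/U$ alternative of (\ref{eq:case1})--(\ref{eq:case2}), giving $2^{j_{i}}$ free $Z_{i}$-orbits per level. Your repackaging of the product count as an induction on $n$ via the last-row-and-column projection is cosmetic, and your identification of $Z$-orbits with $A$-orbits via a dimension argument and Theorem~3.12 of \cite{KW1} parallels the tangent-space computation in the paper's Theorem~\ref{thm:Aorb} and the gluing in Theorem~\ref{thm:sregact}.
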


\begin{rem}\label{r:bp}
After the results of this paper were established, a very interesting paper by Roger Bielawski and Victor Pidstrygach appeared in \cite{BP} proving similar results.  The arguments are completely different, and the proofs were formed independently.  In \cite{BP}, the authors define an action of $A$ on the space of rational maps of fixed degree from the Riemann sphere into the flag manifold for $GL(n+1)$ and use symplectic reduction to obtain results about the strongly regular set.  They also show that there are $2^{\sum_{i=1}^{n-1} j_{i}}$ $A$-orbits in $\glsfibre$, $c$ as in Theorem \ref{thm:introgen}.  Our work differs from that of \cite{BP} in that we explicitly list the $A$-orbits in $\glsfibre$ and obtain an algebraic action of $\Gprod$ on $\glsfibre$ whose orbits are the same as those of $A$.  In spite of the relation between these papers, we feel that our paper provides a different and more precise perspective on the problem and deserves a place in the literature.

%A very similar result can be found in recent work of Roger Bielawski and Victor Pidstrygach in [BP], which gives interesting geometric interpretations of the work in \cite{KW1} and [KW2].  In [BP], the authors define an action of a group of symplectomorphisms on a space of rational maps of fixed degree from the Riemann sphere into the flag manifold for $GL(n+1)$.   For rational maps of a certain degree, this group is isomorphic to $\Co^{\frac{(n+1)(n)}{2}}$, and the authors refer to it as the Gelfand-Zeitlin group.  They then use information about the orbit structure of this group to get information about the orbit structure of $A$ on $\fgl(n)$.   Bielawski and Pidstrygach also expand upon the work of Kostant and Wallach in a different direction by showing that an analogue of the group $A$ acts on any symplectic or Poisson manifold with a Hamiltonian $GL(n)$-action. 
% I am going to have to write more here about what they are actuallly doing 
\end{rem}

The nilfibre $\glnil=\Phi^{-1}(0)$ contains some of the most interesting structure in regards to the action of $A$.  The fibre $\glnil$ has been studied extensively by Lie theorists and numerical linear algebraists.  Parlett and Strang \cite{PS} have studied matrices in $\glnil$ and have obtained interesting results.  Ovsienko \cite{Ov} has also studied $\glnil$, and has shown that it is a complete intersection.  It turns out that the $A$-orbits in $\glsnil$ correspond to $2^{n-1}$ Borel subalgebras of $\fgl(n)$.  The main results are contained in Theorems \ref{thm:nil} and \ref{thm:nilradical}.  We combine them into one single statement here.

\begin{thm}
The nilfibre $\glsnil$ contains $2^{n-1}$ $A$-orbits.  For $x\in\glsnil$, let $\overline{A\cdot x}$ denote the Zariski (=Hausdorff) closure of $A\cdot x$.  Then $\overline{A\cdot x}$ is a nilradical of a Borel subalgebra in $\fgl(n)$ that contains the standard Cartan subalgebra of diagonal matrices.
\end{thm}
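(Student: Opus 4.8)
The plan is to combine the two sources of structure developed earlier: the bijective correspondence from Section~\ref{s:gamma} between $A$-orbits in $\glsfibre$ and orbits of the group $Z=\Gprod$, together with the counting result of Theorem~\ref{thm:introgen}. Specialized to $c=0$, every polynomial $p_{c_i}(t)=t^i$ has all roots equal to $0$, so the number of common roots between $p_{c_i}$ and $p_{c_{i+1}}$ is $j_i=i$; plugging into Theorem~\ref{thm:introgen} gives $2^{\sum_{i=1}^{n-1} i}$ orbits at first glance, which is \emph{not} $2^{n-1}$. So the first thing I must do is explain why the nilfibre is special: the generic count $2^{\sum j_i}$ counts orbits when the relevant centralizers $Z_i$ act freely and the combinatorial gluing data runs over a full product, but in the nilpotent case the Jordan form of $x_i$ for a strongly regular $x$ is forced (it must be a single Jordan block, since $x_i$ is regular nilpotent on the $i$-dimensional space), which collapses most of the binary choices. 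I would therefore first establish the structural lemma: \emph{if $x\in\glsnil$ then each $x_i$ is a regular nilpotent element of $\fgl(i)$}, using strong regularity plus the fact that $x_i$ has characteristic polynomial $t^i$ and the $\xifij$-orbit has maximal dimension. This cuts the effective combinatorial data down from a product indexed by roots to a product indexed by the $n-1$ "transition levels", giving the $2^{n-1}$ count directly from the Section~\ref{s:counting} machinery.

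Second, I would identify each orbit closure. Fix $x\in\glsnil$ and let $\mathcal O = A\cdot x$. Since $x$ and all its $x_i$ are regular nilpotent, one can conjugate (using only block-upper-triangular freedom that is compatible with the Gelfand--Zeitlin filtration $\fgl(1)\subset\cdots\subset\fgl(n)$) so that $x_i$ is in a fixed regular nilpotent normal form at each level. The key computation is that the tangent space to $\mathcal O$ at such a normalized $x$ is spanned by the Hamiltonian vector fields $\xifij(x)=[\,d f_{i,j}(x), x\,]$, and one checks that these span precisely the nilradical $\fn$ of a Borel $\fb\supseteq\ft$ (the diagonal Cartan): the point is that $df_{i,j}(x)=j\,(x_i^{\,j-1})^\vee$ lies in $\fgl(i)$, and bracketing the regular nilpotent against powers of itself sweeps out a half-dimensional abelian-by-nilpotent family. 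Because $\mathcal O$ has dimension ${n\choose 2}=\dim\fn$ and is an irreducible locally closed subvariety whose tangent space at every point is (a translate of) $\fn$, it is an open subset of the affine space $\fn$; its closure is then all of $\fn$. The choice of which Borel $\fb$ arises is exactly the binary datum at each level $i$ (whether the new eigendirection is appended "at the top" or "at the bottom" of the growing Jordan chain), which is precisely the $2^{n-1}$ bookkeeping from Section~\ref{s:counting}; so the $2^{n-1}$ orbit closures are the $2^{n-1}$ such nilradicals, each containing $\ft$.

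Third, I would verify that Zariski closure equals Hausdorff closure: $\mathcal O$ is a constructible set (image of a morphism of algebraic groups by the Section~\ref{s:gamma} construction), and a constructible set that is an open dense subset of an irreducible variety has the same closure in both topologies, so this is automatic once the previous step identifies $\overline{\mathcal O}$ with the irreducible variety $\fn$. Assembling: the structural lemma gives the count $2^{n-1}$ matching Theorem~\ref{thm:introgen} specialized to $c=0$; the tangent-space computation identifies each $\overline{A\cdot x}$ as $\fn(\fb)$ for a Borel $\fb\supseteq\ft$; and the closure remark upgrades this to the Hausdorff closure.

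The main obstacle, I expect, is the tangent-space/normal-form computation in the second step — specifically, showing that the span of $\{[(x_i^{j-1})^\vee,x]\}$ is \emph{exactly} a nilradical containing $\ft$ (not merely contained in one of the right dimension), and pinning down which Borel corresponds to which of the $2^{n-1}$ combinatorial choices. This requires a careful induction on $n$: assuming the result for $x_{n-1}\in\sfibregl(n-1)^{sreg}_0$, one analyzes how the last row and column of $x$ are constrained by strong regularity and how the extra Hamiltonian vector fields $\xi_{f_{n,j}}$ act, matching the "append at top vs.\ bottom" dichotomy to the inclusion $\fn(\fb_{n-1})\hookrightarrow\fn(\fb_n)$. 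Everything else is bookkeeping or follows from results already in the excerpt.
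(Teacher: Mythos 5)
Your opening paragraph rests on a misreading of $j_i$ in Theorem~\ref{thm:introgen}: there $j_i$ counts \emph{distinct} roots shared by $p_{c_i}(t)$ and $p_{c_{i+1}}(t)$, not roots with multiplicity. This is explicit in the proof of Theorem~\ref{thm:general}, where $j$ is bounded above by $r$, the number of distinct eigenvalues of the Jordan form $J$ at level $i$; the bound $j_i\le i$ merely records that a degree-$i$ polynomial has at most $i$ distinct roots. For $c=0$, $p_{c_i}(t)=t^i$ has the single root $0$, so $j_i=1$ for every $i$ and $2^{\sum j_i}=2^{n-1}$ falls out immediately. There is no discrepancy to reconcile, so the entire first step of your plan is spent resolving a phantom. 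Your ``structural lemma'' is true (it is just Proposition~\ref{prop:sreg}(a) combined with $x\in\glnil$), but it is not what produces the $2^{n-1}$, and ``collapsing most of the binary choices'' does not correspond to the machinery: there was only ever one binary choice per level, attached to the one distinct eigenvalue. The paper in fact proves the nilfibre count \emph{before} the general theorem: Proposition~\ref{prop:det2} gives the characteristic polynomial of a matrix in $\nsol$, the nilpotency equations (\ref{eq:nilcond}) yield exactly two free $Z_i$-orbits $\orbu$ and $\orbl$, and Theorem~\ref{thm:Aorb} with Remark~\ref{r:different} then gives $2^{n-1}$ directly, without invoking the general case.

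For the closure identification, your tangent-space route is headed somewhere workable but you defer the essential step and the sketch has a gap: computing the tangent space at one normalized point and finding it inside a nilradical $\fn$ does not put the orbit inside $\fn$, and the assertion that the tangent space is ``(a translate of) $\fn$'' at every point is exactly what would need proof. The paper's argument avoids tangent spaces entirely: each $A$-orbit in $\glsnil$ is $Im\Gamman$ with every $a_i\in\{U,L\}$, and the explicit form of $\Gamman$ (conjugations by $GL(i)$'s, with the new column or row at level $i+1$ zeroed according to $a_i$) shows at once that $Im\Gamman\subset\nilrad$, a ${n\choose 2}$-dimensional linear space of nilpotent matrices visibly stable under the diagonal torus. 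Since $A\cdot x$ is irreducible of dimension ${n\choose 2}$ (Theorem 3.12 of \cite{KW1}), $\overline{A\cdot x}=\nilrad$. Gerstenhaber's theorem is then what converts ``a ${n\choose 2}$-dimensional linear space of nilpotents'' into ``the nilradical of a Borel,'' and that citation (or some substitute) is absent from your proposal even though it is the decisive step. Your observation that Zariski and Hausdorff closure agree for constructible sets is correct and matches the paper.
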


The nilradicals obtained as closures of $A$-orbits in $\glsnil$ are described explicitly in Theorem \ref{thm:nilradical}.  We also describe the permutations that conjugate the strictly lower triangular matrices into each of these $2^{n-1}$ nilradicals in Theorem \ref{thm:perms}.  

Theorem \ref{thm:introgen} lets us identify exactly where the action of the group $A$ is transitive on $\glsfibre$.  (See Corollary \ref{c:generic} and Remark \ref{r:willsee}).  

\begin{cor}
The action of $A$ is transitive on $\glsfibre$ if and only if $p_{c_{i}}(t)$ and $p_{c_{i+1}}(t)$ are relatively prime for each $i$, $1\leq i\leq n-1$.  Moreover, for such $c\in \C^{\frac{n(n+1)}{2}}$ we have $\glfibre=\glsfibre$.
\end{cor}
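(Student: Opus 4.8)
The plan is to read the ``if and only if'' off Theorem~\ref{thm:introgen} and to prove the ``moreover'' clause by a direct argument from the characterization of strong regularity in \cite{KW1}. Throughout, write $j_{i}$ for the number of common roots of $\pci$ and $\pcp$, so that $j_{i}=0$ precisely when these two polynomials are relatively prime.

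First, the equivalence. Every fibre $\glsfibre$ is non-empty (Theorem~2.3 of \cite{KW1}), so $A$ acts transitively on $\glsfibre$ if and only if $\glsfibre$ is a single $A$-orbit. By Theorem~\ref{thm:introgen} the number of $A$-orbits in $\glsfibre$ is $2^{\sum_{i=1}^{n-1}j_{i}}$, and this equals $1$ if and only if $\sum_{i=1}^{n-1}j_{i}=0$; since each $j_{i}\ge 0$, this holds if and only if $j_{i}=0$ for every $i$, that is, if and only if $\pci$ and $\pcp$ are relatively prime for each $1\le i\le n-1$. This settles the main statement.

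For the ``moreover'' clause, assume the coprimality condition and fix an arbitrary $x\in\glfibre$; note this is equivalent to $\sigma(x_{i})\cap\sigma(x_{i+1})=\emptyset$ for $1\le i\le n-1$. I would use the criterion (see \cite{KW1}) that $x$ is strongly regular if and only if every $x_{i}$ is a regular element of $\fgl(i)$ and $\fz_{\fgl(i)}(x_{i})\cap\fz_{\fgl(i+1)}(x_{i+1})=0$ for all $i$, and verify both clauses. Regularity of the $x_{i}$ follows by induction on $i$: the matrix $x_{1}$ is $1\times 1$, and if $x_{i}$ is regular and $\lambda\in\sigma(x_{i+1})$ then $\lambda\notin\sigma(x_{i})$, so writing $x_{i+1}$ in block form with top-left block $x_{i}$ and reading off the first $i$ coordinates of a $\lambda$-eigenvector shows they are determined by the last coordinate; hence $\dim\ker(x_{i+1}-\lambda I)\le 1$ for every eigenvalue $\lambda$, so $x_{i+1}$ is regular. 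For the centralizer clause, fix $i$ and take $z\in\fz_{\fgl(i)}(x_{i})\cap\fz_{\fgl(i+1)}(x_{i+1})$; since $z$ commutes with the regular matrix $x_{i}$ we may write $z=q(x_{i})$ for a polynomial $q$, and commutation with $x_{i+1}$ forces $q(x_{i})u=0$, where $u\in\Co^{i}$ is the last column of $x_{i+1}$ truncated to its first $i$ entries. The key observation is then that $W:=\ker q(x_{i})$ is $x_{i}$-invariant and contains $u$, so $\widetilde W:=W\oplus\Co e_{i+1}$ is $x_{i+1}$-invariant, where $e_{i+1}$ is the last standard basis vector of $\Co^{i+1}$; the operator induced by $x_{i+1}$ on $\Co^{i+1}/\widetilde W\cong\Co^{i}/W$ coincides with the one induced by $x_{i}$, so if $W\ne\Co^{i}$ this produces a common eigenvalue of $x_{i}$ and $x_{i+1}$, contradicting $\sigma(x_{i})\cap\sigma(x_{i+1})=\emptyset$. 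Hence $q(x_{i})=0$, $z=0$, and $x$ is strongly regular; since $x\in\glfibre$ was arbitrary, $\glfibre=\glsfibre$.

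The equivalence is pure bookkeeping on Theorem~\ref{thm:introgen}, and the regularity step is routine; the only real content is the centralizer step, and within it the one idea worth isolating is the construction of the $x_{i+1}$-invariant subspace $\ker q(x_{i})\oplus\Co e_{i+1}$, from which the common-eigenvalue contradiction is immediate. (The equality $\glfibre=\glsfibre$ under coprimality can alternatively be extracted from the explicit orbit construction of Section~\ref{s:gamma}, but the self-contained argument above is shorter.)
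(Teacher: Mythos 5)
Your argument is correct, and for the ``moreover'' clause it takes a genuinely different route from the paper. The ``if and only if'' direction is handled exactly as in the paper (via Corollary~\ref{c:generic}, reading the orbit count $2^{\sum_i j_i}$ off Theorem~\ref{thm:general}). For the equality $\glfibre=\glsfibre$, however, the paper (in the theorem of Section~\ref{s:theta}) deduces strong regularity of every $x\in\glfibre$ by pushing it through the $\Gamman$-apparatus: it first shows that under coprimality each solution variety $\sol$ is a single free $Z_i$-orbit of regular elements, then uses Remark~\ref{r:stupid}, Proposition~\ref{r:sense}, and Theorems~\ref{thm:Aorb} and~\ref{thm:regorbits} to place $x$ inside $Im\Gamman\subset\glsfibre$. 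You instead verify the two clauses of Proposition~\ref{prop:sreg} directly by linear algebra. Your regularity step (an eigenvector of $x_{i+1}$ with eigenvalue $\lambda\notin\sigma(x_i)$ has its top $i$ coordinates determined by the last one, so every eigenspace is at most a line) is routine and sound, and the centralizer step is the genuinely nonroutine part: writing $z=q(x_i)$, observing that $W=\ker q(x_i)$ is $x_i$-invariant and contains the relevant column $u$ of $x_{i+1}$, so that $W\oplus\Co e_{i+1}$ is $x_{i+1}$-invariant, and then noting that a proper $W$ would force a common eigenvalue of $x_i$ and $x_{i+1}$ via the isomorphic induced quotient operators. This is correct. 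What you gain is a short, self-contained proof of $\glfibre=\glsfibre$ independent of the solution-variety and $\Gamman$ machinery; what the paper's route buys is integration with the surrounding structure (it simultaneously identifies $\glfibre$ as one $Im\Gamman$, which is useful elsewhere in Section~\ref{s:theta}).
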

This corollary allows us to identify the maximal subset of $\fgl(n)$ on which the action of $A$ is transitive on the fibres of the map $\Phi$ in (\ref{eq:imoment}) over this set.  The set $\fglomega$ is a proper open subset of this maximal set.  This is discussed in detail in section \ref{s:theta}.

\begin{ack}
This paper is based on the author's Ph.D. thesis \cite{Col}, written under the supervision of Nolan Wallach at the University of California, San Diego.  I would like to thank him for his patience and support as a thesis advisor.  

	I am also very grateful to Bertram Kostant with whom I had many fruitful discussions.  Our discussions helped me to further understand various aspects of Lie theory and helped me to form some of the results in my thesis.  I would also like to thank Sam Evens who taught me how to write a mathematical paper amongst many other things.
\end{ack}

\section{The Group $A$}\label{s:GZ}

We briefly discuss the construction of an analytic action of a group $A\simeq \C^{{n\choose 2}}$ on $\fgl(n)$ that appears in \cite{KW1} (see also \cite{Col2}).  %For a more in depth exposition of the material in \cite{KW1} see [Col].   

We view $\fgl(n)^{*}$ as a Poisson manifold with the Lie-Poisson structure (see \cite{Va}, \cite{CG}).  Recall that the Lie Poisson structure is the unique Poisson structure on the symmetric algebra $S(\fgl(n))=\C[\fgl(n)^{*}]$ such that if $x, \, y\in S^{1}(\fgl(n))$, then their Poisson bracket $\{ x, y\} =[x, y]$ is their Lie bracket.  We use the trace form to transfer the Poisson structure from $\fgl(n)^{*}$ to $\fgl(n)$.  For $i\leq n$, we can view $\fgl(i)\hookrightarrow \fgl(n)$ as a subalgebra, simply by embedding an $i\times i$ matrix in the top left-hand corner of an $n\times n$ matrix.
\begin{equation}\label{eq:subalg}
Y\hookrightarrow \left[\begin{array}{cc}
Y & 0\\
0 & 0\end{array}\right].
\end{equation}
We also have a corresponding embedding of the adjoint groups $GL(i)\hookrightarrow GL(n)$
$$
g\hookrightarrow \left [\begin{array}{cc} g & 0\\
0 & Id_{n-i}\end{array}\right ].
$$
For the purposes of this paper, we always think of $\fgl(i)\hookrightarrow\fgl(n)$ and $GL(i)\hookrightarrow GL(n)$ via these embeddings, unless otherwise stated.

We can use the embedding (\ref{eq:subalg}) to realize $\fgl(i)$ as a summand of $\fgl(n)$.  Indeed, we have
\begin{equation}\label{eq:perp}
\fgl(n)=\fgl(i)\oplus\fgl(i)^{\perp},
\end{equation}
where $\fgl(i)^{\perp}$ denotes the orthogonal complement of $\fgl(i)$ in $\fgl(n)$ with respect to the trace form.   It is convenient for us to have a coordinate description of this decomposition.  We make the following definition.

\begin{dfn}\label{def:cutoff}
For $x\in\fgl(n)$, we let $x_{i}\in\fgl(i)$ be the top left-hand corner of $x$, i.e. $(x_{i})_{k,l}=x_{k,l}$ for $1\leq k, l\leq i$.  We refer to $x_{i}$ as the $i\times i$ cutoff of $x$.\end{dfn}
Given a $y\in\fgl(n)$ its decomposition in (\ref{eq:perp}) is written $y=y_{i}\oplus y_{i}^{\perp}$ where $y_{i}^{\perp}$ denotes the entries $y_{k,l}$ where $k, l$ are not both in the set $\{1, \cdots, i\}$.  Using the decomposition in (\ref{eq:perp}), we can think of the polynomials on $\fgl(i)$, $P(\fgl(i))$, as a Poisson subalgebra of the polynomials on $\fgl(n)$, $P(\fgl(n))$.  Explicitly, if $f\in P(\fgl(i))$, (\ref{eq:perp}) gives $f(x)=f(x_{i})$ for $x\in\fgl(n)$.  The Poisson structure on $P(\fgl(i))$ inherited from $P(\fgl(n))$ agrees with the Lie-Poisson structure on $P(\fgl(i))$ (see \cite{KW1}, pg. 330).  

Since $\fgl(n)$ is a Poisson manifold, we have the notion of a Hamiltonian vector field $\xi_{f}$ for any holomorphic function $f\in\mathcal{O}(\fgl(n))$.  If $g\in\mathcal{O}(\fgl(n))$, then $\xi_{f}(g)=\{f, g\}$.  The group $A$ is defined as the simply connected, complex Lie group that corresponds to a certain Lie algebra of Hamiltonian vector fields on $\fgl(n)$.  %The Lie algebra of vector fields is defined using a classical analogue of the Gelfand-Zeitlin algebra of the universal enveloping algebra $U(\fgl(n))$ inside $P(\fgl(n))$.  
To define this Lie algebra of vector fields, we consider the subalgebra of $P(\fgl(n))$ generated by the adjoint invariant polynomials for each of the subalgebras $\fgl(i)$, $1\leq i\leq n$.
\begin{equation}\label{eq:GZ}
J(\fgl(n))=P(\fgl(1))^{GL(1)}\otimes\cdots\otimes P(\fgl(n))^{GL(n)}.
\end{equation}
This algebra may be viewed as a classical analogue of the Gelfand-Zeitlin subalgebra of the universal enveloping algebra $U(\fgl(n))$ (see \cite{DFO}).  
As $P(\fgl(i))^{GL(i)}$ is in the Poisson centre of $P(\fgl(i))$, it is easy to see that $J(\fgl(n))$ is Poisson commutative.  (See Proposition 2.1 in \cite{KW1}.)  Let $f_{i,1}, \cdots, f_{i, i}$ generate the ring $P(\fgl(i))^{GL(i)}$.  Then $J(\fgl(n))$ is generated by $\{f_{i,1}, \cdots, f_{i, i}| 1\leq i\leq n\}$.  Note that the sum
$$
\sum_{i=1}^{n-1} i =\frac{n(n-1)}{2}={n\choose 2}
$$
is half the dimension of a regular adjoint orbit in $\fgl(n)$.  We will see shortly that the functions $\{f_{i,1}, \cdots, f_{i, i}| 1\leq i\leq n-1\}$ form a completely integrable system on a regular adjoint orbit.  

The surprising fact about this integrable system proven by Kostant and Wallach in \cite{KW1} is that the corresponding Hamiltonian vector fields $\{\xifij |\;1\leq j\leq i$, $1\leq i\leq n-1\}$ are complete (see Theorem 3.5 in \cite{KW1}).  Let $f_{i,j}=tr(x_{i}^{j})$ and let $\fa=\{\xifij |\;1\leq j\leq i$, $1\leq i\leq n-1\}$.  We define $A$ as the simply connected, complex Lie group corresponding to the Lie algebra $\fa$.  Since the vector fields $\xifij$ commute for all $i$ and $j$, the corresponding (global) flows define a global action of $\C^{{n\choose 2}}$ on $\fgl(n)$.  $A\simeq\C^{{n\choose 2}}$, and it acts on $\fgl(n)$ by composing these flows in any order.  The action of $A$ also preserves adjoint orbits.  (See \cite{KW1}, Theorems 3.3, 3.4.)  %We refer to the group $A$ as the Gelfand-Zeitlin group (as in [BP]).
  %For our purposes, we can simply think of $A$ as $\C^{{n\choose 2}}$ 

%.  It is convenient to take $f_{i,j}(x)=tr(x_{i}^{j})$. The group  $A$ is defined as the simply connected, complex Lie group that corresponds to the commutative Lie algebra of vector fields $\fa=\{ \xifij | 1\leq j\leq i,\, 1\leq i\leq n-1\}$ with $f_{i,j}=tr(x_{i}^{j})$.  
%To fix a specific action of $\C^{{n\choose 2}}$ on $\fgl(n)$, we make a specific choice of generators $f_{i,j}$ for $J(\fgl(n))$.
%It is this action of $\C^{{n\choose 2}}$ with these generators for $J(\fgl(n))$ that defines the action of the group $A$ on $\fgl(n)$  in \cite{KW1}.  
%\begin{rem}
%In \cite{KW1} $A$ .  
%\end{rem}

The action of $A\simeq\C^{{n\choose 2}}$ may seem at first glance to be non-canonical as choices are involved in its definition.  However, one can show that the orbit structure of $\C^{{n\choose 2}}$ given by integrating the complete vector fields $\xifij$ is independent of the choice of generators $f_{i,j}$ for $P(\fgl(i))^{GL(i)}$. (See Theorem 3.5 in \cite{KW1}.)  Since we are interested in studying the geometry of these orbits, we lose no information by fixing a choice of generators.  
\begin{rem}\label{r:ortho}
Using the Gelfand-Zeitlin algebra for complex orthogonal Lie algebras $\mathfrak{so}(n)$, we can define an analogous group, $\Co^{d}$ where $d$ is half the dimension of a regular adjoint orbit in $\mathfrak{so}(n)$.  The construction of the group and the study of its orbit structure on certain regular semisimple elements of $\mathfrak{so}(n)$ is discussed in detail in \cite{Col2}.
\end{rem}

For our choice of generators, we can write down the Hamiltonian vector fields $\xifij$ in coordinates and their corresponding global flows.  To do this, we use the following notation.  Given $x,\, z\in\fgl(n)$, we denote the directional derivative in the direction of $z$ evaluated at $x$ by $\partial_{x}^{z}$.  Its action on function on a holomorphic function $f$ is
\begin{equation}\label{eq:deriv}
\partial_{x}^{z} f=\frac{d}{dt} |_{t=0} f(x+tz).
\end{equation}
By Theorem 2.12 in \cite{KW1}
\begin{equation}\label{eq:coords}
 (\xi_{f_{i,j}})_{x}=\partial_{x}^{[  -jx_{i}^{j-1},\,x]}.
\end{equation}
  We see that $\xi_{f_{i,j}}$ integrates to an action of $\mathbb{C}$ on $\fgl(n)$ given by
 \begin{equation}\label{eq:flows}
 \Ad (\exp(tjx_{i}^{j-1}))\cdot x
\end{equation}
for $t\in\mathbb{C}$, where $x_{i}^{0}=Id_{i}\in\fgl(i)$. 

\begin{rem}\label{r:Aorb}
The orbits of $A$ are the composition of the (commuting) flows in (\ref{eq:flows}) for $1\leq i\leq n-1$, $1\leq j\leq i$ in any order acting on $x\in\fgl (n)$.  It is easy to see using (\ref{eq:flows}) that the action of $A$ stabilizes adjoint orbits.    
\end{rem}

Equation (\ref{eq:coords}) gives us a convenient description of the tangent space to the action of $A$ on $\fgl(n)$.  We first need some notation.  If $x\in\fgl(n)$, let $Z_{x_{i}}$ be the associative subalgebra of $\fgl(i)$ generated by the elements $Id_{i}, x_{i}, x_{i}^{2},\cdots, x_{i}^{i-1}$.  We then let $Z_{x}=\sum_{i=1}^{n} Z_{x_{i}}$.  Let $x\in\fgl(n)$ and let $A\cdot x$ denote its $A$-orbit.  Then equation (\ref{eq:coords}) gives us 
$$
T_{x}(A\cdot x)=span \{ (\xifij)_{x} | 1\leq i\leq n-1,\, 1\leq j\leq i\}=span\{ \partial_{x}^{[z,x]} | z\in Z_{x}\}.
$$

Following the notation in \cite{KW1}, we denote 
\begin{equation}\label{eq:dist}
V_{x}:=span\{ \partial_{x}^{[z,x]} | z\in Z_{x}\}=T_{x}(A\cdot x)\subset T_{x}(\fgl(n)).
\end{equation}

Our work focuses on orbits of $A$ of maximal dimension ${n\choose 2}$, as such orbits form Lagrangian submanifolds of regular adjoint orbits.  (If such orbits exist, they are the leaves of a maximal dimension of the Gelfand-Zeitlin integrable system.)  Accordingly, we make the following theorem-definition. (See Theorem 2.7 and Remark 2.8 in \cite{KW1}).  

\begin{dfn-thm}\label{d:sreg}
$x\in\fgl(n)$ is called strongly regular if and only if the differentials $\{(df_{i,j})_{x} | 1\leq i\leq n, 1\leq i \leq i\}$ are linearly independent at $x$.  Equivalently, $x$ is strongly regular if the $A$-orbit of $x$, $A\cdot x$ has $\dim(A\cdot x)={n\choose 2}$.   We denote the set of strongly regular elements of $\fgl(n)$ by $\fgl(n)^{sreg}$. 
\end{dfn-thm}

The goal of the paper is to determine the $A$-orbit structure of $\fgl(n)^{sreg}$.  In \cite{KW1}, Kostant and Wallach produce strongly regular elements using the map $\Phi: \fgl(n)\to \C^{\frac{n(n+1)}{2}}$,

%The immediate question is whether such elements exist and if so what their $A$-orbit structure is.  Most of this paper is dedicated to answering the second question.  The first is answered in \cite{KW1}.  In that reference the authors produce a canonical set of strongly regular elements.  They accomplish this by studying the moment map for the Gelfand-Zeitlin integrable system.  The moment map is given as follows, 

\begin{equation}\label{eq:map}
\Phi(x)=(p_{1,1}(x_{1}), p_{2,1}(x_{2}),\cdots, p_{n,n}(x)), 
\end{equation}
where $p_{i,j}(x_{i})$ is the coefficient of $t^{j-1}$ in the characteristic polynomial of $x_{i}$.

One of the major results in \cite{KW1} is the following theorem concerning $\Phi$. (See Theorem 2.3 in \cite{KW1}.)
\begin{thm}\label{thm:Hess}
Let $\fb\subset\fgl(n)$ denote the standard Borel subalgebra of upper triangular matrices in $\fgl(n)$.  Let $f$ be the sum of the negative simple root vectors.  Then the restriction of $\Phi$ to the affine variety $f+\fb$ is an algebraic isomorphism.  %Thus, 
%$$
%f+\fb \subset\fgl(n)^{sreg}.
%$$
\end{thm}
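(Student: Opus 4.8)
The plan is to show that $\Phi|_{f+\fb}$ is a bijective morphism of affine varieties and then upgrade to an isomorphism, either by checking the differential is everywhere invertible (so it is étale, hence an open immersion, hence an isomorphism onto its image), or by exhibiting an explicit polynomial inverse. Since both $f+\fb$ and $\C^{n(n+1)/2}$ are smooth irreducible affine varieties of the same dimension $\binom{n+1}{2}$ (the dimension of $f+\fb$ is $\dim\fb = \binom{n+1}{2}$), it suffices to prove bijectivity together with injectivity of the differential at every point; Zariski's main theorem (or the fact that a bijective étale morphism between smooth varieties over $\C$ is an isomorphism) then finishes the argument. The cleanest route, and the one I would carry out, is the direct one: produce an explicit recursive inverse.

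First I would set up coordinates. Write $x = f + b$ with $b$ upper triangular; then $x_i = f_i + b_i$ where $f_i$ is the sum of the first $i-1$ negative simple root vectors (the $i\times i$ cutoff) and $b_i$ is an arbitrary $i\times i$ upper triangular matrix. The key structural observation is that $x_i$ is, up to the single unknown diagonal/upper-triangular block, a companion-type matrix: $f_i$ has $1$'s on the subdiagonal, so $x_i$ is a lower Hessenberg matrix with nonzero subdiagonal entries. I would then proceed by induction on $i$, showing that given $x_{i-1} = (f+b)_{i-1}$ already determined, the components $p_{i,1}(x_i), \dots, p_{i,i}(x_i)$ of $\Phi$ — i.e.\ the coefficients of the characteristic polynomial of $x_i$ — determine the $i$-th row and column of $b$ uniquely and polynomially. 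Concretely, expanding $\det(tI - x_i)$ along the last row and column using the Hessenberg structure, the new entries $x_{1,i}, \dots, x_{i,i}$ of the border enter the characteristic polynomial coefficients in a triangular-in-degree fashion (the entry $x_{i,i}$ appears in the coefficient of $t^{i-1}$ as $\operatorname{tr}(x_{i-1}) + x_{i,i}$, the next entry along the superdiagonal of the border appears linearly in the next coefficient, etc.), because the subdiagonal $1$'s of $f_i$ force each border entry to contribute with a unit coefficient to a distinct characteristic-polynomial coefficient. This gives a unipotent-triangular linear system for the new border entries in terms of the prescribed data $c_{i}$ and the (already known) lower cutoff, hence a unique polynomial solution.

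Running this induction from $i=1$ to $i=n$ constructs a morphism $\Psi: \C^{n(n+1)/2} \to f+\fb$ with $\Phi\circ\Psi = \mathrm{id}$ and $\Psi\circ\Phi = \mathrm{id}$ by construction, proving $\Phi|_{f+\fb}$ is an algebraic isomorphism. The main obstacle is the bookkeeping in the inductive step: one must verify carefully that the Hessenberg/companion structure really does make the dependence of the $i$-th batch of characteristic-polynomial coefficients on the $i$-th border of $b$ triangular and unipotent, so that the step is invertible over $\C$ (indeed over $\Z$); this is where a clean choice of how to expand $\det(tI - x_i)$ — for instance, a cofactor expansion exploiting that deleting the last row and first column of $tI - x_i$ leaves an upper-triangular matrix with the subdiagonal $1$'s on its diagonal — pays off. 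Once that combinatorial linear-algebra fact is nailed down, everything else is formal, and in particular the étale/dimension-count alternative becomes unnecessary.
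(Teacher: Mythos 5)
Your proposal is correct and is essentially the standard argument; the paper itself does not prove this statement but cites Theorem 2.3 of \cite{KW1}, whose proof proceeds by exactly the recursion you describe. Concretely, expanding $\det(tI - x_i)$ via the last row/column (or the Schur complement formula, as the paper later does for Proposition \ref{prop:det2}) gives
\[
p_i(t) = (t - a_{ii})\,p_{i-1}(t) \;-\; \sum_{j=1}^{i-1} a_{j,i}\,p_{j-1}(t),
\]
where $p_j(t)=\det(tI - x_j)$, so that the new border entries $(a_{i,i},a_{i-1,i},\ldots,a_{1,i})$ are the coordinates of the degree-$(i-1)$ polynomial $t\,p_{i-1}(t)-p_i(t)$ in the basis $\{p_{i-1},p_{i-2},\ldots,p_0\}$; since each $p_j$ is monic of degree $j$ this change of basis is unipotent upper triangular, giving the polynomial inverse directly and making the \'etale/ZMT alternative unnecessary, just as you say. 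One cosmetic slip: $f+\fb$ consists of \emph{upper} Hessenberg matrices (zeros below the subdiagonal), not lower Hessenberg, and the trace term should carry a minus sign, but neither affects the argument.
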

We will refer to the elements of $f+\fb$ as Hessenberg matrices.  
They are matrices of the form
$$
f+\fb=\left [\begin{array}{ccccc}
a_{11} & a_{12} &\cdots & a_{1n-1} & a_{1n}\\
1 & a_{22} &\cdots & a_{2n-1} & a_{2n}\\
0 & 1 & \cdots & a_{3n-1}& a_{3n}\\
\vdots &\vdots &\ddots &\vdots &\vdots\\
0 & 0 &\cdots & 1 &a_{nn}\end{array}\right ]_{\mbox{\large .}}
$$

Note that Theorem \ref{thm:Hess} implies that if $x\in f+\fb$, then the differentials $\{(dp_{i,j})_{x} | 1\leq i\leq n, \,1\leq j\leq i\}$ are linearly independent.  
The sets of functions $ \{f_{i,j} | 1\leq i\leq n, \,1\leq j\leq i\}$ and $ \{p_{i,j} | 1\leq i\leq n, \,1\leq j\leq i\}$ both generate the classical analogue of the Gelfand-Zeitlin algebra $J(\fgl(n))$ (see (\ref{eq:GZ})).  It follows that for any $x\in\fgl(n)$, $span\{(df_{i,j})_{x} | 1\leq i\leq n, \,1\leq j\leq i\}=span\{(dp_{i,j})_{x} | 1\leq i\leq n, \,1\leq j\leq i\}$ by the Leibniz rule.  Theorem \ref{thm:Hess} then implies
$$
f+\fb \subset\fgl(n)^{sreg} \text{ and therefore } \fgl(n)^{sreg} \text{ is a non-empty Zariski open subset of } \fgl(n).
 $$  
 Thus, the functions 
 $$
 \{f_{i,j} | 1\leq i\leq n, \,1\leq j\leq i\}\text{  are algebraically independent.  }
 $$

For $c=(c_{1}, c_{2},\cdots, c_{n})\in\Co\times\Co^{2}\times\cdots\times \Co^{n}=\C^{\frac{n(n+1)}{2}}$ we denote the fibre $\Phi^{-1}(c)=\fgl(n)_{c}$, $\Phi$ as in (\ref{eq:map}).  For $c_{i}\in\Co^{i}$, we define a monic polynomial $p_{c_{i}}(t)$ with coefficients given by $c_{i}$ as in (\ref{eq:polyci}).   $x\in\glfibre$ if and only if $x_{i}$ has characteristic polynomial $p_{c_{i}}(t)$ for all $i$ .  Theorem \ref{thm:Hess} says that for any $c\in\C^{\frac{n(n+1)}{2}}$, $\fgl(n)_{c}$ is non-empty and contains a unique Hessenberg matrix.  %This says that if we are given any sequence of ${n+1\choose 2}$ complex numbers, then there exists a matrix $x\in\fgl(n)$ with those numbers as eigenvalues of the cutoffs $x_{i}$ for $1\leq i \leq n$, and moreover that matrix is unique if we insist that it is upper Hessenberg.   The amazing fact is that there is no compatibility condition between the eigenvalues of neighbouring cutoffs.   Compare this with interlacing condition that is required in Hermitian case (see [HJ, pg 185]).
We denote the strongly regular elements of the fibre $\fgl(n)_{c}$, by $\glsfibre$ that is
$$
\glsfibre=\fgl(n)_{c}\cap\fgl(n)^{sreg}.
$$
Since Hessenberg matrices are strongly regular, we get 
$$
\glsfibre\text{ is a non-empty Zariski open subset of } \glfibre
$$
for any $c\in\C^{\frac{n(n+1)}{2}}$.  

Theorem \ref{thm:Hess} implies that every regular adjoint orbit contains strongly regular elements.  This follows from the fact that a regular adjoint orbit contains a companion matrix, which is Hessenberg.  We can then use $A$-orbits of dimension ${n\choose 2}$ to construct polarizations of dense, open submanifolds of regular adjoint orbits.  Hence, the Gelfand-Zeitlin system is completely integrable on each regular adjoint orbit (Theorem 3.36 in \cite{KW1}).

Our goal is to give a complete description of the $A$-orbit structure of $\fgl(n)^{sreg}$.  It follows from the Poisson commutativity of the algebra $J(\fgl(n))$ in (\ref{eq:GZ}) that the fibres $\glfibre$ are $A$-stable.  Whence, the fibres $\glsfibre$ are $A$-stable.   Moreover, Theorem 3.12 in \cite{KW1} implies that the $A$-orbits in $\fgl(n)^{sreg}$ are the irreducible components of the fibres $\glsfibre$.  From this it follows that
$$
\text{ there are only finitely many $A$-orbits in the fibre $\glsfibre$ .}
$$

In this paper, we describe the $A$-orbit structure of an arbitrary fibre $\glsfibre$ and count the exact number of $A$-orbits in the fibre.  This gives a complete description of the $A$-orbit structure of $\fgl(n)^{sreg}$.  

\begin{rem}
Note that the collection of fibres of the map $\Phi$ is the same as the collection of fibres of the moment map for the $A$-action $x\to (f_{1,1}(x_{1}), f_{2,1}(x_{2}),\cdots, f_{n,n}(x))$.  Thus, studying the action of $A$ on the fibres of $\Phi$ is essentially studying the action of $A$ on the fibres of the corresponding moment map.  We use the map $\Phi$ instead of the moment map, since it is easier to describe the fibres of $\Phi$.  
\end{rem}

For our purposes, it is convenient to have a more concrete characterization of strongly regular elements. (See Theorem 2.14 in \cite{KW1}.) 
\begin{prop}\label{prop:sreg}
Let $x\in\fgl(n)$ and let $\fz_{\fgl(i)}(x_{i})$ denote the centralizer in $\fgl(i)$ of $x_{i}$.  Then $x$ is strongly regular if and only if the following two conditions hold.\\
(a) $x_{i}\in\fgl(i)$ is regular for all $i$, $1\leq i\leq n$.  \\
(b) $\fz_{\fgl(i-1)}(x_{i-1})\cap \fz_{\fgl(i)}(x_{i})=0$ for all $2\leq i\leq n$.
\end{prop}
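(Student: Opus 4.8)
The plan is to reduce strong regularity to a single statement in linear algebra about the subspace $Z_x=\sum_{i=1}^{n}Z_{x_i}$ of $\fgl(n)$, where (as in the excerpt) $Z_{x_i}\subseteq\fgl(i)\subseteq\fgl(n)$ is the associative algebra generated by $\mathrm{Id}_i,x_i,\dots,x_i^{i-1}$, so that $Z_{x_i}=\C[x_i]$. By Theorem-Definition \ref{d:sreg}, $x$ is strongly regular exactly when the $\tbinom{n+1}{2}$ differentials $(df_{i,j})_x$ ($1\le i\le n$, $1\le j\le i$) are linearly independent, equivalently span a subspace of $\fgl(n)^{*}$ of dimension $\tbinom{n+1}{2}$. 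Taking $f_{i,j}=\mathrm{tr}(x_i^{j})$ and identifying $\fgl(n)^{*}\cong\fgl(n)$ via the trace form, a direct computation from (\ref{eq:deriv}) shows that $(df_{i,j})_x$ corresponds to $j\,x_i^{j-1}$, the latter regarded inside $\fgl(n)$ via the embedding (\ref{eq:subalg}). Hence $\mathrm{span}\{(df_{i,j})_x:1\le j\le i\}=Z_{x_i}$, and therefore $\mathrm{span}\{(df_{i,j})_x:\text{all }i,j\}=\sum_{i=1}^{n}Z_{x_i}=Z_x$. (The same span is obtained from the $p_{i,j}$, as already recorded in the excerpt.) Thus the proposition becomes: $\dim Z_x=\tbinom{n+1}{2}$ if and only if (a) and (b) hold.

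Next I would invoke two elementary facts. First, $\dim Z_{x_i}=\deg(\text{minimal polynomial of }x_i)\le i$, with equality if and only if $x_i$ is nonderogatory, i.e.\ regular in $\fgl(i)$; and when $x_i$ is regular, $Z_{x_i}=\fz_{\fgl(i)}(x_i)$. Second, for a finite family of subspaces $\dim\big(\sum_i Z_{x_i}\big)\le\sum_i\dim Z_{x_i}$, with equality if and only if the sum is direct. Since $\sum_{i=1}^{n}\dim Z_{x_i}\le\sum_{i=1}^{n}i=\tbinom{n+1}{2}$, the equality $\dim Z_x=\tbinom{n+1}{2}$ forces $\dim Z_{x_i}=i$ for every $i$ (this is exactly condition (a)) and forces the sum $Z_x=\sum_{i=1}^{n}Z_{x_i}$ to be direct; conversely those two facts together give $\dim Z_x=\tbinom{n+1}{2}$. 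Moreover, granted (a), one has $Z_{x_{i-1}}=\fz_{\fgl(i-1)}(x_{i-1})$ and $Z_{x_i}=\fz_{\fgl(i)}(x_i)$, so condition (b) reads precisely $Z_{x_{i-1}}\cap Z_{x_i}=0$ for $2\le i\le n$. It therefore remains to prove, assuming (a): the sum $\sum_{i=1}^{n}Z_{x_i}$ is direct if and only if $Z_{x_{i-1}}\cap Z_{x_i}=0$ for all $2\le i\le n$. One direction is immediate.

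For the converse, which I expect to be the heart of the argument, I would induct on $k$ to show that $\sum_{i=1}^{k}Z_{x_i}$ is direct; the case $k=1$ is trivial, and the inductive step reduces to checking $\big(\sum_{i=1}^{k-1}Z_{x_i}\big)\cap Z_{x_k}=0$. The key point is the inclusion $\big(\sum_{i=1}^{k-1}Z_{x_i}\big)\cap Z_{x_k}\subseteq Z_{x_{k-1}}$. To see it, let $w$ lie in this intersection. On the one hand $w$ is a sum of elements of the $\fgl(i)$ with $i\le k-1$, hence $w\in\fgl(k-1)$; in the $(k-1)+1$ block decomposition of $\fgl(k)$ this means $w=\left(\begin{smallmatrix}w' & 0\\ 0 & 0\end{smallmatrix}\right)$ with $w'=w_{k-1}$. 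On the other hand $w\in Z_{x_k}=\C[x_k]$ commutes with $x_k=\left(\begin{smallmatrix}x_{k-1} & b\\ c^{t} & d\end{smallmatrix}\right)$; equating the top-left blocks of $wx_k$ and $x_kw$ gives $w'x_{k-1}=x_{k-1}w'$, so $w'\in\fz_{\fgl(k-1)}(x_{k-1})=Z_{x_{k-1}}$ by (a), i.e.\ $w\in Z_{x_{k-1}}$. Combined with $w\in Z_{x_k}$ and the hypothesis $Z_{x_{k-1}}\cap Z_{x_k}=0$ this forces $w=0$, completing the induction. Then $Z_x=\bigoplus_{i=1}^{n}Z_{x_i}$ with each summand of dimension $i$, so $\dim Z_x=\tbinom{n+1}{2}$ and $x$ is strongly regular. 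The only genuinely delicate step is this block computation together with the correct bookkeeping for the induction; everything else is standard linear algebra and the change-of-generators remark already in the text.
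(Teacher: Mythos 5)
Your proof is correct. A small point first: the paper does not prove Proposition~\ref{prop:sreg} itself; it is quoted as Theorem 2.14 of \cite{KW1}. So there is no in-text proof to compare against, but your argument is the natural one and, to my recollection, is in the same spirit as the one in \cite{KW1}: identify $\mathrm{span}\{(df_{i,j})_x\}$ with $Z_x=\sum_i Z_{x_i}$ via the trace form, observe that $\dim Z_{x_i}\le i$ with equality iff $x_i$ is regular (in which case $Z_{x_i}=\fz_{\fgl(i)}(x_i)$), and then show that, granted (a), directness of the whole sum is equivalent to the pairwise adjacent conditions $Z_{x_{i-1}}\cap Z_{x_i}=0$. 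The telescoping inclusion $\bigl(\sum_{i<k}Z_{x_i}\bigr)\cap Z_{x_k}\subseteq Z_{x_{k-1}}$, proved by the $(k-1)+1$ block computation (an element of $Z_{x_k}$ supported in $\fgl(k-1)$ must have its top-left block commute with $x_{k-1}$), is indeed the crux, and your derivation of it is correct. Every other step is routine; in particular the reduction from linear independence of the $\binom{n+1}{2}$ differentials to $\dim Z_x=\binom{n+1}{2}$ together with each $\dim Z_{x_i}=i$ and directness of the sum is handled correctly.
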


%Kostant and Wallach in \cite{KW1} describe the $A$-orbit structure for a certain generic subset of regular semisimple elements of $\fgl(n)$, which we describe below in the next section.  %We now study the generic matrices in further detail.  

\section{The action of $A$ on generic matrices}\label{s:generics}
 For $x\in\fgl(i)$, let $\sigma(x)$ denote the spectrum of $x$, where $x$ is viewed as an element of $\fgl(i)$.  We consider the following Zariski open subset of regular semisimple elements of $\fgl(n)$
\begin{equation}
\fglomega=\{x\in\fgl(n)|\;x_{i}\text { is regular semisimple}, \, \sigma(x_{i-1})\cap \sigma (x_{i})=\emptyset, \, 2\leq i\leq n\}.
\end{equation}
Kostant and Wallach give a complete description of the action of $A$ on $\fglomega$.
We give an example of a matrix in $\fgl(3)_{\Omega}$.  
\begin{exam}\label{ex:omega}
Consider the matrix in $\fgl(3)$ 
$$
X=\left[\begin{array}{ccc}
1 & 2 & 16\\
1& 0 & 4\\
0 & 1 &-3\end{array}\right]_{\mbox{\large .}}
$$
We can compute that $X$ has eigenvalues $\sigma(X)=\{-3, 3, -2\}$ so that $X$ is regular semisimple and that $\sigma(X_{2})=\{2,-1\}$.  Clearly $\sigma(X_{1})=\{1\}$.  Thus $X\in \fgl(3)_{\Omega}$.
\end{exam}

We recall the notational convention introduced in (\ref{eq:polyci}). (If $c_{i}=(z_{1}, z_{2},\cdots, z_{i})\in\C^{i}$, then $p_{c_{i}}(t)=z_{1}+z_{2}t+\cdots + z_{i}t^{i-1} +t^{i}$.)  Let $\Omega_{n}\subset\C^{\frac{n(n+1)}{2}}$ be the Zariski open subset consisting of $c\in\C^{\frac{n(n+1)}{2}}$ with $c=(c_{1},\cdots, c_{i},\cdots, c_{n})$ such that $p_{c_{i}}(t)$ has distinct roots and $p_{c_{i}}(t)$ and $p_{c_{i+1}}(t)$ have no roots in common (remark 2.16 in \cite{KW1}).  It is easy to see that $\fglomega=\bigcup_{c\in\Omega_{n}} \glfibre$.  

Kostant and Wallach describe the $A$-orbit structure on  $\fglomega$ in Theorems 3.23 and 3.28 in \cite{KW1}.  We summarize the results of both of these theorems in one statement below. 
\begin{thm} \label{thm:generics}
The elements of $\fglomega$ are strongly regular.  If $c\in\Omega_{n}$, then $\glfibre=\glsfibre$ is precisely one orbit under the action of the group $A$.  Moreover, $\glfibre$ is a homogeneous space for a free, algebraic action of the torus $(\mathbb{C}^{\times})^{{n\choose 2}}$. 
\end{thm}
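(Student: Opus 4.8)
The plan is to build, for each $c \in \Omega_n$, an explicit free action of the torus $(\C^\times)^{n\choose 2}$ on $\glfibre$ whose orbit map is an isomorphism, and then to identify this torus action with a restriction of the $A$-action. First I would check strong regularity: if $x \in \fglomega$, then each cutoff $x_i$ is regular semisimple, hence regular, so condition (a) of Proposition \ref{prop:sreg} holds; and since $\sigma(x_{i-1}) \cap \sigma(x_i) = \emptyset$, the centralizers $\fz_{\fgl(i-1)}(x_{i-1})$ and $\fz_{\fgl(i)}(x_i)$ are both algebras of diagonalizable operators with disjoint spectra acting on the standard representation restricted appropriately, so their intersection is $0$, giving condition (b). Thus $\fglomega \subset \fgl(n)^{sreg}$ and $\glfibre = \glsfibre$ for $c \in \Omega_n$.

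Next I would analyze the geometry of a single fibre. Fix $c \in \Omega_n$ and $x \in \glfibre$. For each $i$, the characteristic polynomial of $x_i$ is $p_{c_i}(t)$ with distinct roots $\lambda^{(i)}_1,\dots,\lambda^{(i)}_i$, so $x_i$ lies in a single regular semisimple adjoint orbit in $\fgl(i)$, and the centralizer $Z_{GL(i)}(x_i)$ is a maximal torus $T_i \cong (\C^\times)^i$. The recipe for recovering $x$ from $x_{n-1}$: one must produce $x \in \fgl(n)$ with prescribed $(n-1)\times(n-1)$ cutoff $x_{n-1}$ and prescribed characteristic polynomial $p_{c_n}(t)$. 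Writing $x$ in block form with corner data (a column vector, a row vector, and a scalar), the condition that $\det(tI - x) = p_{c_n}(t)$ becomes, after expanding along the last row/column, a system whose solutions — because $\sigma(x_{n-1}) \cap \sigma(x_n) = \emptyset$ forces the relevant resultant to be nonzero — form a single orbit of the "off-diagonal" torus acting by the flows $\Ad(\exp(t x_{n-1}^{j-1}))$, $1 \le j \le n-1$; this is exactly the content of the Kostant--Wallach construction and contributes the factor $(\C^\times)^{n-1}$. Iterating down from $i = n$ to $i = 2$ and composing, the flows (\ref{eq:flows}) for $1 \le i \le n-1$, $1 \le j \le i$ assemble into an action of $(\C^\times)^{\sum_{i=1}^{n-1} i} = (\C^\times)^{n \choose 2}$ on $\glfibre$. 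One checks transitivity by induction on $n$: given $x, y \in \glfibre$, first move $x_{n-1}$ to $y_{n-1}$ using the inductive hypothesis applied to $\fgl(n-1)_{(c_1,\dots,c_{n-1})}$ (whose flows are among the $A$-flows on $\fgl(n)$ since they only involve $x_i$ for $i \le n-1$), then use the top-level torus to match the corner data.

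Freeness is the step I expect to be the main obstacle, and it is where strong regularity does the real work. The orbit $A \cdot x$ has dimension exactly $n \choose 2$ by Definition-Theorem \ref{d:sreg}, and the torus $(\C^\times)^{n \choose 2}$ has the same dimension and acts transitively, so the stabilizer of any point is a subgroup of the same dimension as the torus modulo the orbit — i.e., dimension $0$ — hence finite; but one needs it to be trivial. For this I would examine the tangent space description $V_x = \mathrm{span}\{\partial_x^{[z,x]} : z \in Z_x\}$ from (\ref{eq:dist}): strong regularity says the map $Z_x \to V_x$, $z \mapsto \partial_x^{[z,x]}$, has image of dimension $n \choose 2$, and a dimension count using $\dim Z_{x_i} = i$ (since $x_i$ is regular) together with condition (b) of Proposition \ref{prop:sreg} shows the sum $\sum Z_{x_i}$ is direct and the map is injective on each graded piece; this rules out any nontrivial element of the connected torus fixing $x$ to first order, and since the torus is connected this forces the stabilizer to be trivial. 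Finally, combining the free transitive algebraic action with the fact that $\glfibre$ is a smooth affine variety of dimension $n \choose 2$ (it is a fibre of $\Phi$ over a regular value, by Theorem \ref{thm:Hess} applied via the Hessenberg slice), the orbit map $(\C^\times)^{n \choose 2} \to \glfibre$ is an isomorphism of varieties, so $\glfibre$ is a homogeneous space for the torus, and in particular — being connected — it is a single $A$-orbit. The one genuinely delicate point to nail down is that the torus action just constructed really does coincide with the restriction of the $A$-action rather than merely having the same orbits abstractly; this follows because each generator of the torus is by construction one of the one-parameter flows (\ref{eq:flows}) (the variable $t$ reparametrized multiplicatively via $e^t$ on the subgroup where the flow is periodic-free), but it should be stated carefully.
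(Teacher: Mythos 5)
Your overall strategy --- peel off one level at a time, act by the centralizer torus $Z_{GL(i)}(x_i) \cong (\C^\times)^i$, and assemble the flows into a $(\C^\times)^{n\choose 2}$-action --- is essentially the same as the construction the paper develops with the $\Gamman$ morphisms (Section \ref{s:gamma}) and as the $\fgl(3)$ sketch given in Section \ref{s:generics}. So the route is not genuinely different. The strong-regularity check and the inductive transitivity argument are fine as sketches, with the one caveat that the "solution variety is a single torus orbit" step (your resultant remark) is exactly the nontrivial computation the paper does via Proposition \ref{prop:charpoly} and the $z_{k,1}y_{k,n_k}\in\C^\times$ analysis; you flag it as known, which is acceptable for a proposal.

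The real gap is in the freeness argument. You correctly deduce that the stabilizer of a point under $(\C^\times)^{n\choose 2}$ has dimension $0$, hence is finite, and you correctly say that "one needs it to be trivial." But the tangent-space computation you then invoke only re-proves that the stabilizer has trivial Lie algebra, i.e.\ that it is finite, and your final step --- "since the torus is connected this forces the stabilizer to be trivial" --- is a non sequitur: a connected torus can act with nontrivial finite isotropy (e.g.\ $\C^\times$ acting on $\C^\times$ by $g\cdot v = g^2 v$). What is actually needed is connectedness of the \emph{stabilizer}, not of the torus, and that is a genuine lemma. The paper obtains it in Lemma \ref{l:conn} (via the explicit Toeplitz computation in Lemma \ref{l:stab} and Corollary \ref{c:conn}), or equivalently by constructing an explicit regular inverse to the orbit map in Theorem \ref{thm:embedd}. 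In the present regular-semisimple case the cleanest fix is a direct computation: for $c\in\Omega_n$ every point of $\sol$ has all coordinates $z_k, y_k$ nonzero, and an element of $Z_i\cong(\C^\times)^i$ fixing such a point must act trivially in each coordinate, hence be the identity. Some concrete argument of this kind must be supplied; as written, the proposal asserts triviality of the stabilizer without proving it.
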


We sketch the ideas behind one possible proof of Theorem \ref{thm:generics} in the case of $\fgl(3)$.  For complete proofs and a more thorough explanation, see either \cite{KW1} or \cite{Col2}. % The idea behind the proof is to reparameterize the action of $A$ given by the composition of the flows in (\ref{eq:flows}) by a simpler action of $(\Co^{\times})^{3}$ which allows us to count $A$-orbits in the fibre $\fgl_{c}(3)$ for $c\in\Omega_{3}$.   
%A study of the equations (\ref{eq:flows}) yields that for an $x\in\fgl_{\Omega}(3)$ the $A$-orbit of $x$ is given by:

For $x\in\fgl(3)$ its $A$-orbit is
\begin{equation}\label{eq:aact}
\Ad\left(\left[\begin{array}{ccc}
z_{1} & & \\
 & 1 & \\
 & & 1\end{array}\right] \left[\begin{array}{ccc}
z_{2} & & \\
 & z_{2} & \\ 
 & & 1\end{array}\right]\left[\begin{array}{ccc}
\multicolumn{2}{c}{\exp(tx_{2})} &  \\
 &  & \\  
 & & 1 \end{array}\right]\right)\cdot x,
 \end{equation}
 where $z_{1},\, z_{2}\in\Co^{\times}$ and $t\in\Co$. (See equation (\ref{eq:flows}).)  %The difficulty with analyzing this action is that even for $x\in\fgl_{\Omega}(3)$, $\exp (tx_{2})$ can be complicated (see Example \ref{ex:omega}).  (For larger values of $n$ the flows corresponding to the vector fields $\xi_{f_{i,j}}$ for $j>1$ are much more complicated to handle (see (\ref{eq:flows}))).  
 
 If we let $Z_{i}\subset GL(i)$ be the centralizer of $x_{i}$ in $GL(i)$, we notice from (\ref{eq:aact}) the action of $A$ appears to push down to an action of $Z_{1}\times Z_{2}$.  For $x\in\fgl(3)_{\Omega}$, we should then expect to see an action of $(\Co^{\times})^{3}$ as realizing the action of $A$.
 
Working directly from the definition of the action of $A$ in (\ref{eq:aact}) is cumbersome.  The action of $Z_{2}$ on $x_{2}$ for would be much easier to write down if $x_{2}$ were diagonal.  For $x\in\fgl(3)_{\Omega}$, $x_{2}$ is not diagonal, but it is diagonalizable.  So, we first diagonalize $x_{2}$ and then conjugate by the centralizer $Z_{2}=(\Ct)^{2}$.  If $\gamma(x)\in GL(2)$ is such $(\Ad(\gamma(x))\cdot x)_{2}$  is diagonal, then we can define an action of $(\Ct)^{3}$ on $\fgl(3)_{c}$ for $c\in\Omega_{3}$ by

% show that $(\Ad(\gamma(x))\cdot x)_{2}$ must be of the form:
% \begin{equation}\label{eq:2coff}
 %\left[\begin{array}{ccc} \mu_{1}& 0 & -z_{2}^{-1}\zeta_{2}\\
%0 & \mu_{2} &-z_{3}^{-1}\zeta_{3}\\
%z_{2}& z_{3} & w\end{array}\right ]
%\end{equation}
%with $\mu_{1}\neq\mu_{2}$, $z_{2}, z_{3}\in\Ct$, and $\zeta_{2}, \zeta_{3}\neq 0$ and depending only on $c\in\Omega_{3}$.  We note that all elements of the form (\ref{eq:2coff}) form one orbit under the free action of $(\Ct)^{2}$ given by conjugating by diagonal matrices of the form $diag[z_{2}^{\prime}, z_{3}^{\prime}, 1]$.  Similarly, we can show that for $x\in\fgl_{c}(3)$ with $c\in\Omega_{3}$, $x_{2}$ is of the form 
%\begin{equation}\label{eq:littlematrix}
%x_{2}=\left[\begin{array}{cc} a & -z_{1}^{-1}\zeta_{1}\\
%z_{1} & y \end{array}\right]
%\end{equation}
%with $z_{1}\in\Ct$ and $\zeta_{1}\neq 0$ and depending only on $c$.  It is easy to see that the action of $GL(1)$ by conjugation on matrices of the form (\ref{eq:littlematrix}) is simply transitive.  
%
%Using these facts, we can then define an action of $Z_{1}\times Z_{2}=(\Ct)^{3}$ on $\fgl_{c}(3)$ by
\begin{equation}\label{eq:3act}
(z_{1}^{\prime}, z_{2}^{\prime}, z_{3}^{\prime}) \cdot x= \Ad\left(\left[\begin{array}{ccc}
z_{1}^{\prime} & & \\
 & 1 & \\
 & & 1\end{array}\right]\gamma(x)^{-1} \left[\begin{array}{ccc}
z_{2}^{\prime} & & \\
 & z_{3}^{\prime} & \\
 & & 1\end{array}\right] \gamma (x)\right)\cdot x,
\end{equation}
with $z_{i}^{\prime}\in\Ct$. % It follows from our above remarks that the action of $(\Ct)^{3}$ in (\ref{eq:3act}) is simply transitive on $\fgl_{c}(3)$.  

We can show (\ref{eq:3act}) is a simply transitive algebraic group action on $\fgl(3)_{c}$ by explicit computation.  Comparing (\ref{eq:3act}) and (\ref{eq:aact}), it is not hard to believe that the action of $(\Ct)^{3}$ in (\ref{eq:3act}) has the same orbits as the action of $A$ on $\fgl(3)_{c}$.  To prove this precisely, one needs to see that $\fgl(3)_{c}^{sreg}=\fgl(3)_{c}$.  This can be proven by computing the tangent space to the action of $(\Co^{\times})^{3}$ in (\ref{eq:3act}) and showing that it is same as the subspace $V_{x}$ in (\ref{eq:dist}), or by appealing to Theorem 2.17 in \cite{KW1}.  The fact that $\fgl(3)_{c}$ is one $A$-orbit then follows easily by applying Theorem 3.12 in \cite{KW1}.  
%using Theorem 3.12 in \cite{KW1} (see [Col]).  

This line of argument is not the one used in \cite{KW1} to prove Theorem \ref{thm:generics}.  The ideas here go back to a preliminary approach by Kostant and Wallach.  However, it is this method that generalizes to describe all orbits of $A$ in $\fgl(n)^{sreg}$.  We describe the general construction in the next section.

\section{Constructing non-generic $A$-orbits}\label{s:gamma}
\subsection{Overview}\label{s:gammasum}
In the next three sections, we classify $A$-orbits in $\fgl(n)^{sreg}$ by determining the $A$-orbit structure of an arbitrary fibre $\glsfibre$.  Let $c_{i}\in\C^{i}$ and $p_{c_{i}}(t)=(t-\lambda_{1})^{n_{1}}\cdots (t-\lambda_{r})^{n_{r}}$ with $\lambda_{j}\neq\lambda_{k}$ for $j\neq k$ (see \ref{eq:polyci}).  To study the action of $A$ on $\glfibre$ with $c=(c_{1}, \cdots, c_{i}, c_{i+1}, \cdots, c_{n})\in\Co^{1}\times\cdots\times\Co^{i}\times\Co^{i+1}\times\cdots\times\Co^{n}= \C^{\frac{n(n+1)}{2}}$, we consider elements of $\fgl(i+1)$ of the form
 %  To accomplish this, we will study elements of $\fgl(i+1)$ of the following form: 
%Let us now discuss the construction that was invented to analyze the structure of the $A$-orbits of dimension ${n\choose 2}$ in the fibres $\glsfibre$. 
 
%Recall that in the generic case we observed that the action of $A$ would be easier to understand if for $x\in\fgl_{\Omega}(3)$, $x_{2}$ were diagonal.  Because of the restrictions on the spectra of adjacent cutoffs, $x_{2}$ is not diagonal, but we are able to diagonalize it.  For a general fibre $\glfibre$, we cannot assume that $x_{i}$ is diagonalizable for $x\in\glfibre$.  However, we can conjugate $x_{i}$ into Jordan canonical form.  If $x_{i}$ is regular, then the centralizer of its (regular) Jordan form is particularly simple and its action is much easier to analyze than the action of the flows in (\ref{eq:flows}).  We are then led to study elements of $\fgl(i+1)$ of the following form.

\begin{equation}\label{eq:bigmatrix}
	  \left[\begin{array}{cc}
\begin{array}{ccc}
\left[\begin{array}{cccc}
\lambda_{1}& 1 & \cdots & 0\\
0& \lambda_{1}&\ddots &\vdots\\
\vdots&\text{ }& \ddots & 1\\
0& \cdots &\cdots &\lambda_{1}\\
\end{array}\right ] & \multicolumn{2}{c}{0}\\
&\ddots& \\
\multicolumn{2}{c}{0}&\left[\begin{array}{cccc}
\lambda_{r}& 1 & \cdots & 0\\
0& \lambda_{r}&\ddots &\vdots\\
\vdots&\text{ }& \ddots & 1\\
0& \cdots &\cdots &\lambda_{r}\\
\end{array}\right ]
\end{array} & \begin{array}{c}
y_{1,1}\\
\vdots\\
\vdots\\
y_{1,n_{1}} \\
\vdots\\
y_{r,1}\\
\vdots\\
\vdots\\
y_{r,n_{r}}
\end{array}\\
\begin{array}{cccccccccc}
z_{1,1}&\cdots &\cdots& z_{1,n_{1}}&\cdots& &z_{r,1}&\cdots &\cdots& z_{r,n_{r}}
\end{array} & w
\end{array}\right]\\ 
\end{equation}
 with characteristic polynomial $p_{c_{i+1}}(t)$.  %By the form of the matrix in (\ref{eq:bigmatrix}), its $i\times i$ cutoff has fixed characteristic polynomial $p_{c_{i}}(t)=(t-\lambda_{1})^{n_{1}}\cdots (t-\lambda_{r})^{n_{r}}$, $c_{i}\in\C^{i}$ (see (\ref{eq:polyci})).

To avoid ambiguity, it is necessary to order the Jordan blocks of the $i\times i$ cutoff of the matrix in (\ref{eq:bigmatrix}).   To do this, we introduce a lexicographical ordering on $\C$ defined as follows. Let $z_{1},\, z_{2}\in \C$, we say that $z_{1}>z_{2}$ if and only if $Re z_{1}> Re z_{2}$ or if $Re z_{1}=Re z_{2}$, then $Im z_{1}>Im z_{2}$.  %We assume, unless otherwise stated, that the eigenvalues of the $i\times i$ cutoff of an element in $\Xi^{i}_{c_{i}, c_{i+1}}$ are in decreasing lexicographical order.  

\begin{dfn}\label{def:sol}
Let $c_{i}\in\C^{i}$ be such that $p_{c_{i}}(t)=(t-\lambda_{1})^{n_{1}}\cdots (t-\lambda_{r})^{n_{r}}$ with $\lambda_{j}\neq \lambda_{k}$ (as in (\ref{eq:polyci})) and let $\lambda_{1}>\lambda_{2}>\cdots>\lambda_{r}$ in the lexicographical ordering on $\C$.  For $c_{i+1}\in \C^{i+1}$, we define $\sol$ as the set of elements $x\in\fgl(i+1)$ of the form (\ref{eq:bigmatrix}) whose characteristic polynomial is $p_{c_{i+1}}(t)$.   We refer to $\sol$ as the solution variety at level $i$.
% (We usually drop the $(c_{i}, c_{i+1})$ when referring to $\sol$ and just call it the solution variety at level $i$.)
\end{dfn}
%\begin{rem}
%The matrices of the form (\ref{eq:2coff}) and (\ref{eq:littlematrix}) are the solution varieties $\Xi^{2}_{c_{2}, c_{3}}$ and $\Xi^{1}_{c_{1}, c_{2}}$ respectively for $c=(c_{1}, c_{2}, c_{3})\in\Omega_{3}$.
%\end{rem}

 %Notice that when we define the solution variety, we have inherently fixed an ordering of the Jordan blocks of the $i\times i$ cutoff.    
 We know from Theorem \ref{thm:Hess} that $\sol$ is non-empty for any $c_{i}\in\C^{i}$ and any $c_{i+1}\in\C^{i+1}$.   Let us denote the regular Jordan form which is the $i\times i$ cutoff of the matrix in (\ref{eq:bigmatrix}) by $J$.  Let $Z_{i}$ denote the centralizer of $J$ in $GL(i)$.  As $J$ is regular, $Z_{i}$ is a connected, abelian algebraic group (see Proposition 14 in \cite{K}).  $Z_{i}$ acts algebraically on the solution variety $\Xi^{i}_{c_{i},c_{i+1}}$ by conjugation.  In the remainder of section \ref{s:gamma}, we give a bijection between $A$-orbits in $\glsfibre$ and free $\Gprod$ orbits on $\Xi^{1}_{c_{1}, c_{2}}\times\cdots\times \Xi^{n-1}_{c_{n-1}, c_{n}}$.  In Section \ref{s:counting}, we will classify the $Z_{i}$-orbits on $\sol$ using combinatorial data of the tuple $c\in \C^{\frac{n(n+1)}{2}}$.  We will then have a complete picture of the $A$ action on $\glsfibre$.  
 
 % \begin{rem}\label{r:meaning}
 %Let $x\in\glfibre\cap S$.  Since $x_{i}$ is regular for $1\leq i\leq n-1$ and has characteristic polynomial $p_{c_{i}}(t)$ and $x_{i+1}$ has characteristic polynomial $\pcp$, there exists an element $g\in GL(i)$ such that $Ad(g)\cdot x\in\sol$.  In this way we can think of the solution variety $\sol$ as a parameter space for the $GL(i)$ action on elements $x\in\fgl(n)$ with the property that $x_{i}$ is regular, and $x_{i}$ and $x_{i+1}$ have the prescribed characteristic polynomials $\pci$ and $\pcp$ respectively.
 %\end{rem}

 %We will see shortly that composing these actions of the groups $Z_{i}$ for $1\leq i\leq n-1$ as in (\ref{eq:3act}) will give us the action of $A$.  
 We now give a brief outline of the construction, which gives the bijection between $A$-orbits in $\glsfibre$ and $\Gprod$ orbits in $\Xi^{1}_{c_{1}, c_{2}}\times\cdots\times \Xi^{n-1}_{c_{n-1}, c_{n}}$.  This construction not only describes $A$-orbits in $\glsfibre$, but all $A$-orbits in the larger set $\glfibre\cap S$, where $S$ is the Zariski open subset of $\fgl(n)$ consisting of elements $x$ whose cutoffs $x_{i}$ for $1\leq i\leq n-1$ are regular.  We know by Proposition \ref{prop:sreg} (a) that $\glsfibre \subset \glfibre\cap S$, and it is in general a proper subset.  (See Example \ref{ex:nsreg} below.)

   %We now outline the construction that will be explained in detail in this section of the paper that generalizes the above example to describe the action of $A$ on a general fibre $\glsfibre.$.  
    %However, what is of greater interest to us is the geometry of $\sol$ and its orbit structure under the action of the centralizer of the $i\times i$ cutoff of the matrix in (\ref{eq:bigmatrix}).  Accordingly, 

The construction proceeds as follows.  For $1\leq i\leq n-2$, we choose a $Z_{i}$-orbit $\orbi\in \sol$ consisting of regular elements of $\fgl(i+1)$.  For $i=n-1$, we choose any orbit $\mathcal{O}^{n-1}_{a_{n-1}}$ of $Z_{n-1}$ in $\Xi^{n-1}_{c_{n-1},c_{n}}$.   Then we define a morphism
$$
\Gamma_{n}^{a_{1},a_{2}, \cdots , a_{n-1}}: \mathcal{O}^{1}_{a_{1}}\times\cdots\times \mathcal{O}^{n-1}_{a_{n-1}}\rightarrow \fgl(n)_{c}\cap S.$$
 by
\begin{equation}\label{eq:gengamma}
\Gamma_{n}^{a_{1},a_{2}, \cdots , a_{n-1}}(x_{1},\cdots, x_{n-1})=
\Ad(g_{1,2}(x_{1})^{-1}g_{2,3}(x_{2})^{-1}
 \cdots g_{n-2,n-1}(x_{n-2})^{-1}) x_{n-1}.
\end{equation}
where $g_{i,i+1}(x_{i})$ conjugates $x_{i}$ into Jordan canonical form (with eigenvalues in descreasing lexicographical order).  We denote the image of the morphism $\Gamman$ by $Im\Gamman$.  %The main theorem is that all $A$-orbits in $\fgl(n)_{c}\cap S$ are obtained as images of mappings $\Gamman$.  

\begin{thm}\label{thm:gengamma}
Every $A$-orbit in $\fgl(n)_{c}\cap S$ is of the form $Im\Gamman$ for some choice of orbits $\orbi\subset\sol$ with $\orbi$ consisting of regular elements of $\fgl(i+1)$ for $1\leq i\leq n-2$.  
\end{thm}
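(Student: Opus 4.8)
The plan is to show that the morphisms $\Gamman$ in (\ref{eq:gengamma}) recover exactly the $A$-orbits in $\fgl(n)_c \cap S$ by an inductive unwinding of the action (\ref{eq:flows}), peeling off one ``level'' at a time. The key structural observation is that the action of $A$ is built from the commuting flows $\Ad(\exp(t j x_i^{j-1}))$, and for $i$ fixed the collection of such flows over $1 \le j \le i$ integrates to the action of the centralizer $Z_{x_i} \subset GL(i)$ (since $x_i$ is regular, $Z_{x_i}$ is connected abelian of dimension $i$, and $\exp$ of the associative algebra generated by $Id_i, x_i, \dots, x_i^{i-1}$ exhausts it). So on $\fgl(n)_c \cap S$ the $A$-action is the composite of the conjugation actions of $Z_{x_1}, \dots, Z_{x_{n-1}}$, each a group isomorphic to the centralizer of a fixed regular Jordan form once we conjugate the relevant cutoff into canonical form via $g_{i,i+1}(x_i)$.

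First I would set up the induction on $n$. The base case $n=2$ is essentially the computation that $Z_{x_1} \cong \C^\times$ acting by conjugation on $\fgl(2)_c \cap S$ has orbits given by $\Gamma_2^{a_1} = Im$ of the inclusion $\mathcal O^1_{a_1} \hookrightarrow \fgl(2)_c$, where $\mathcal O^1_{a_1}$ ranges over $Z_1$-orbits in $\Xi^1_{c_1,c_2}$; here $\Xi^1_{c_1,c_2} = \fgl(2)_c \cap S$ already has the normal form (\ref{eq:bigmatrix}) built in so there is nothing to conjugate. For the inductive step, given $x \in \fgl(n)_c \cap S$, I would use the $(n-1)$-level flows last: conjugating $x$ by $g_{n-1,n}(x_{n-1})^{-1}$ times an element of $Z_{n-1}$ moves $x$ within its $A$-orbit, and the $(n-1)\times(n-1)$ cutoff of $\Ad(g_{n-1,n}(x_{n-1})) \cdot x$ is the fixed regular Jordan form $J$, so $\Ad(g_{n-1,n}(x_{n-1})) \cdot x$ lands in $\Xi^{n-1}_{c_{n-1},c_n}$, in some $Z_{n-1}$-orbit $\mathcal O^{n-1}_{a_{n-1}}$. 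The remaining flows (levels $1$ through $n-2$) only involve cutoffs of size $\le n-2$, hence commute with this last conjugation in the appropriate sense and, by induction applied to the $(n-1)\times(n-1)$ corner, are captured by $\Gamma_{n-1}^{a_1,\dots,a_{n-2}}$ acting on $\mathcal O^1_{a_1} \times \cdots \times \mathcal O^{n-2}_{a_{n-2}}$. Composing, every $A$-orbit is of the form $Im\Gamman$; conversely each $Im\Gamman$ is $A$-stable and connected, hence contained in an $A$-orbit, and a dimension/irreducibility count (using that $Z_i$ acts freely on the regular locus of $\sol$ for $i \le n-2$, which is where the regularity hypothesis on $\mathcal O^i_{a_i}$ enters) forces equality.

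The main obstacle I expect is the bookkeeping around the conjugating elements $g_{i,i+1}(x_i)$: these depend on the point $x_i$, not just on its similarity class, so when I conjugate by $g_{n-1,n}(x_{n-1})^{-1}$ the lower cutoffs $x_1, \dots, x_{n-2}$ are unchanged (because $g_{n-1,n}(x_{n-1}) \in GL(n-1)$ acts trivially on the top-left $(n-2)\times(n-2)$... no --- it acts, so one must track this carefully), so I must verify that the induced map on the product of solution varieties is well-defined and independent of the choices, and that the orbit $\mathcal O^{n-1}_{a_{n-1}}$ obtained does not depend on which representative of the $A$-orbit we started from. Concretely one checks that two choices differ by an element of $Z_{n-1}$, which is absorbed into the orbit label $a_{n-1}$; this is where one invokes that $x$ strongly regular (or just $x \in S$) makes all the centralizers behave as expected, and uses Proposition \ref{prop:sreg}. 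The freeness of the $\Gprod$-action, needed to upgrade ``contained in an orbit'' to ``equal to an orbit,'' I would defer or cite from the surrounding development of section \ref{s:gamma}, since it is the payoff of the whole construction rather than an ingredient of this particular statement.
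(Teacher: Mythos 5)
The top-down induction is genuinely broken. Conjugating $x$ by $g_{n-1,n}(x_{n-1})^{-1}\in GL(n-1)$ does not preserve the lower cutoffs: $(\Ad(g)\cdot x)_i\neq\Ad(g)\cdot x_i$ for $i<n-1$, so the characteristic polynomials of $x_1,\dots,x_{n-2}$ are generically changed and the conjugated element is no longer in $\glfibre$ at all --- the inductive hypothesis simply does not apply to its $(n-1)\times(n-1)$ corner. You flag this worry parenthetically but do not resolve it, and it cannot be fixed in this direction: conjugation by $g\in GL(i)$ commutes with the $A$-flows at levels $\geq i$ but destroys the cutoffs at levels $<i$, which is precisely why the argument in Proposition \ref{r:sense} builds the conjugating element starting from level $1$ and working upward. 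A related slip: conjugating $x$ by $g_{n-1,n}(x_{n-1})^{-1}\cdot z$ for $z\in Z_{n-1}$ is not an $A$-move; the level-$(n-1)$ flows conjugate by the centralizer of $x_{n-1}$, which is $g_{n-1,n}(x_{n-1})^{-1}\,Z_{n-1}\,g_{n-1,n}(x_{n-1})$, not by $g_{n-1,n}(x_{n-1})^{-1}Z_{n-1}$.

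There is a second gap in the freeness/regularity discussion. The theorem allows non-free $Z_i$-orbits $\orbi$; regularity is imposed only for $i\leq n-2$ and does not imply freeness. For $x\in\glfibre\cap S$ outside $\glsfibre$ (Example \ref{ex:nsreg} shows these exist), the relevant $Z_i$-orbits have non-trivial, connected stabilizers, so $\Gamman$ cannot be defined on $\Gprod$; your plan to ``defer or cite the freeness of the $\Gprod$-action'' cites a fact that is false here. The missing ingredient --- and the genuinely new content of the paper's proof for this general statement --- is Theorem \ref{thm:split}, the algebraic group splitting $Z_i=Stab(x_{a_i})\times K_{a_i}$, which lets one define $\Gamman$ on $K_{a_1}\times\cdots\times K_{a_{n-1}}$ and carry over Proposition \ref{r:sense}, Theorem \ref{thm:embedd}, Proposition \ref{prop:Aacts}, and the tangent-space computation $T_y(Im\Gamman)=V_y$ verbatim. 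With that in hand, the ``every orbit'' direction is not an induction at all: for $x\in\glfibre\cap S$, regularity of each cutoff together with Remark \ref{r:stupid} produces, independently for each $i$, some $g_i\in GL(i)$ with $\Ad(g_i)\cdot x_{i+1}\in\sol$, hence $x_{i+1}\in\Ad(GL(i))\cdot x_{a_i}$ for some orbit $\orbi$ (regular when $i\leq n-2$), and the intrinsic characterization of $Im\Gamman$ in Proposition \ref{r:sense} then puts $x$ directly in the image.
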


%In other words, we link together different $Z_{i}$-orbits in $\s$ for every $i$, $1\leq i\leq n-1$ to obtain an $A$-orbit. 

In section \ref{s:gammageo}, we prove Theorem \ref{thm:gengamma} for $A$-orbits in $\glsfibre$ (see Theorem \ref{thm:Aorb}).  In section \ref{s:gengamma}, we establish the results needed to prove Thoerem \ref{thm:gengamma} for $\glfibre\cap S$.  

% Theorem \ref{thm:gengamma} is then used to show that $A$-orbits in $\glsfibre$ are orbits of the algebraic group $Z=\Gprod$ acting freely on $\glsfibre$.  We show in section \ref{s:gengamma} the necessary results that allow the strongly regular case to generalize immediately to the case of $\glfibre\cap S$.  %We leave the detailed analysis of the particular varieties $\sol$ to section \ref{s:counting}.  

%Before delving into the details of the construction of the map $\Gamman$, we present a technical result in this section that will be of use in the proof of Theorem \ref{thm:Aorb} below.  We retain the notation of the previous section.  %For $c_{i}\in\mathbb{C}^{i}$, we think of $c_{i}$ as representing coefficients of a monic polynomial of degree $i$ (excluding the leading term).  Let $\s$ be the solution variety at level $i$.  %Let $G_{i}$ denote the centralizer of the $i\times i$ cutoff of the matrix in (\ref{eq:big}).   Recall that $G_{i}$ acts algebraically on $\Xi^{i}_{c_{i}, c_{i+1}}$.  

\subsection{Definition and properties of the $\Gamman$ maps}\label{s:gammamap}

We first define the map $\Gamman$ only for $Z_{i}$-orbits $\orbi\subset\sol$ on which $Z_{i}$ acts freely.  To define the map $\Gamman$, we must define a morphism $\orbi\to GL(i+1)$ which sends $y\to g_{i,i+1}(y)$, where $g_{i, i+1}(y)$ conjugates $y$ into Jordan form with eigenvalues in decreasing lexicographical order.  Since $Z_{i}$ acts freely on $\orbi$, we can identify $\orbi\simeq Z_{i}$ as algebraic varieties.  Let $x_{a_{i}}$ be an arbitrary choice of base point for the orbit $\orbi$, i.e. $\orbi=\Ad(Z_{i})\cdot x_{a_{i}}$.  We choose an element $g_{i,i+1}(x_{a_{i}})\in GL(i+1)$ that conjugates the base point $x_{a_{i}}$ into Jordan form (with eigenvalues in decreasing lexicographical order).  For $y=\Ad(k_{i})\cdot x_{a_{i}}$, with $k_{i}\in Z_{i}$, we define 
%f a matrix $g_{i,i+1}(y)\in GL(i+1)$ that conjugates an element $y\in\mathcal{O}_{a_{i}}^{i}$ into Jordan form in such a way that the function $\orbi\to GL(i+1)$ given by $y\to g_{i,i+1}(y)$ is a morphism.  Unlike in the generic case, it is difficult to find such a matrix by explicit calculation.  However, we can argue as follows.  Given an orbit $\orbi\simeq Z_{i}$ on which $Z_{i}$ acts freely, choose a base point $x_{a_{i}}\in\orbi$ and a matrix $g_{i, i+1}(x_{a_{i}})$ that conjugates $x_{a_{i}}$ into Jordan form with eigenvalues in decreasing lexicographical order.  For $y\in\orbi$,  $y=k_{i}\cdot x_{a_{i}}$, for unique $k_{i}\in Z_{i}$.  Then the element 
  \begin{equation}\label{eq:gmatrix}
  g_{i,i+1}(y)=g_{i,i+1}(x_{a_{i}}) k_{i}^{-1}.
  \end{equation}
    For each choice of orbit $\orbi\subset\sol$ for $1\leq i\leq n-1$, we define a morphism $\Gamman: Z_{1}\times\cdots\times Z_{n-1}\to \fgl(n)$,
%\begin{equation*}
%	\Gamma_{n}^{a_{1},\cdots, a_{n-1}}:\Gprod\rightarrow \fgl(n)
%\end{equation*}
\begin{equation}\label{eq:defn} 
\Gamma_{n}^{a_{1},\cdots, a_{n-1}}(k_{1},\cdots, k_{n-1})=
\Ad(k_{1}g_{1,2}(x_{a_{1}})^{-1}k_{2}g_{2,3}(x_{a_{2}})^{-1}
 \cdots k_{n-2} g_{n-2,n-1}(x_{a_{n-2}})^{-1} k_{n-1}) x_{a_{n-1}}.
 \end{equation}
%where $g_{i,i+1}(x_{a_{i}})\in Gl(i+1)$ conjugates the base point of the orbit $\mathcal{O}_{a_{i}}^{i}$,  $x_{a_{i}}$ into Jordan canonical form with the eigenvalues in decreasing lexicographical order. 

%\begin{rem}\label{r:nreg}
%To see that $\Gamman$ maps into $\fgl(n)_{c}\cap S$, note that for $Y\in Im\Gamman$, $Y_{i+1}\in \Ad(GL(i))\cdot x_{a_{i}}$ for $1\leq i\leq n-1$.  As $x_{a_{i}}\in \Xi^{i}_{c_{i}, c_{i+1}}$, $Y_{i+1}$ has characteristic polynomial $p_{c_{i+1}}(t)$.  Also note that for $1\leq i\leq n-2$, $x_{a_{i}}$ is regular and hence $Y_{i+1}$ is regular for $1\leq i\leq n-2$.   Lastly, using the fact that $k_{1}\in Z_{1}$ centralizes the $(1,1)$ entry of $x_{a_{1}}\in \Xi^{1}_{c_{1}, c_{2}}$, it follows that the $(1,1)$ entry of $Y$ is given by $c_{1}$.  
%Note that the definition of the mapping $\Gamman$ does not require the element $x_{a_{n-1}}\in\mathcal{O}^{n-1}_{a_{n-1}}$ to be regular.  In the construction of the map it is not conjugated into Jordan form.  We only require that its $n-1\times n-1$ cutoff is regular.  Thus $\Gamman$ maps into $\fgl(n)_{c}\cap S$.
%\end{rem}

%So here we have to insert a short argument about why the image of the gamma map is in gl(n)_{c}\cap S,
% The same argument used to show that $Im\Gamman\subset\glfibre\cap S$ in Remark \ref{r:nreg} gives immediately that $T\subset \glfibre\cap S$.

We want to give a more intrinsic characterization of the image of the morphism $\Gamman$.  %We start with the following lemma.

\begin{prop}\label{r:sense}
The set $Im\Gamman\subset\glfibre\cap S$ and is equal to
\begin{equation}\label{eq:gammaset}
Im\Gamman=\{x\in\fgl(n) | \; x_{i+1}\in \Ad(GL(i))\cdot x_{a_{i}},\, \text{ for all } 1\leq i \leq n-1\}.
\end{equation}
Thus, $Im\Gamman$ is a quasi-affine subvariety of $\fgl(n)$.  
%A few remarks are in order about what it really means to be in the image of $\Gamma_{n}^{a_{1},\cdots, a_{n-1}}$.  
%Let $x\in \fgl(n)_{c}\cap S$.  Then $x\in Im \Gamma_{n}^{a_{1},\cdots ,a_{n-1}}$ if and only if every projection of $x$ into the solution variety at level $j$, $z_{j}\in \Xi^{j}_{c_{j}, c_{j+1}}$ is in the orbit $\mathcal{O}^{j}_{a_{j}}$ for $1\leq j\leq n-1$. % If there exists a projection $z_{j}\in \Xi^{j}_{c_{j}, c_{j+1}}$ such that $z_{j}\in\mathcal{O}^{j}_{a_{j}}$ then any projection into the solution variety at level $j$,  $\Xi^{j}_{c_{j}, c_{j+1}}$ must be in the orbit $\mathcal{O}^{j}_{a_{j}}$.  
\end{prop}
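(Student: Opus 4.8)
The plan is to prove the two displayed claims --- first that $\Gamma_n^{a_1,\dots,a_{n-1}}$ maps into $\glfibre\cap S$ and lands in the set described by (\ref{eq:gammaset}), and then the reverse inclusion, which together with the fact that the set on the right of (\ref{eq:gammaset}) is visibly quasi-affine gives the whole statement. I would begin by unwinding the definition (\ref{eq:defn}): write $x=\Gamma_n^{a_1,\dots,a_{n-1}}(k_1,\dots,k_{n-1})$ and set $h_j = k_1 g_{1,2}(x_{a_1})^{-1}k_2 g_{2,3}(x_{a_2})^{-1}\cdots k_{j} g_{j,j+1}(x_{a_j})^{-1}$ for $1\le j\le n-2$, so that $x = \Ad(h_{n-2}k_{n-1})x_{a_{n-1}}$. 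The key structural point is that each $g_{i,i+1}(x_{a_i})$ and each $k_i\in Z_i$ lies in $GL(i)$, hence (via the standard embedding $GL(i)\hookrightarrow GL(n)$ fixed in section \ref{s:GZ}) commutes with the cutoff operation in the sense that $\Ad$ of such an element does not disturb entries outside the top-left $i\times i$ block and, more importantly, $(\Ad(g)y)_j = \Ad(g_j)(y_j)$ whenever $g\in GL(j)$. I would state and use this compatibility between $\Ad$ and $x\mapsto x_i$ as the basic lemma; it is immediate from block multiplication.

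Next I would compute the cutoffs $x_{i+1}$ for each $i$. Since $h_{n-2}k_{n-1}$ is a product whose factors beyond the $i$-th step, namely $k_{i+1}, g_{i+1,i+2}(x_{a_{i+1}})^{-1},\dots, k_{n-1}$, all lie in $GL(i+1)$ or larger blocks --- wait, more precisely the factors $k_j$ and $g_{j,j+1}(x_{a_j})^{-1}$ for $j\ge i+1$ lie in $GL(j)\supseteq GL(i+1)$ and hence act on the $(i+1)\times(i+1)$ cutoff only through their own $(i+1)$-cutoffs, while $x_{a_{n-1}}$ restricted to its $(i+1)$-cutoff equals $x_{a_i}$ by Definition \ref{def:sol} of the solution variety (the $i\times i$ cutoff of an element of $\sol$ being the prescribed Jordan form, and the construction being set up so that the $(i+1)$-cutoff of $x_{a_{i+1}}$ is conjugate to $x_{a_i}$ --- this is where I must be careful and invoke the base-point conventions). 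Carrying this out inductively, the factors $g_{i+1,i+2}(x_{a_{i+1}})^{-1}\cdots$ contribute only conjugations within $GL(i+1)$, and one finds $x_{i+1}\in\Ad(GL(i+1))\cdot x_{a_i}$; since $x_{a_i}\in\fgl(i+1)$ has regular $i$-cutoff, a dimension count (or direct argument) upgrades this to $x_{i+1}\in\Ad(GL(i))\cdot x_{a_i}$, because conjugating the regular Jordan form $J$ in $GL(i+1)$ to move it back to $GL(i)$ costs nothing extra. This also shows each $x_i$ is regular, so $x\in S$, and since $x_{i+1}$ has the characteristic polynomial prescribed by $c_{i+1}$ (that polynomial being an invariant of the $GL(i+1)$-conjugacy class of $x_{a_i}$), we get $x\in\glfibre$. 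That establishes $Im\Gamma_n^{a_1,\dots,a_{n-1}}\subseteq$ RHS of (\ref{eq:gammaset}) and $\subseteq\glfibre\cap S$.

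For the reverse inclusion, take $x$ in the right-hand side of (\ref{eq:gammaset}). I would build a preimage recursively: since $x_1\in\Ad(GL(0))\cdot x_{a_0}$ is the trivial statement at the bottom, and more substantively since $x_2\in\Ad(GL(1))\cdot x_{a_1}$, choose $k_1\in GL(1)$ with $\Ad(k_1)x_{a_1}=x_2$; having reached level $i$ with an element $h$ conjugating things appropriately, use $x_{i+1}\in\Ad(GL(i))\cdot x_{a_i}$ together with $g_{i,i+1}(x_{a_i})$ to produce the next factor $k_{i+1}\in Z_{i+1}$ so that after conjugation by $k_{i+1} g_{i+1,i+2}(x_{a_{i+1}})^{-1}$ the $(i+1)$-cutoff is corrected to match $x_{i+1}$; the freeness of the $Z_i$-action on $\orbi$ guarantees these choices can be made consistently inside the centralizers, which is exactly what (\ref{eq:gmatrix}) was designed to permit. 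At the top level one checks that the resulting $\Ad(h_{n-2}k_{n-1})x_{a_{n-1}}$ agrees with $x$ on every cutoff $x_1,\dots,x_n$; since an element of $\fgl(n)$ with prescribed cutoffs up to $n$ is determined by $x_n=x$ itself, equality follows. The last sentence, that $Im\Gamma_n^{a_1,\dots,a_{n-1}}$ is quasi-affine, is then immediate: the condition "$x_{i+1}\in\Ad(GL(i))\cdot x_{a_i}$" defines a locally closed subset of $\fgl(n)$ (each orbit $\Ad(GL(i))\cdot x_{a_i}$ is locally closed, being an orbit of an algebraic group action, and pulling back along the polynomial map $x\mapsto x_{i+1}$ preserves local closedness), and a finite intersection of locally closed sets is locally closed, hence quasi-affine inside affine space.

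The main obstacle I anticipate is the careful bookkeeping in the middle paragraph: tracking precisely how the product of $k_j$'s and $g_{j,j+1}$'s acts on each cutoff requires knowing that the $(i+1)$-cutoff of the base point $x_{a_{i+1}}\in\fgl(i+2)$ is genuinely $GL(i+1)$-conjugate (indeed, via the fixed $g_{i,i+1}(x_{a_i})$) to $x_{a_i}$, which is part of the set-up of the solution varieties and the $\Gamma$-construction but must be spelled out, and getting the right-versus-left placement of the inverses $g_{i,i+1}(x_{a_i})^{-1}$ exactly matching the telescoping in (\ref{eq:defn}). Everything else is block-matrix algebra plus the elementary fact that orbits of algebraic group actions are locally closed.
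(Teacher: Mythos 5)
Your overall strategy matches the paper's: establish both inclusions in~(\ref{eq:gammaset}) (the reverse inclusion by an induction building a preimage level by level), observe separately that the right-hand side lies in $\glfibre\cap S$, and obtain quasi-affineness by noting each condition $x_{i+1}\in\Ad(GL(i))\cdot x_{a_i}$ is the preimage of a $GL(i+1)$-orbit (hence locally closed) under the projection $x\mapsto x_{i+1}$. The reverse inclusion and the quasi-affine part are fine in outline.

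The forward inclusion, however, is where your write-up goes wrong. Two concrete errors: first, the assertion that ``$x_{a_{n-1}}$ restricted to its $(i+1)$-cutoff equals $x_{a_i}$'' is false --- the base points $x_{a_i}\in\Xi^i_{c_i,c_{i+1}}\subset\fgl(i+1)$ are chosen independently of one another, and $(x_{a_{n-1}})_{i+1}$ is simply the top-left $(i+1)\times(i+1)$ corner of the Jordan matrix $J_{n-1}$ padded with entries from the last row and column, not $x_{a_i}$. Second, the claim that factors $k_j$ and $g_{j,j+1}(x_{a_j})^{-1}$ with $j\ge i+1$ (which lie in $GL(j)\supsetneq GL(i+1)$) ``act on the $(i+1)\times(i+1)$ cutoff only through their own $(i+1)$-cutoffs'' is also false: conjugation by an element of $GL(j)$ with $j>i+1$ does \emph{not} commute with the $(i+1)$-cutoff. (The compatibility $(\Ad(g)\cdot y)_{i+1}=\Ad(g)\cdot y_{i+1}$ holds only when $g\in GL(k)$ with $k\le i+1$.) Consequently the ``dimension count upgrade'' from $\Ad(GL(i+1))\cdot x_{a_i}$ to $\Ad(GL(i))\cdot x_{a_i}$ is both unjustified and unnecessary.

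The correct telescoping --- and the reason the paper can call this direction ``clear from the definition'' --- works from the inside out, using exactly Remark \ref{r:stupid}. One shows by downward induction that
\[
\bigl[\Ad\bigl(k_{j}\,g_{j,j+1}(x_{a_{j}})^{-1}\cdots g_{n-2,n-1}(x_{a_{n-2}})^{-1}k_{n-1}\bigr)\cdot x_{a_{n-1}}\bigr]_{j+1}=\Ad(k_j)\cdot x_{a_j},
\]
the base case $j=n-1$ being immediate and the inductive step using that $g_{j,j+1}(x_{a_j})^{-1}$ conjugates the Jordan form $J_{j+1}$ back to $x_{a_j}$, and that $k_j\in Z_j$ centralizes $J_j=(x_{a_j})_j$. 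Peeling off the remaining left factors $k_1 g_{1,2}(x_{a_1})^{-1}\cdots g_{j-1,j}(x_{a_{j-1}})^{-1}\in GL(j)$ (here all factors are genuinely in $GL(j)$, so the cutoff compatibility \emph{is} available) gives $(\Gamma_n^{a_1,\dots,a_{n-1}}(\underline{k}))_{j+1}=\Ad(g)\cdot x_{a_j}$ with $g\in GL(j)$ explicitly, which is the needed membership with no detour through $GL(j+1)$-conjugacy. I would redo the middle paragraph along these lines; the rest of the proposal is sound, though the reverse inclusion still needs the observation that $z_j\in\Xi^j_{c_j,c_{j+1}}\cap\Ad(GL(j))\cdot x_{a_j}$ forces $z_j\in\orbj$ (because an element of $GL(j)$ carrying $x_{a_j}$ to $z_j$ must fix their common $j$-cutoff $J_j$ and hence lie in $Z_j$), which you gesture at via ``freeness'' but do not actually derive.
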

The following simple observation is useful in proving Proposition \ref{r:sense}.  
\begin{rem}\label{r:stupid}
Let $x\in\glfibre\cap S$, and suppose that $g\in GL(i)$ is such that $\Ad(g)\cdot x=\Ad(g)\cdot x_{i}$ is in Jordan canonical form with eigenvalues in decreasing lexicographical order for $1\leq i\leq n-1$.  Then $[\Ad(g)\cdot x]_{i+1}=\Ad(g)\cdot x_{i+1}\in\sol$.  
\end{rem}

\begin{proof}[Proof of Proposition \ref{r:sense}]
For ease of notation, let us denote the set on the RHS of (\ref{eq:gammaset}) by $T$.  We note $T\subset\fgl(n)_{c}\cap S$.  Indeed, let $Y\in T$.  Then $Y_{i+1}\in \Ad(GL(i))\cdot x_{a_{i}}$ for $1\leq i\leq n-1$.  Since $x_{a_{i}}\in \Xi^{i}_{c_{i}, c_{i+1}}$, $Y_{i+1}$ has characteristic polynomial $p_{c_{i+1}}(t)$.  Also note that for $1\leq i\leq n-2$, $x_{a_{i}}$ is regular, and hence $Y_{i+1}$ is regular for $1\leq i\leq n-2$.   Lastly, using the fact that $k_{1}\in GL(1)=Z_{1}$ centralizes the $(1,1)$ entry of $x_{a_{1}}\in \Xi^{1}_{c_{1}, c_{2}}$, it follows that the $(1,1)$ entry of $Y$ is given by $c_{1}\in \Co$.

The inclusion $Im\Gamman\subset T$ is clear from the definition of $\Gamman$ in (\ref{eq:defn}).  To see the opposite inclusion we use induction.  Let $y\in T$.  Then $y_{2}\in\Ad(GL(1))\cdot x_{a_{1}}=\mathcal{O}^{1}_{a_{1}}$, since $Z_{1}=GL(1)$.  Thus, there exists a $k_{1}\in Z_{1}$ such that $y_{2}=\Ad(k_{1})\cdot x_{a_{1}}$.  It follows that $$z_{2}=[\Ad(g_{1,2}(x_{a_{1}}))\Ad(k_{1}^{-1})\cdot y]_{3}=[\Ad(g_{1,2}(x_{a_{1}}))\Ad(k_{1}^{-1})\cdot y_{3}]\in\Xi^{2}_{c_{2}, c_{3}}.$$  But $y_{3}\in \Ad(GL(2))\cdot x_{a_{2}}$, so that $z_{2}\in \Xi^{2}_{c_{2}, c_{3}}\cap \Ad(GL(2))\cdot x_{a_{2}}$.  From which it follows easily that $z_{2}\in  \mathcal{O}^{2}_{a_{2}}$.  Thus, there exists a $k_{2}\in Z_{2}$ such that $$[\Ad(g_{2,3}(x_{a_{2}}))\Ad(k_{2}^{-1})\Ad(g_{1,2}(x_{a_{1}}))\Ad(k_{1}^{-1})\cdot y]_{4}\in \Xi^{3}_{c_{3}, c_{4}}.$$  This completes the first two steps of the induction.  We now assume that there exist $k_{1},\cdots, k_{j-1}\in Z_{1}, \cdots, Z_{j-1}$, respectively such that 
 \begin{equation}\label{eq:shortcut}
z_{j}= [\Ad( g_{j-1,j}(x_{a_{j-1}}))\Ad(k_{j-1}^{-1})\cdots \Ad(g_{1,2}(x_{a_{1}}))  \Ad(k_{1}^{-1})\cdot y]_{j+1}\in\Xi^{j}_{c_{j}, c_{j+1}}.
 \end{equation}
  Since $y_{j+1}\in \Ad(GL(j))\cdot x_{a_{j}}$, it follows that $z_{j}\in\Xi^{j}_{c_{j}, c_{j+1}}\cap \Ad(GL(j))\cdot x_{a_{j}}$.   As above, it follows that $z_{j}\in \orbj$, so that there exists an element $k_{j}\in K_{j}$ such that $$[\Ad(g_{j,j+1}(x_{a_{j}}))\Ad(k_{j}^{-1})\Ad( g_{j-1,j}(x_{a_{j-1}}))\Ad(k_{j-1}^{-1})\cdots \Ad(g_{1,2}(x_{a_{1}}))  \Ad(k_{1}^{-1})\cdot y]_{j+2}\in \Xi^{j+1}_{c_{j+1}, c_{j+2}}.$$
We have made use of Remark \ref{r:stupid} throughout.  By induction, we conclude that there exist $k_{1},\cdots, k_{n-1}\in Z_{1},\cdots , Z_{n-1}$ respectively so that 
 $$
x_{a_{n-1}}= \Ad( k_{n-1}^{-1})\Ad( g_{n-2, n-1}(x_{a_{n-1}}))\Ad(k_{n-2}^{-1})\cdots \Ad(g_{1,2}(x_{a_{1}}))\Ad(k_{1}^{-1})\cdot y.
 $$ 
From which it follows that $y=\Gamma_{n}^{a_{1},\cdots, a_{n-1}}(k_{1},\cdots, k_{n-1})$.  

To see the final statement of the proposition, we observe $T$ is a Zariski locally closed subset of $\fgl(n)$.  Indeed, the set $U_{i}=\{x | x_{i+1}\in\Ad(GL(i))\cdot x_{a_{i}}\}$ is locally closed, since it is the preimage of the orbit $\Ad(GL(i))\cdot x_{a_{i}}\subset\fgl(i+1)$ under the projection morphism $\pi_{i+1}(x)=x_{i+1}$.  The set $T=U_{1}\cap \cdots\cap U_{n-1}$ is locally closed.  
\end{proof}

\begin{rem}
From Proposition \ref{r:sense} it follows that the set $Im\Gamman$ depends only on the orbits $\orbi$ for $1\leq i\leq n-1$, and is thus independent of the choices involved in defining the map $\Gamman$ in (\ref{eq:defn}). 
\end{rem}
\subsection{$\Gamman$ and $A$-orbits in $\glsfibre$}\label{s:gammageo}
%In this section, we work mainly in the analytic category.  The reason being that the group $A$ is an analytic group, so that to analyze the relationship between $Im\Gamma_{n}^{a_{1},\cdots , a_{n-1}}$ and the action of $A$ we have to use anayltic techniques.  We consider the topology on subsets of $\C^{n}$ induced by the Euclidean distance, and we refer to it as the the analytic topology.  %Let us observe at this point that everything we have proven so far goes over to the analytic category.  This is because smooth varieties naturally have complex analytic structure and morphisms between them are holomorphic maps (see [Shaf, Ch 7,8]).   
%Recall that if $G$ is a connected algebraic group over $\mathbb{C}$, then it is naturally a complex Lie group and connected in the Lie group topology (Theorem 11.1.2 in [GW].)  Thus, the group $Z_{i}$ is a connected complex Lie group that acts holomorphically on the analytic submanifold $\mathcal{O}^{i}_{a_{i}}$ of $\fgl(n)$.  

%The first step in connecting the action of $A$ on $\fgl(n)^{sreg}$ to the mappings $\Gamman$ is to understand that the image of a mapping $\Gamman$ is an analytic submanifold of $\fgl(n)$ of dimension ${n\choose 2}$.  (In the following theorem the word ``continuous" means continuous in the analytic topology).  
In this section, we show that the image of the morphism $\Gamman$ is an $A$-orbit in $\glsfibre$.  The first step is to see $Im\Gamman$ is smooth variety.  

\begin{thm}\label{thm:embedd}
The morphism
$$
\Gamman : \Gprod\to \fgl(n)_{c}\cap S
$$ 
is an isomorphism onto its image.  Hence, $Im\Gamman$ is a smooth, irreducible subvariety of $\fgl(n)$ of dimension ${n\choose 2}$.  
\end{thm}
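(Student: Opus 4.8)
The plan is to exhibit an explicit algebraic inverse to $\Gamman$ on its image. By Proposition \ref{r:sense}, we have the intrinsic description
$$
Im\Gamman=\{x\in\fgl(n) \mid x_{i+1}\in \Ad(GL(i))\cdot x_{a_{i}},\ 1\leq i\leq n-1\},
$$
so a point $x\in Im\Gamman$ determines, for each $i$, the orbit element $x_{i+1}$ lying in $\Ad(GL(i))\cdot x_{a_i}$. First I would read off the components $k_1,\dots,k_{n-1}$ of the preimage recursively, exactly mirroring the induction in the proof of Proposition \ref{r:sense}: set $y=x$; since $y_2\in\Ad(GL(1))\cdot x_{a_1}$ and $Z_1=GL(1)$ acts freely, there is a unique $k_1\in Z_1$ with $y_2=\Ad(k_1)\cdot x_{a_1}$; having produced $k_1,\dots,k_{j-1}$, form $z_j$ as in (\ref{eq:shortcut}), observe $z_j\in\orbj$, and use freeness of the $Z_j$-action to get a unique $k_j\in Z_j$ with $z_j=\Ad(k_j)\cdot x_{a_j}$. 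The map $x\mapsto(k_1,\dots,k_{n-1})$ so defined is a set-theoretic inverse to $\Gamman$ by the computation already carried out in Proposition \ref{r:sense}.

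The substantive point is that this inverse is a morphism of varieties. The key step is therefore: for a fixed $Z_i$-orbit $\orbi$ on which $Z_i$ acts freely, the map $\orbi\to Z_i$, $\Ad(k_i)\cdot x_{a_i}\mapsto k_i$, is an isomorphism of varieties. Since $Z_i$ is a connected commutative algebraic group acting freely on $\orbi$ (Proposition 14 in \cite{K}), the orbit map $Z_i\to\orbi$, $k_i\mapsto\Ad(k_i)\cdot x_{a_i}$, is a bijective morphism; one needs it to be an isomorphism, i.e.\ that $\orbi\simeq Z_i$ as varieties. This holds because the action is free and $Z_i$ is a smooth affine group: the orbit is locally closed and the orbit map is a bijective immersion onto a smooth variety of the same dimension, hence an isomorphism in characteristic zero. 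Granting this, the recursion above is built entirely from the morphisms $z_j\mapsto k_j$ composed with the polynomial operations $x\mapsto x_{j+1}$ (taking cutoffs) and $\Ad$ by the fixed matrices $g_{i,i+1}(x_{a_i})$ and by the $k_i$ already constructed; each stage is a morphism, so $x\mapsto(k_1,\dots,k_{n-1})$ is a morphism $Im\Gamman\to\Gprod$.

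Finally I would assemble the conclusions. We have a morphism $\Gamman:\Gprod\to Im\Gamman$ with a two-sided inverse morphism, so $\Gamman$ is an isomorphism onto its image. Since $\Gprod$ is a product of connected commutative algebraic groups $Z_i$, each of which is smooth irreducible of dimension $\dim Z_i=i$ (as $Z_i$ centralizes a regular element of $\fgl(i)$), the product is smooth and irreducible of dimension $\sum_{i=1}^{n-1} i={n\choose 2}$; transporting along the isomorphism, $Im\Gamman$ is a smooth irreducible subvariety of $\fgl(n)$ of dimension ${n\choose 2}$.

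The main obstacle I anticipate is the variety-theoretic claim $\orbi\simeq Z_i$ — one must be careful that freeness of the action, together with smoothness of $Z_i$ and the characteristic-zero hypothesis, genuinely upgrades the bijective orbit morphism to an isomorphism, rather than merely to a bijection or a homeomorphism; everything else is bookkeeping built on the already-established Proposition \ref{r:sense}.
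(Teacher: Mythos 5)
Your proposal is correct and follows essentially the same route as the paper: both construct the inverse morphism $\Psi$ recursively, mirroring the induction in Proposition \ref{r:sense}, with the key step being that the orbit map $Z_i \to \orbi$ is a variety isomorphism when the action is free. The paper states that isomorphism without detailed justification (it is invoked already in section \ref{s:gammamap} when defining $\Gamman$); your remarks about why freeness plus smoothness plus characteristic zero upgrade the bijective orbit morphism to an isomorphism make that step explicit but do not constitute a different approach.
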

\begin{proof}
%We must show two things.  First, we need to show that $\Gamman$ is a topological embedding when $Im\Gamman$ is equipped with the subspace topology.  Secondly, we must show that $\Gamman$ is an immersion.  

We explicitly construct an inverse $\Psi$ to $\Gamman$ and show that $\Psi: Im\Gamman\to \Gprod$ is a morphism.  Specifically, we show that there exist morphisms $\psi_{i}: Im\Gamman\to Z_{i}$ for $1\leq i\leq n-1$ so that the morphism 
\begin{equation}\label{eq:inverse}
\Psi=(\psi_{1},\cdots , \psi_{n-1}): Im\Gamman\to \Gprod
\end{equation}
is an inverse to $\Gamman$.  The morphisms $\psi_{i}$ are constructed inductively.  

Given $y\in Im\Gamman,\, y_{2}\in\mathcal{O}^{1}_{a_{1}}\subset\Xi^{1}_{c_{1},\, c_{2}}$ by Proposition \ref{r:sense}.   Thus, $y_{2}=\Ad(k_{1})\cdot x_{a_{1}}$ for a unique $k_{1}$ in $Z_{1}$.  The map $\mathcal{O}^{1}_{a_{1}}\to Z_{1}$ given by $\Ad(k_{1})\cdot x_{a_{1}}\to k_{1}$ is an isomorphism of smooth affine varieties.  Hence, the map $\psi_{1}(y)=k_{1}$ is a morphism.   

Arguing as in the proof of Proposition \ref{r:sense}, suppose that we have defined morphisms $\psi_{1}, \cdots , \psi_{j-1}$, with $\psi_{i}: Im\Gamman\to Z_{i}$ for $1\leq i\leq j-1$.  Then the function $Im\Gamman\to\mathcal{O}^{j}_{a_{j}}$ given by equation (\ref{eq:shortcut}),
$$
y\to [\Ad( g_{j-1,j}(x_{a_{j-1}}))\Ad(\psi_{j-1}(y)^{-1})\cdots \Ad(g_{1,2}(x_{a_{1}}))  \Ad(\psi_{1}(y)^{-1})\cdot y]_{j+1}
$$
is a morphism.  We can then define a morphism $\psi_{j}: Im\Gamman\to Z_{j}$ given by $\psi_{j}(y)=k_{j}$, where $k_{j}$ is the unique element of $Z_{j}$ such that 
\begin{equation}\label{eq:psij}
\Ad(k_{j})\cdot x_{a_{j}}=[\Ad( g_{j-1,j}(x_{a_{j-1}}))\Ad(\psi_{j-1}(y)^{-1})\cdots \Ad(g_{1,2}(x_{a_{1}}))  \Ad(\psi_{1}(y)^{-1})\cdot y]_{j+1}.
\end{equation}
%
%Using (\ref{eq:gmatrix}), we know that the element $g_{1,2}(\psi_{1}(y))\in GL(2)$ which conjugates $y_{2}$ into Jordan form depends regularly on $\psi_{1}(y)$ and thus continuously on $x$.  Then, we have a continuous map 
%$$
%x\to (g_{1,2}(\psi_{1}(y))\cdot x)_{3}\in \Xi^{2}_{c_{2},\, c_{3}}
%$$

%This defines $\psi_{2}(x)=k_{2}$ where $k_{2}$ is the unique element of $Z_{2}$ such that $(g_{1,2}(\psi_{1}(y))\cdot x)_{3}=k_{2}\cdot x_{a_{2}}$.  This starts off the induction.  Now, suppose that we have defined continuous maps $\psi_{2},\cdots, \psi_{l}$ for $2\leq l\leq j-1$.  Then the $(j+1)\times (j+1)$ cutoff of the matrix 
This completes the induction.  

Now, we need to see that the map $\Psi$ is an inverse to $\Gamman$.   The fact that $\Gamman(\psi_{1}(y),\cdots ,\psi_{n-1}(y))=y$ follows exactly as in the proof of the inclusion $T\subset Im\Gamman$ in Proposition \ref{r:sense}.   

%Consider equation (\ref{eq:psij}) with $j=n-1$.  This gives that $\psi_{n-1}(y)=k_{n-1}$ with $$k_{n-1}\cdot x_{a_{n-1}}=Ad( g_{n-2, n-1}(x_{a_{n-2}}))\Ad(\psi_{n-2}(y)^{-1})\cdots \Ad(g_{1,2}(x_{a_{1}}))  \Ad(\psi_{1}(y)^{-1})\cdot y$$  Now, using the definition of $\Gamman$ in (\ref{eq:defn}), it is easy to see that $\Gamman(\psi_{1}(y),\cdots ,\psi_{n-1}(y))=y$. 

Finally, we show that $\Psi(\Gamman(k_{1},\cdots, k_{n-1}))=(k_{1},\cdots ,k_{n-1})$.  We make the following observation.  Consider the element
$$
\Ad(k_{j}g_{j,j+1}(x_{a_{j}})^{-1}
 \cdots g_{n-2,n-1}(x_{a_{n-2}})^{-1} k_{n-1}) \cdot x_{a_{n-1}}.
$$
The $(j+1)\times (j+1)$ cutoff of this element is equal to $k_{j}\cdot x_{a_{j}}$.  Using this fact with $j=1$, we have $\psi_{1}(y)=k_{1}$.  Assume that we have $\psi_{2}(y)=k_{1}, \cdots , \psi_{l}(y)=k_{l}$ for $2\leq l\leq j-1$.  Using the definition of $\psi_{j}$ in (\ref{eq:psij}), we obtain $$\Ad(\psi_{j}(y))\cdot x_{a_{j}}=[\Ad(k_{j})\Ad(g_{j,j+1}(x_{a_{j}})^{-1}\cdots g_{n-2,n-1}(x_{a_{n-2}})^{-1} k_{n-1}) x_{a_{n-1}}]_{j+1}=\Ad(k_{j})\cdot x_{a_{j}}.$$  Thus, by induction $\Psi\circ\Gamman=id$.  Hence, $\Psi$ is a regular inverse to the map $\Gamman$ and $\Psi$ is an isomorphism of varieties.  
\end{proof}

$Im\Gamman$ is a smooth, irreducible quasi-affine subvariety of $\fgl(n)$.  Thus, $Im\Gamman$ has the structure of a connected analytic submanifold of $\fgl(n)$, and $\Gamman$ is an analytic isomorphism.  We now show that the action of the analytic group $A$ preserves the submanifold $Im\Gamman$.

%\begin{rem}
%In the above proof, we have used that $Lie(Stab(x_{a_{i}}))=\{Z\in\mathfrak{g}_{i}|\,[Z, x_{a_{i}}]=0\}$.  We can see this in the following way.    We again work in the analytic category.  Since $Stab(x_{a_{i}})$ is a Lie group over $\mathbb{C}$, we have that $Z\in\mathfrak{g}_{i}$,   $Z\in Lie(Stab(x_{a_{i}}))$ if and only if $\exp(t\, Z)\in Stab(x_{a_{i}})\text{ for all } t\in\mathbb{C}$. 
%	We first suppose that $\Ad(\exp(tZ))\cdot x_{a_{i}}=x_{a_{i}}$, then differentiating at $t=0$ yields $ad(Z)\cdot x_{a_{i}}=0$.  Conversely, suppose that we have $ad(Z)\cdot x_{a_{i}}=0$, then we note that for any $t\in\mathbb{C}$ we have that $\Ad(\exp(t Z))\cdot x_{a_{i}}=\exp (ad(tZ))\cdot x_{a_{i}}=x_{a_{i}}$.  
%\end{rem}

%\begin{rem}
%In section \ref{s:alg}, we will substantially strengthen Theorem \ref{thm:embedd} to obtain that $\Gamman$ is an algebraic isomorphism onto a non-singular subvariety of $\fgl(n)%$. 
%With this analytic structure on $Im\Gamma_{n}^{a_{1},\cdots, a_{n-1}}$, the inverse map\\ $(\Gamma_{n}^{a_{1},\cdots, a_{n-1}})^{-1}$ defined in the previous section is easily seen to be holomorphic, as $\Gamma_{n}^{a_{1},\cdots, a_{n-1}}$ has non-singular differential and is bijective onto its image, thus it is a biholomorphism.  
%\end{rem}

 %Our goal is to see that for $x\in Im\Gamma_{n}^{a_{1},a_{2}, \cdots , a_{n-1}}$ the $A$-orbit of $X$ is the same as its $\ orbit.  
 
 %The first connection between the submanifolds $Im\Gamman\subset \fgl(n)_{c}\cap S$ and the action of $A$ on $\fgl(n)$ is given by the following Proposition.
 
 \begin{prop}\label{prop:Aacts}
 The action of $A$ on $\fgl(n)$ preserves the submanifolds $Im\Gamman$.
 \end{prop}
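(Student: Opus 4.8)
The plan is to show that the analytic vector fields $\xi_{f_{i,j}}$ that generate the Lie algebra $\fa$ of $A$ are everywhere tangent to the submanifold $Im\Gamman$; since $Im\Gamman$ is a connected embedded submanifold and these vector fields are complete (Theorem 3.5 of \cite{KW1}), their flows preserve $Im\Gamman$, and hence so does the group $A$ generated by composing them. By Theorem \ref{thm:embedd} we know $Im\Gamman$ is a smooth irreducible subvariety of dimension ${n\choose 2}$, and by Proposition \ref{r:sense} we have the intrinsic description $Im\Gamman = \{x \in \fgl(n)\mid x_{i+1}\in \Ad(GL(i))\cdot x_{a_i},\ 1\le i\le n-1\}$, which is what I would use to compute the tangent space.

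First I would compute $T_y(Im\Gamman)$ for $y\in Im\Gamman$. Using the description as $U_1\cap\cdots\cap U_{n-1}$ with $U_i = \pi_{i+1}^{-1}(\Ad(GL(i))\cdot x_{a_i})$ and $\pi_{i+1}(x) = x_{i+1}$, a tangent vector $\partial_y^{v}$ lies in $T_y(Im\Gamman)$ precisely when, for each $i$, the $(i+1)\times(i+1)$ cutoff $v_{i+1}$ is tangent to the $GL(i)$-orbit of $y_{i+1}$ in $\fgl(i+1)$, i.e. $v_{i+1}\in [\fgl(i), y_{i+1}]$ (since $y_{i+1}\in \Ad(GL(i))\cdot x_{a_i}$). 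So the key identity to establish is: $\partial_y^{v}\in T_y(Im\Gamman)$ iff $v_{i+1}\in[\fgl(i),y_{i+1}]$ for all $1\le i\le n-1$. Then I would recall from \eqref{eq:coords} and \eqref{eq:dist} that $T_y(A\cdot y) = V_y = span\{\partial_y^{[z,y]}\mid z\in Z_y\}$ where $Z_y = \sum_i Z_{y_i}$ and $Z_{y_i}\subset\fgl(i)$. For $z\in Z_{y_i}\subset\fgl(i)\subset\fgl(n)$, the cutoff of $[z,y]$ at level $j+1$ is $[z_{j+1}, y_{j+1}]$ where $z_{j+1}$ is the cutoff of $z$; for $j+1 > i$ this lies in $[\fgl(i),y_{j+1}]\subset[\fgl(j),y_{j+1}]$, and for $j+1\le i$ it is $[z', y_{j+1}]$ with $z'\in\fgl(j+1)\subset\fgl(j+1)$ — in all cases it lies in $[\fgl(j),y_{j+1}]$. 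Hence every generator $\partial_y^{[z,y]}$ of $V_y$ satisfies the tangency condition, so $V_y\subset T_y(Im\Gamman)$, which gives $T_y(A\cdot y)\subset T_y(Im\Gamman)$ and therefore tangency of all the generating vector fields.

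Once tangency is in hand, I would invoke the standard fact that a complete vector field on an ambient manifold which is everywhere tangent to a connected embedded submanifold has a flow preserving that submanifold (one can integrate the restricted vector field within $Im\Gamman$ and use uniqueness of integral curves); applying this to each $\xi_{f_{i,j}}$ and composing, the $A$-action preserves $Im\Gamman$. Strictly speaking one also wants that $A\cdot y\subset Im\Gamman$ stays inside the fibre $\glfibre\cap S$, but that is automatic since $Im\Gamman\subset\glfibre\cap S$ by Proposition \ref{r:sense} and $A$ preserves $\glfibre$.

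The main obstacle I anticipate is the bookkeeping in the tangent-space computation: correctly handling the cutoff operation $v\mapsto v_{i+1}$ as a linear map on $\fgl(n)$, verifying that it is the differential of $\pi_{i+1}$, and checking the orbit-tangency condition $v_{i+1}\in[\fgl(i),y_{i+1}]$ carefully across the two ranges $j+1\le i$ and $j+1>i$ when testing the generators $\partial_y^{[z,y]}$. A minor subtlety is that $A$ is only an analytic (not algebraic) group, so the argument must be phrased in terms of completeness of vector fields and flows on the analytic submanifold $Im\Gamman$ rather than via algebraic group actions; but since $Im\Gamman$ is smooth and $\Gamman$ is an analytic isomorphism, this causes no real difficulty.
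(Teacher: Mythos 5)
Your argument takes a genuinely different route from the paper's, and as written it has two gaps.

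The paper's proof is short and direct: for $x\in Im\Gamman$ it writes out the flow of $\xi_{f_{i,j}}$ explicitly as $\Ad(h)\cdot x$ with $h=\exp(tjx_i^{j-1})\in GL(i)$, and then verifies the set-theoretic criterion of Proposition \ref{r:sense} for $\Ad(h)\cdot x$ by splitting into the two cases $k\ge i$ (where $(\Ad(h)\cdot x)_{k+1}=\Ad(h)\cdot x_{k+1}$ stays in $\Ad(GL(k))\cdot x_{a_k}$ because $GL(i)\subset GL(k)$) and $k<i$ (where $h$ centralizes $x_i$ so the cutoff is unchanged). No tangent-space computation and no completeness of vector fields is needed; the statement is proved at the level of sets.

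Your approach instead tries to go through tangent spaces and a flow-invariance principle, and there are two places where it does not close up. First, you state that $\partial_y^v\in T_y(Im\Gamman)$ \emph{iff} $v_{i+1}\in[\fgl(i),y_{i+1}]$ for all $i$, i.e.\ that $T_y(Im\Gamman)=\bigcap_i T_y(U_i)$, but you only use and never prove the hard inclusion $\bigcap_i T_y(U_i)\subset T_y(Im\Gamman)$. The containment $T_y(Im\Gamman)\subset\bigcap_i T_y(U_i)$ is automatic; the reverse is a transversality statement about the intersection $U_1\cap\cdots\cap U_{n-1}$ that is not obvious (a naive codimension count does not give the right dimension because the constraints at different levels $i$ involve overlapping blocks of the matrix), so this needs a separate argument. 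Showing that the generators $\partial_y^{[z,y]}$ satisfy $[z,y]_{i+1}\in[\fgl(i),y_{i+1}]$ only places $V_y$ inside $\bigcap_i T_y(U_i)$, not inside $T_y(Im\Gamman)$. Second, the ``standard fact'' you invoke — that a complete vector field everywhere tangent to a connected embedded submanifold has flow preserving that submanifold — is false for non-closed submanifolds: on $\R$ with $X=\partial/\partial t$ and $S=(0,1)$, $X$ is complete and tangent to $S$ but the flow leaves $S$. Here $Im\Gamman$ is only locally closed in $\fgl(n)$, not closed, so tangency plus completeness of $\xi_{f_{i,j}}$ on $\fgl(n)$ does not by itself force the flow to stay in $Im\Gamman$. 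The parenthetical appeal to uniqueness of integral curves only gives local invariance for small $t$, not global. Both gaps are avoided by the paper's direct computation of the flow.
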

 \begin{proof}
 We recall that the action of $A$ on $\fgl(n)$ is given by the composition of the flows in (\ref{eq:flows}) in any order. (See Remark \ref{r:Aorb}.)  Thus, to see that the action of $A$ preserves $Im\Gamman$ it suffices to see that the action of $\C$ in (\ref{eq:flows}) preserves $Im\Gamman$ for any $1\leq i\leq n-1$ and any $1\leq j\leq i$.  This can be seen easily using Proposition \ref{r:sense}.  Indeed, suppose that $x\in Im\Gamman$.   Then by Proposition \ref{r:sense}, $x_{k+1}\in \Ad(GL(k))\cdot x_{a_{k}}$ for any $1\leq k\leq n-1$.  Now we consider $\Ad(\exp (t jx_{i}^{j-1}))\cdot x$ as in (\ref{eq:flows}) with $t\in\C$ fixed.  For ease of notation let $h=\exp (t jx_{i}^{j-1})\in GL(i)$.  We claim $(\Ad(h)\cdot x)_{k+1}\in\Ad(GL(k))\cdot x_{a_{k}}$ for $1\leq k\leq n-1$.  We consider two cases.  Suppose $k\geq i$ and consider $(\Ad(h)\cdot x)_{k+1}$.  We have $(\Ad(h)\cdot x )_{k+1}=\Ad(h)\cdot x_{k+1}$.  But $x_{k+1}\in \Ad(GL(k))\cdot x_{a_{k}}$, so that $\Ad(h)\cdot x_{k+1}\in\Ad(GL(k))\cdot x_{a_{k}}$, as $GL(i)\subset GL(k)$.   % $$(\Ad(g_{k}^{\prime}) y)_{k+1}=(\Ad(g_{k}^{\prime})\Ad(h) \cdot x)_{k+1}=\Ad(g_{k}^{\prime}) \Ad(h) \cdot x_{k+1}=\Ad(g_{k}) x_{k+1}\in \orbk.$$ 
Next, we suppose that $k<i$, so that $k+1\leq i$.  Since $h\in GL(i)$ centralizes $x_{i}$, 
$$
(\Ad(h) x)_{k+1}=(\Ad(h) (x_{i}))_{k+1}=(x_{i})_{k+1}= x_{k+1}\in\Ad(GL(k))\cdot x_{a_{k}}.
$$
By Proposition \ref{r:sense} $\Ad(h)\cdot x\in Im\Gamman$.  This completes the proof.  
 \end{proof}
 
 Before stating the main theorem of this section, we need to state a technical result about the action of $Z_{i}$ on the solution varieties $\sol$.  This result will be proven independently of the following theorem in section \ref{s:gengamma}.  
 \begin{lem}\label{l:conn}
 For $x\in\sol$, the isotropy group of $x$ under the action of $Z_{i}$, $Stab(x)$, is a connected algebraic group.
 \end{lem}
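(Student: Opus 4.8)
The plan is to reduce the statement to the module theory of the commutative algebra $\C[x_i]$. First I would write an element $x \in \sol$ in block form according to the embedding $\fgl(i)\hookrightarrow\fgl(i+1)$, as
\[
x = \begin{pmatrix} J & y \\ z & w \end{pmatrix},
\]
where $J = x_i$ is the fixed regular Jordan matrix appearing in (\ref{eq:bigmatrix}), $y\in\C^i$ is a column vector, $z$ a row vector, and $w\in\C$. Embedding $Z_i\hookrightarrow GL(i+1)$ by $g\mapsto\mathrm{diag}(g,1)$ and using that every $g\in Z_i$ centralizes $J$, one computes
\[
\Ad(g)\cdot x = \begin{pmatrix} J & gy \\ zg^{-1} & w \end{pmatrix},
\]
so $g\in Stab(x)$ if and only if $(g-1)y = 0$ and $z(g-1) = 0$.

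Next I would use that $J$ is regular: then $Z_i$ is precisely the group of units $R^\times$ of the commutative finite-dimensional algebra $R = \C[J]$, and by the Chinese Remainder Theorem $R\cong\prod_{k=1}^r \C[t]/(t^{n_k})$, a product of local Artinian $\C$-algebras with residue field $\C$, the $n_k$ being the Jordan block sizes of $J$. Regarding $\C^i$ as an $R$-module in the obvious way, and the row vectors as an $R$-module via right multiplication (which is well defined since $R$ is commutative), the two conditions above say exactly that $g-1$ lies in the ideals $\mathrm{Ann}_R(y)$ and $\mathrm{Ann}_R(z)$. Putting $I := \mathrm{Ann}_R(y)\cap\mathrm{Ann}_R(z)$ and using the identity $(1+I_1)\cap(1+I_2) = 1 + (I_1\cap I_2)$, I arrive at the description
\[
Stab(x) = (1 + I)\cap R^\times .
\]

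It then remains to show $(1+I)\cap R^\times$ is a connected algebraic group. That it is a Zariski-closed subgroup of $Z_i$ is clear, so only connectedness is at issue, and this I would check factor by factor: write $I = \prod_{k=1}^r I_k$ with $I_k$ an ideal of $R_k := \C[t]/(t^{n_k})$. Since $R_k$ is local, either $I_k$ lies in the maximal ideal $(t)$, in which case $1+I_k\subseteq R_k^\times$ and $1+I_k$ is an affine subspace of $R_k$, hence connected; or $I_k = R_k$, in which case $(1+I_k)\cap R_k^\times = R_k^\times\cong\C^\times\times\C^{n_k-1}$, again connected. A finite product of connected sets is connected, so $Stab(x) = \prod_{k=1}^r\big((1+I_k)\cap R_k^\times\big)$ is connected.

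The step I expect to be the crux is obtaining the form $Stab(x) = (1+I)\cap R^\times$: a priori $Stab(x)$ is the intersection of the two stabilizers $\{g : gy = y\}$ and $\{g : zg = z\}$, and an intersection of connected algebraic subgroups need not be connected, so the point is to recognize each of them as $1 + (\text{ideal})$ intersected with $R^\times$ and to combine the ideals before passing to units. A secondary subtlety to flag is that $I_k$ can equal all of $R_k$ — this forces $\lambda_k$ to be a common root of $p_{c_i}$ and $p_{c_{i+1}}$ and forces $x$ to fail Proposition \ref{prop:sreg}(b) — and in that case $Stab(x)$ is not unipotent, so one genuinely needs the fact that $R_k^\times$ itself is connected.
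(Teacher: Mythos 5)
Your proof is correct, and it is a module-theoretic repackaging of the argument the paper gives via Lemma \ref{l:stab} and Corollary \ref{c:conn}. The paper also decomposes $Z_{i}$ over the Jordan blocks of $J$ and argues per block, but works in explicit coordinates: it considers $A_k\underline{y_k}=\underline{y_k}$ with $A_k\in Z_{J_{\lambda_k}}$ an invertible upper-triangular Toeplitz matrix, and reads off from the Toeplitz shape that $A_k$ is forced to be unipotent whenever some $y_{k,j}$ (or, dually, some $z_{k,j}$) is nonzero; so each per-block stabilizer is either all of $Z_{J_{\lambda_k}}$ or a unipotent subgroup, and connectedness follows since both kinds of group are connected over $\C$. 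Your version makes the same decomposition conceptual: $Z_{i}=R^\times$ for $R=\C[J]$, $Stab(x)=(1+I)\cap R^\times$ with $I=\mathrm{Ann}_R(y)\cap\mathrm{Ann}_R(z)$, and the CRT factorization $R\cong\prod_k R_k$ together with locality of each $R_k$ reduce everything to the observation that $1+I_k$ is either an affine subspace contained in the units or all of $R_k^\times$. The concern you flag, that intersecting the two stabilizers $\{g:gy=y\}$ and $\{g:zg=z\}$ could break connectedness, is genuine and is handled in the paper by computing the stabilizer of the pair $(z_k,y_k)$ per block rather than intersecting afterward; your identity $(1+I_1)\cap(1+I_2)=1+(I_1\cap I_2)$ is the same resolution, stated more transparently. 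What your approach buys is that the case analysis on which coordinates vanish is absorbed into the ideal structure of $R$; what the paper's approach buys is the explicit product description of $Stab(x)$ in Lemma \ref{l:stab}, which it needs again in the proof of Theorem \ref{thm:split} to construct a complementary subgroup $K$ with $Z_{i}=Stab(x)\times K$.
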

 
 Thus, given an orbit of $Z_{i}$, $\mathcal{O}\subset \sol$ 
 \begin{equation}\label{eq:freely}
 \dim(\mathcal{O})=i\text{ if and only if } Z_{i} \text{ acts freely on } \mathcal{O}.
 \end{equation}
 
We are now ready to prove the main theorem of this section. 
\begin{thm}\label{thm:Aorb}
The submanifold $Im\Gamman\subset \glfibre\cap S$ is a single $A$-orbit in $\glsfibre$.  Moreover every $A$-orbit in $\glsfibre$ is of the form $Im\Gamman$ for some choice of free $Z_{i}$-orbits $\mathcal{O}^{i}_{a_{i}}\subset\sol$ with $\orbi\subset \fgl(i+1)^{reg}$, for $1\leq i\leq n-1$. 
\end{thm}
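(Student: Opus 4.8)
The plan is to prove the theorem in two halves, corresponding to its two assertions, and to lean heavily on the structural results already established: Theorem~\ref{thm:embedd} (that $\Gamman$ is an isomorphism onto its image, so $\dim Im\Gamman = {n\choose 2}$), Proposition~\ref{prop:Aacts} (that $A$ preserves $Im\Gamman$), Proposition~\ref{r:sense} (the intrinsic description of $Im\Gamman$), and the concrete description of $A$-orbits as the irreducible components of $\glsfibre$ coming from Theorem 3.12 of \cite{KW1}. First I would verify that $Im\Gamman$ actually lands in $\glsfibre$, not just in $\glfibre\cap S$: by Proposition~\ref{r:sense} an element $x\in Im\Gamman$ has each cutoff $x_{i}$ regular for $1\le i\le n-1$, and $x_{n}=x_{a_{n-1}}$ is the ambient matrix; regularity of $x_{n}$ and condition (b) of Proposition~\ref{prop:sreg} (the centralizer intersection condition) need to be checked. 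The key point is that the base points $x_{a_i}\in\sol$ are of the form~(\ref{eq:bigmatrix}), in which the $i\times i$ cutoff is a regular Jordan form $J$ and the $(i+1)\times(i+1)$ matrix is regular with prescribed characteristic polynomial; the shape~(\ref{eq:bigmatrix}) is exactly designed so that $\fz_{\fgl(i)}(J)\cap\fz_{\fgl(i+1)}(x_{a_i})=0$. I would use the explicit block form of~(\ref{eq:bigmatrix}) together with the freeness hypothesis $\orbi$ a free $Z_i$-orbit — which by~(\ref{eq:freely}) and Lemma~\ref{l:conn} is equivalent to $\dim\orbi = i$ — to get the centralizer condition at every level, hence $Im\Gamman\subset\glsfibre$.

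Next I would show $Im\Gamman$ is a single $A$-orbit. By Proposition~\ref{prop:Aacts}, $A\cdot x\subseteq Im\Gamman$ for $x\in Im\Gamman$. Since $x$ is strongly regular, $\dim(A\cdot x)={n\choose 2}=\dim Im\Gamman$, and $Im\Gamman$ is smooth and irreducible by Theorem~\ref{thm:embedd}; an immersed submanifold of top dimension inside a connected manifold of the same dimension is open, and an $A$-orbit is also closed in $\glsfibre$ (it is an irreducible component of $\glsfibre$ by Theorem 3.12 of \cite{KW1}, since the components of $\glsfibre$ are precisely the $A$-orbits). Therefore $A\cdot x$ is a nonempty open and closed subset of the connected set $Im\Gamman$, so $A\cdot x = Im\Gamman$. (Alternatively one can compare tangent spaces directly: $T_x(A\cdot x) = V_x$ from~(\ref{eq:dist}), and one checks $V_x = T_x(Im\Gamman)$ using the coordinate formula~(\ref{eq:coords}) for the vector fields and the description~(\ref{eq:defn}) of $\Gamman$ — but the dimension-count argument is cleaner.)

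For the converse — every $A$-orbit in $\glsfibre$ arises this way — I would take an arbitrary $x\in\glsfibre$ and build the data $(a_1,\dots,a_{n-1})$ from $x$ itself. Since $x$ is strongly regular, each cutoff $x_i$ is regular (Proposition~\ref{prop:sreg}(a)), so for $1\le i\le n-1$ we may pick $g_i\in GL(i)$ conjugating $x_i$ into Jordan form with eigenvalues in decreasing lexicographical order; then by Remark~\ref{r:stupid}, $(\Ad(g_i)\cdot x)_{i+1}\in\sol$, and I let $\orbi$ be the $Z_i$-orbit of this element. For $1\le i\le n-2$ this orbit consists of regular elements of $\fgl(i+1)$ (again by strong regularity of $x_{i+1}$, which is preserved under the $GL(i)$-conjugation since it only rearranges the ambient matrix). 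Freeness of $Z_i$ on $\orbi$: by~(\ref{eq:freely}) it suffices to see $\dim\orbi=i$, equivalently $\fz_{\fgl(i)}(x_i)\cap\fz_{\fgl(i+1)}(x_{i+1})=0$ after conjugation, which is exactly Proposition~\ref{prop:sreg}(b). Then $x\in Im\Gamman$ by the intrinsic characterization~(\ref{eq:gammaset}) of Proposition~\ref{r:sense} (each $x_{i+1}$ lies in $\Ad(GL(i))\cdot x_{a_i}$ by construction), and by the first half $Im\Gamman = A\cdot x$. The main obstacle I anticipate is the bookkeeping in the first paragraph: carefully extracting the centralizer-intersection condition~(b) of Proposition~\ref{prop:sreg} from the block structure~(\ref{eq:bigmatrix}) and the freeness hypothesis, and making sure strong regularity (not merely membership in $S$) genuinely propagates up the chain of cutoffs — everything else is a packaging of results already in hand.
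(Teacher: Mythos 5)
Your Step 1 has a genuine gap that the tangent-space computation in the paper is specifically designed to avoid. To show $Im\Gamman\subset\glsfibre$ via Proposition~\ref{prop:sreg}, you need condition~(a) at level $n$: that $y$ itself (conjugate to $x_{a_{n-1}}$) is regular in $\fgl(n)$. But the construction of $\Gamman$ in Section~\ref{s:gammamap} only imposes regularity on $\orbi$ for $1\le i\le n-2$; for $i=n-1$ it assumes only that the orbit is free. Deducing ``free $Z_{n-1}$-orbit $\Rightarrow$ regular element of $\fgl(n)$'' is exactly Theorem~\ref{thm:regorbits}, whose proof explicitly cites Theorem~\ref{thm:Aorb} — so following your route makes the argument circular. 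The ``explicit block form plus freeness'' bookkeeping you propose delivers condition~(b) at every level (freeness $\Leftrightarrow$ trivial stabilizer $\Leftrightarrow$ $\fz_{\fgl(i)}(y_i)\cap\fz_{\fgl(i+1)}(y_{i+1})=0$, via Lemma~\ref{l:conn}), but it says nothing about~(a) at the top. This is the very obstacle you flagged at the end of your first paragraph, and it does not evaporate on closer inspection.

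The paper's fix is the alternative you mention but dismiss: differentiate $\Gamman$ to get $T_y(Im\Gamman)=\mathrm{span}\{\partial_y^{[\gamma_{ij},y]}\}$ with $\gamma_{ij}$ a basis of $\fz_{\fgl(i)}(y_i)$; since $y_i$ is regular for $1\le i\le n-1$ (this \emph{is} in the hypothesis), each $\fz_{\fgl(i)}(y_i)=Z_{y_i}$, so $T_y(Im\Gamman)=V_y=T_y(A\cdot y)$. Then $\dim(A\cdot y)=\dim Im\Gamman={n\choose 2}$ by Theorem~\ref{thm:embedd}, and ${n\choose 2}$ is the \emph{definition} of strong regularity (Definition-Theorem~\ref{d:sreg}) — Proposition~\ref{prop:sreg} is never invoked, and regularity of $y_n$ falls out a posteriori rather than being needed as input. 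Note also that your Step~2 (``since $x$ is strongly regular, $\dim(A\cdot x)={n\choose 2}$'') presupposes Step~1 and thus inherits the gap; in the paper the dimension equality comes first and strong regularity is a consequence, not a hypothesis. Your transitivity argument, your open-and-closed refinement via Theorem~3.12 of \cite{KW1}, and your converse in the third paragraph all match the paper and are sound once $Im\Gamman\subset\glsfibre$ is in hand.
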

\begin{proof}
First, we show that $Im\Gamman$ is an $A$-orbit.  For this, we need to describe the tangent space $T_{y}(Im\Gamman)=(d\Gamman)_{\underline{k}}$, where $\underline{k}=(k_{1},\cdots, k_{n-1})\in\Gprod$ and $y=\Gamman(\underline{k})$.  Let $\{ \alpha_{i1},\cdots, \alpha_{ii}\}$ be a basis for $Lie(Z_{i})=\fz_{i}$.  Working analytically, we compute  
\begin{equation}\label{eq:firstdiff}
(d\Gamman)_{\underline{k}}(0,\cdots, \alpha_{ij},\cdots, 0)=\frac{d}{dt}|_{t=0}\; \Gamman(k_{1},\cdots, k_{i}\exp(t\alpha_{ij}), \cdots , k_{n-1}),
\end{equation}
for $1\leq j\leq i$.  Using the definition of the morphism $\Gamman$ the RHS of (\ref{eq:firstdiff}) becomes 
\begin{equation}\label{eq:secdiff}
\frac{d}{dt}|_{t=0} \Ad(k_{1}g_{1,2}(x_{a_{1}})^{-1}\cdots k_{i} \exp (t\alpha_{ij})g_{i,i+1}(x_{a_{i}})^{-1}
 \cdots k_{n-2} g_{n-2,n-1}(x_{a_{n-2}})^{-1} k_{n-1}) x_{a_{n-1}}.
\end{equation}
Let 
\begin{equation}\label{eq:li}
l_{i}=k_{1}g_{1,2}(x_{a_{1}})^{-1}\cdots k_{i}
 \text{ and let } h_{i}=g_{i,i+1}(x_{a_{i}})^{-1} \cdots k_{n-2} g_{n-2,n-1}(x_{a_{n-2}})^{-1} k_{n-1}.
 \end{equation}
 Then we can write (\ref{eq:secdiff}) as 
 $$
 \frac{d}{dt}|_{t=0}\Ad(l_{i}\exp(t \alpha_{ij}) h_{i})\cdot x_{a_{n-1}},
 $$
 which has differential
 \begin{equation}\label{eq:finaldiff}
 \ad(\Ad(l_{i})\cdot\alpha_{ij} )\cdot( \Ad(l_{i}h_{i})\cdot x_{a_{n-1}}).
 \end{equation}
 By definition of the element $l_{i}\in GL(i)$, the $i\times i$ cutoff of $\Ad(l_{i}^{-1}) \cdot y=\Ad(l_{i}^{-1}) \cdot y_{i}$ is in Jordan form (with eigenvalues in decreasing lexicographical order).  Hence elements of the form $\Ad(l_{i})\cdot \alpha_{ij}=\gamma_{ij}$ for $1\leq j\leq i$ form a basis for $\fz_{\fgl(i)} (y_{i})$.  Since $\Ad(l_{i}h_{i})\cdot x_{a_{n-1}}=y$, (\ref{eq:finaldiff}) implies the image of $d(\Gamman)_{\underline{k}}$ is
 \begin{equation}\label{eq:tanspace}
Im(d\Gamman)_{\underline{k}}=span\{\partial_{y}^{[\gamma_{i,j},y]},\; 1\leq i\leq n-1, 1\leq j\leq i\}=T_{y}(Im\Gamman).
 \end{equation}

%Let $y\in Im\Gamman$, then we computed in the proof of Theorem \ref{thm:embedd} that the tangent space at $y$ is
%$$
%T_{y}(Im\Gamman)=span\{\partial_{y}^{[\gamma_{i,j},y]},\; 1\leq i\leq n-1, 1\leq j\leq i\}
%$$
%where $\{\gamma_{i,j}| 1\leq i\leq n-1,1\leq j\leq i\}$ is a basis for $\fz_{\fgl(i)}(y_{i})$ (see (\ref{eq:tanspace})).  
We recall equation (\ref{eq:dist}), 
$$
T_{y}(A\cdot y)=span\{\partial_{y}^{ [z,y]} | z\in Z_{y}\}: =V_{y}.
$$
 Now, $y\in Im\Gamman$ has the property that $y_{i}$ is regular for all $i\leq n-1$, so that $\fz_{\fgl(i)}(y_{i})$ has basis $\{Id_{i}, y_{i}, \cdots , y_{i}^{i-1}\}$ (see \cite{K}, pg 382).   
 Thus,  
 \begin{equation}\label{eq:equalities}
  T_{y}(Im\Gamman)=span\{ \partial_{y}^{[z,y]} | z\in Z_{y}\}
=V_{y}.
\end{equation}
Equation (\ref{eq:equalities}) gives
 \begin{equation}\label{eq:dimensions}
 \dim V_{y}=\dim (A\cdot y)={n\choose 2},
 \end{equation}
 which implies $Im\Gamman\subset\glsfibre$.  By Proposition \ref{prop:Aacts}, $A$ acts on $Im\Gamman$.  We claim that the action of $A$ is transitive on $Im\Gamman$.  Indeed, given an $A$-orbit $A\cdot y$ with $y\in Im\Gamman$, $A\cdot y\subset Im\Gamman$ is a submanifold of the same dimension as $Im\Gamman$ by (\ref{eq:dimensions}), and thus must be open.  The action of $A$ is then clearly transitive on $Im\Gamman$, as $Im\Gamman$ is connected.  %Thus, $Im\Gamman$ is an $A$-orbit in $\glsfibre$.  

We now show that every $A$-orbit in $\glsfibre$ is obtained in this manner.  For $x\in\glsfibre$, by part (a) of Proposition \ref{prop:sreg} and Remark \ref{r:stupid} there exists a matrix $g_{i}\in GL(i)$ such that $z_{i}=Ad(g_{i})\cdot x_{i+1}\in\sol$ and $z_{i}$ is regular for each $1\leq i\leq n-1$.  Thus $z_{i}\in\orbi$, with $\orbi$ an orbit of $Z_{i}$ in $\sol$ consisting of regular elements of $\fgl(i+1)$.  We claim that $Z_{i}$ must act freely on $\orbi$.  We suppose to the contrary that $Stab(x_{a_{i}})$ is non-trivial.  Lemma \ref{l:conn} gives that $\dim(Stab(x_{a_{i}}))\geq 1$.  But, this implies $\dim(Z_{GL(i)}(x_{i})\cap Z_{GL(i+1)}(x_{i+1}))\geq 1$, contradicting part (b) of Proposition \ref{prop:sreg}.   By Proposition \ref{r:sense} $x\in Im\Gamman$ for some choice of free $Z_{i}$-orbits $\orbi\subset \sol$.  This completes the proof of the theorem.

%Now, part (b) of Proposition \ref{prop:sreg} gives that $Z_{i}$ must act freely on $\orbi$.  Otherwise, $\dim(\fz_{\fgl(i)}(y_{i})\cap \fz_{\fgl(i+1)}(y_{i+1}))\geq 1$ by Lemma \ref{l:conn}.  
%From this, it is not difficult to see that $x\in Im\Gamman$ for some choice of orbits $a_{i}=U, L$.  

%Let $S\subset\fgl(n)$ be the Zariski open subset consisting of matrices each of whose cutoffs is regular.  Consider the Zariski open subset of $\nil$ given by $\nil\cap S$.  Note that by the definition of strongly regular, we have $\snil\subset\nil\cap S$.  Given orbits $\mathcal{O}_{i}$ of $G_{i}$ in the nilpotent solution variety at level $i$, $\Xi^{i}_{0,0}$ consisting of regular elements of $GL(i+1)$ for $1\leq i\leq n-1$, one can constructing a $\Gamma{n}$ mapping using these orbits.  The construction is substantially more complicated than the one for $\Gamman$ above, and we will not go into the details here (see [Col]).  Now, every $X\in \nil\cap S$ lies in the image of a mapping $\Gamma_{n}$, just by conjugating successively, each cutoff into a principal Jordan block and observing in which $Z_{i}$-orbit  in $\Xi^{i}_{0,0}$ the resulting element is contained.  But only the mappings $\Gamman$ described in equation $(\ref{eq:gamman}$ are of dimension ${n\choose 2}$, as only these mappings are constructed using the orbits of maximal dimension $i$, $\lorb, \uorb$ at each stage.  Hence $\snil$ is covered by the images of mappings $\Gamman$.  
\end{proof}

\begin{rem}\label{r:different}
Let $\Gamman$ be defined using $Z_{i}$-orbits, $\orbi$ and let $\tGamman$ be defined using $Z_{i}$-orbits $\torbi=\Ad(Z_{i})\cdot x_{\widetilde{a_{i}}}$, where for some $i$, $1\leq i\leq n-1$, $\orbi\cap \torbi=\emptyset$.  Then it follows from Proposition \ref{r:sense} that the $A$-orbits $Im\Gamman$ and $Im\tGamman$ are distinct.  Indeed, suppose to the contrary that $y\in Im\Gamman\cap Im\tGamman$.  By Proposition \ref{r:sense}, we have $y_{i+1}\in \Ad(GL(i))\cdot x_{a_{i}}\cap \Ad(GL(i))\cdot x_{\widetilde{a_{i}}}$.  This implies that there exists $h\in GL(i)$ such that $\Ad(h)\cdot x_{a_{i}}=x_{\widetilde{a_{i}}}$. Since $x_{a_{i}}, \, x_{\widetilde{a_{i}}}\in \sol$, the previous equation forces $h\in Z_{i}$, which implies $\orbi=\torbi$, a contradiction.  We have thus established a bijection between free $\Gprod$ orbits on the product of solution varieties $\Xi^{1}_{c_{1},\, c_{2}}\times\cdots\cdots\times \Xi^{n-1}_{c_{n-1},\, c_{n}}$ and $A$-orbits in $\glsfibre$. 

\end{rem}

On the subvariety $Im\Gamma_{n}^{a_{1},\cdots, a_{n-1}}$, we have a free and transitive algebraic action of the algebraic group $Z=\Gprod$.  This action is defined by the following formula.   

\begin{equation}\label{eq:bigact} \text{If } (\Gamma_{n}^{a_{1},a_{2}, \cdots , a_{n-1}})^{-1}(y)=(k_{1},\cdots ,k_{n-1})\text{, then }
(k_{1}^{\prime},\cdots, k_{n-1}^{\prime})\cdot y=\Gamma_{n}^{a_{1},a_{2}, \cdots , a_{n-1}}(k_{1}^{\prime} k_{1},\cdots, k_{n-1}^{\prime} k_{n-1}).
\end{equation}

  \begin{rem}
 The action in (\ref{eq:bigact}) is the generalization of the action of $(\Ct)^{3}$ in (\ref{eq:3act}) to the non-generic case.  
  \end{rem}
  Thus, the $A$-orbit $Im\Gamman$ is the orbit of an algebraic group acting on a quasi-affine variety.  We now show that $Z=\Gprod$ acts algebraically on the fibre $\glsfibre$.  By Theorem 3.12 in \cite{KW1} the $A$-orbits in $\glsfibre$ are the irreducible components of $\glsfibre$.  Since they are disjoint, these components are both open and closed in $\glsfibre$ (in the Zariski topology on $\glsfibre$).  Following \cite{KW1}, we index these components by $\fgl^{sreg}_{c,i}(n)=A\cdot x(i)$, with $x(i)\in \glsfibre$.  Now, we have morphisms $\phi_{i}: Z\times\fgl^{sreg}_{c,i}(n)\rightarrow \fgl^{sreg}_{c}(n)$ given by the action of $Z$ on $Im\Gamma_{n}^{a_{1},\cdots , a_{n-1}}$.  The sets $Z\times \fgl^{sreg}_{c,i}(n)$ are (Zariski) open in the product $Z\times \glsfibre$ and are disjoint.  Thus, the morphisms $\phi_{i}$ glue to a unique morphism
	$$
	\Phi: Z\times \glsfibre\rightarrow \glsfibre
	\text{ such that }
	 \Phi | _{Z\times \fgl^{sreg}_{c,i}(n)}=\phi_{i}.
	 	 $$
	 The morphism $\Phi$ defines an algebraic action of the group $Z$ on $\glsfibre$ whose orbits are the orbits of $A$ in $\glsfibre$.  We have thus proven the following theorem.
	 \begin{thm}\label{thm:sregact}
	 Let $x\in\glsfibre$ be arbitrary and let $Z_{i}$ be the centralizer in $GL(i)$ of the Jordan form of $x_{i}$ (with eigenvalues in decreasing lexicographical order).  On $\glsfibre$ the orbits of the group $A$ are orbits of a free algebraic action of the connected abelian algebraic group $Z=\Gprod$.
	 	\end{thm}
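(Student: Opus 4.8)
The plan is to assemble the free transitive $Z$-actions on the individual subvarieties $Im\Gamman$ into a single algebraic action of $Z=\Gprod$ on the whole fibre $\glsfibre$, and then to read off that its orbits coincide with the $A$-orbits.

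First I would record the global picture of $\glsfibre$. By Theorem \ref{thm:Aorb} every $A$-orbit in $\glsfibre$ is an image $Im\Gamman$ attached to a tuple of free $Z_i$-orbits $\orbi\subset\sol$ with $\orbi\subset\fgl(i+1)^{reg}$, and by Theorem 3.12 in \cite{KW1} these $A$-orbits are precisely the irreducible components of $\glsfibre$. Since the components are pairwise disjoint, each is the complement of the (Zariski closed) union of the others, hence is simultaneously Zariski open and closed in $\glsfibre$. Following \cite{KW1} I index them as $\fgl^{sreg}_{c,i}(n)=A\cdot x(i)$, each equal to some $Im\Gamman$.

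Next, on each such component I would use the isomorphism $\Gamman$ of Theorem \ref{thm:embedd}, transported through formula (\ref{eq:bigact}), to obtain a free algebraic action of $Z$ that is transitive on $Im\Gamman$; writing it out gives morphisms $\phi_i\colon Z\times\fgl^{sreg}_{c,i}(n)\to\glsfibre$ with image the $i$-th component. The sets $Z\times\fgl^{sreg}_{c,i}(n)$ are pairwise disjoint and open and cover $Z\times\glsfibre$, so the $\phi_i$ patch to a unique morphism $\Phi\colon Z\times\glsfibre\to\glsfibre$ restricting to $\phi_i$ on the $i$-th piece. The group-action axioms for $\Phi$ are then checked componentwise: since $Z$ acts transitively on each $Im\Gamman$ no $Z$-orbit meets two components, so associativity and the identity law for $\Phi$ reduce to the same statements for the individual $\phi_i$, which hold by construction of (\ref{eq:bigact}). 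Hence $\Phi$ is an algebraic action of $Z$ on $\glsfibre$ whose orbits are exactly the components, i.e. exactly the $A$-orbits in $\glsfibre$ by Theorem \ref{thm:Aorb}; freeness is inherited orbit by orbit from the freeness in (\ref{eq:bigact}) (equivalently (\ref{eq:freely})). Finally, $Z$ is connected, abelian and algebraic because each $Z_i$ is the centralizer in $GL(i)$ of a regular element, hence connected and abelian by Proposition 14 in \cite{K}, and a finite direct product preserves these properties.

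The step I expect to be the real point is the gluing: one must be sure each $\fgl^{sreg}_{c,i}(n)$ is genuinely Zariski open in $\glsfibre$ (so that the $Z\times\fgl^{sreg}_{c,i}(n)$ form an open cover and $\Phi$ is a morphism, not merely a constructible map), and that the patched map is an honest group action rather than just a fibrewise one. Both hinge on the disjointness of the components together with the transitivity of each $\phi_i$ on its component, which is exactly what prevents any $Z$-orbit from straddling two pieces.
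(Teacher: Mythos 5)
Your proposal is correct and follows essentially the same route as the paper: identify the $A$-orbits with the irreducible components of $\glsfibre$ via Theorem 3.12 of \cite{KW1}, observe they are open and closed because disjoint, define the $Z$-action on each component through $\Gamman$ and formula (\ref{eq:bigact}), and glue the resulting morphisms $\phi_i$ into a single algebraic action $\Phi$ on $\glsfibre$. The extra remarks you add (componentwise verification of the action axioms, freeness inherited orbitwise, connectedness and commutativity of $Z$ from regularity of the Jordan forms) are sound and simply make explicit what the paper leaves implicit.
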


We end this section with a result that will be of great use in section \ref{s:counting} where we count the number of $A$-orbits in the fibre $\glsfibre$. 

It turns out that the condition in Theorem \ref{thm:Aorb} that $\orbi\subset \fgl(i+1)^{reg}$ is superfluous.
\begin{thm}\label{thm:regorbits}
If $\orbi\subset\sol$ is a free $Z_{i}$-orbit, then $\orbi\subset\fgl(i+1)^{reg}$.

\end{thm}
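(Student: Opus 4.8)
The plan is to show that a free $Z_i$-orbit $\orbi \subset \sol$ cannot contain a non-regular element of $\fgl(i+1)$, by exploiting the rigid structure of the matrices in $\sol$ (equation (\ref{eq:bigmatrix})): their $i\times i$ cutoff is a fixed regular Jordan form $J$, so the only freedom lies in the border entries $y_{k,\ell}$, $z_{k,\ell}$, and the corner $w$. First I would fix $x \in \orbi$ and suppose, for contradiction, that $x$ is not regular in $\fgl(i+1)$. Since $x_i = J$ is regular in $\fgl(i)$, the non-regularity of $x$ is measured by the jump in centralizer dimension passing from level $i$ to level $i+1$: we have $\dim \fz_{\fgl(i+1)}(x) \geq i+2$ (it exceeds the regular value $i+1$), while $\dim \fz_{\fgl(i)}(x_i) = i$.

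The key step is to analyze $\fz_{\fgl(i)}(x_i) \cap \fz_{\fgl(i+1)}(x)$. On one hand, this intersection is exactly the Lie algebra of $\mathrm{Stab}(x)$ for the $Z_i$-action on $\sol$: an element $z \in \fz_{\fgl(i)}(x_i)$, viewed inside $GL(i)\subset GL(i+1)$, fixes $x$ under conjugation iff it also centralizes the border data, i.e. iff $z \in \fz_{\fgl(i+1)}(x)$. Since $Z_i$ acts freely on $\orbi$, Lemma \ref{l:conn} (or rather the statement (\ref{eq:freely}) derived from it) forces $\mathrm{Stab}(x)$ to be trivial, hence $\fz_{\fgl(i)}(x_i) \cap \fz_{\fgl(i+1)}(x) = 0$. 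On the other hand, I would argue that non-regularity of $x$ (with $x_i$ already regular) forces this intersection to be nonzero, which is the desired contradiction. Concretely: $\fz_{\fgl(i+1)}(x)$ contains $\mathrm{Id}_{i+1}$ and the powers $x, x^2, \dots, x^i$; one computes that the images of $x^k$ ($0 \le k \le i$) under the projection $\fgl(i+1) \to \fgl(i+1)/(\text{border and corner}) \cong \fgl(i)$, composed with restriction, land in $\fz_{\fgl(i)}(x_i) = \langle \mathrm{Id}_i, J, \dots, J^{i-1}\rangle$, which is $i$-dimensional. So among the $i+1$ elements $\mathrm{Id}_{i+1}, x, \dots, x^i$ there is a nontrivial linear combination whose $i\times i$-cutoff part of the centralizer... — more cleanly, the point is that $\fz_{\fgl(i+1)}(x)$, having dimension $\geq i+2$, must meet the subspace $\fgl(i) \subset \fgl(i+1)$ (of codimension $2i+1$ in $\fgl(i+1)$, which has dimension $(i+1)^2$) nontrivially when combined with the constraint that centralizing $x$ forces the $\fgl(i)$-component to centralize $x_i$; a dimension count $(i+2) + i > (i+1)^2 - (2i+1) + \dots$ needs care, so I will instead use the explicit description.

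The cleanest route, and the one I would actually write: parametrize $\sol$ and its $Z_i$-orbits concretely. Since $Z_i$ acts freely on $\orbi$ and $\dim Z_i = i$, we have $\dim \orbi = i$. Now embed $\orbi$ into the ``level $i{+}1$'' picture and use the $\Gamma$-type bookkeeping: a free $Z_i$-orbit at level $i$ contributes exactly $i$ to the tangent space of an $A$-orbit via (\ref{eq:finaldiff})–(\ref{eq:tanspace}), and the proof of Theorem \ref{thm:Aorb} shows this tangent space has the form $\mathrm{span}\{\partial_y^{[\gamma_{ij},y]}\}$ with $\gamma_{ij}$ a basis of $\fz_{\fgl(i)}(y_i)$; the argument there that $\dim V_y = \binom{n}{2}$ required precisely that each $y_k$ be regular and that $\fz_{\fgl(k)}(y_k) \cap \fz_{\fgl(k+1)}(y_{k+1}) = 0$. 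Reversing this: if $x \in \orbi$ with $Z_i$ acting freely, then $\mathrm{Stab}(x) = \{e\}$ gives $\fz_{\fgl(i)}(x_i) \cap \fz_{\fgl(i+1)}(x) = 0$, i.e. condition (b) of Proposition \ref{prop:sreg} holds at level $i{+}1$ for the pair $(x_i, x)$; since $x_i = J$ is regular, regularity of $x$ in $\fgl(i+1)$ is then equivalent to $\dim \fz_{\fgl(i+1)}(x) = i+1$, and I claim this follows because $\dim \fz_{\fgl(i+1)}(x) = \dim \fz_{\fgl(i)}(x_i) + \dim(\text{complement}) = i + 1$ once the intersection vanishes — the general inequality $\dim \fz_{\fgl(i+1)}(x) \le \dim \fz_{\fgl(i)}(x_i) + (i+1)$ combined with $\dim \fz_{\fgl(i+1)}(x) \ge i+1$ and the fact that the ``$+1$'' is achieved only when the $\fgl(i)$-part of $\fz_{\fgl(i+1)}(x)$ is forced to be all of $\fz_{\fgl(i)}(x_i)$...

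The main obstacle, and where I'd spend the most effort, is pinning down this last inequality cleanly: one needs a precise comparison between $\fz_{\fgl(i+1)}(x)$, its projection to $\fgl(i)$, and $\fz_{\fgl(i)}(x_i)$. I expect this comes from the standard fact that for $x \in \fgl(i+1)$ with regular cutoff $x_i$, the projection $\pi: \fz_{\fgl(i+1)}(x) \to \fgl(i)$, $z \mapsto z_i$, has image contained in $\fz_{\fgl(i)}(x_i)$ (immediate, since $[z,x]=0$ implies $[z_i, x_i] = 0$ by looking at the top-left block) and kernel equal to $\{z \in \fz_{\fgl(i+1)}(x) : z_i = 0\}$, which one checks is at most $1$-dimensional (spanned by something supported on the last row/column) when $x_i$ is regular. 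Hence $\dim \fz_{\fgl(i+1)}(x) \le \dim\fz_{\fgl(i)}(x_i) + 1 = i+1$, forcing equality and regularity. Combined with the freeness $\Rightarrow$ triviality of stabilizer input, the contradiction with non-regularity is immediate, completing the proof. I would present the kernel-dimension bound as the technical heart, citing \cite{K} for the structure of centralizers of regular elements.
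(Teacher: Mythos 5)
Your approach is genuinely different from the paper's, but it has a real gap: both technical claims in the final paragraph are false.

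First, the image of the projection $\pi:\fz_{\fgl(i+1)}(x)\to\fgl(i)$, $z\mapsto z_i$, need \emph{not} land in $\fz_{\fgl(i)}(x_i)$. Writing $z=\left[\begin{smallmatrix}z_i & u\\ v & w\end{smallmatrix}\right]$ and $x=\left[\begin{smallmatrix}x_i & b\\ c & d\end{smallmatrix}\right]$, the top-left block of $[z,x]$ is $[z_i,x_i]+uc-bv$, so $[z,x]=0$ gives $[z_i,x_i]=bv-uc$, not $[z_i,x_i]=0$. A concrete counterexample with $i=2$: for
\begin{equation*}
x=\left[\begin{array}{ccc}0&1&1\\0&0&0\\1&0&0\end{array}\right],
\end{equation*}
$x_2$ is regular nilpotent, $x^2\in\fz_{\fgl(3)}(x)$, yet $(x^2)_2=\left[\begin{smallmatrix}1&0\\0&0\end{smallmatrix}\right]$ fails to commute with $x_2$. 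Second, the kernel of $\pi$ can be much larger than $1$-dimensional even when $x_i$ is regular: for $x=J\oplus 0\in\fgl(i+1)$ with $J$ the principal nilpotent $i\times i$ Jordan block, one finds $\ker\pi$ has dimension $3$ (spanned by the entries $u\in\ker J$, $v\in\ker J^{T}$, and $w$). So the inequality $\dim\fz_{\fgl(i+1)}(x)\le\dim\fz_{\fgl(i)}(x_i)+1$ does not follow from your argument, and the contradiction you were aiming for does not materialize. (That particular $x$ is not a counterexample to the theorem itself, since its $Z_i$-stabilizer is nontrivial; but it does kill the bound you need.) Note that your first paragraph correctly identifies $\mathrm{Lie}\,(\mathrm{Stab}(x))=\fz_{\fgl(i)}(x_i)\cap\fz_{\fgl(i+1)}(x)$, so freeness plus Lemma~\ref{l:conn} does give vanishing of that intersection; the missing step is the implication from that vanishing (with $x_i$ regular) to $\dim\fz_{\fgl(i+1)}(x)=i+1$, which you try to get from the projection bounds, and those bounds are wrong.

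The paper's proof avoids any such local centralizer estimate. It uses Theorem~\ref{thm:Hess}: the Hessenberg matrix $h\in\fgl(n)_c^{sreg}$ provides, by Remark~\ref{r:stupid} and Proposition~\ref{prop:sreg}, a free $Z_j$-orbit $\mathcal{O}^j_{a_j}\subset\Xi^j_{c_j,c_{j+1}}\cap\fgl(j+1)^{reg}$ for every $j$. Given an arbitrary free orbit $\orbi\subset\sol$, one assembles the chain $\mathcal{O}^1_{a_1},\dots,\mathcal{O}^{i-1}_{a_{i-1}},\orbi$ into a $\Gamma_i$-morphism, applies Theorem~\ref{thm:Aorb} (with $n$ replaced by $i+1$) to get $Im\,\Gamma_i\subset\fgl(i+1)^{sreg}$, hence $\subset\fgl(i+1)^{reg}$ by Proposition~\ref{prop:sreg}(a), and finally observes that elements of $\orbi$ are $GL(i)$-conjugate to elements of $Im\,\Gamma_i$, so regular. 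This is global rather than pointwise: it leverages the already-established $\Gamma$-machinery and strong regularity, rather than attempting a direct codimension estimate on centralizers, which is precisely where your argument breaks.
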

\begin{proof}
Let $c=(c_{1}, c_{2},\cdots, c_{j}, c_{j+1},\cdots, c_{n})\in \C^{\frac{n(n+1)}{2}}$, with $c_{j}\in\C^{j}$ be given.  By Theorem \ref{thm:Hess}, there is a unique upper Hessenberg matrix $h\in\glsfibre$.   This implies that for any $j$ , $1\leq j\leq n-1$, there exists a $g_{j}\in GL(j)$ such that $(\Ad(g_{j})\cdot h)_{j+1}\in\Xi^{j}_{c_{j}, c_{j+1}}$ by Remark \ref{r:stupid}.  Thus, $\Ad(g_{j})\cdot h_{j+1}\in Z_{j}\cdot x_{a_{j}}=\orbj$ for some $x_{a_{j}}\in\Xi^{j}_{c_{j}, c_{j+1}}$.  But $h\in\fgl(n)^{sreg}$ and therefore $h_{j+1}$ is regular by part (a) of Proposition \ref{prop:sreg}, which implies that $\orbj\subset \fgl(j+1)^{reg}$.  Also, by part (b) of Proposition \ref{prop:sreg}, $Z_{j}$ acts freely on $\mathcal{O}_{a_{j}}^{j}$, as in the proof of the last statement of Theorem \ref{thm:Aorb}.  Thus, for any $j$, $1\leq j\leq n-1$, there exists a free $Z_{j}$-orbit in $\Xi^{j}_{c_{j}, c_{j+1}}$ consisting of regular elements of $\fgl(j+1)$.  
%$\orbj\in\Xi^{j}_{c_{j}, c_{j+1}}$ is a free $Z_{j}$ orbit lying in $\fgl(j+1)^{sreg}$.  

Now, let $\orbi\subset \sol$ be any free $Z_{i}$-orbit.  Now, we use the free $Z_{j}$-orbit $\mathcal{O}^{j}_{a_{j}}\subset \fgl(j+1)^{reg}$ as above for $1\leq j\leq i-1$ and $\orbi$ to construct a morphism $\Gammai: Z_{1}\times\cdots\times Z_{i}\to \glfibre\cap S$.  By Theorem \ref{thm:Aorb}, $Im\Gammai\subset \fgl(i+1)^{sreg}$.  Proposition \ref{prop:sreg} (a) then implies $Im\Gammai\subset\fgl(i+1)^{reg}$.   Since elements of $\orbi$ are conjugate to elements of $Im\Gammai$, $\orbi\subset \fgl(i+1)^{reg}$.  This completes the proof.  
\end{proof}
%There is a corollary to this that I may want to put in here.

%\begin{cor}
%Let $x\in\fgl(n)$ be such that $x_{i}$ is regular for all $i\leq n-1$ and $\fz_{\fgl(i)}(x_{i})\cap \fz_{\fgl(i+1)}(x_{i+1})=0$ for all $i$, then $x$ is regular.
%\end{cor}

%\begin{proof}
%This is a restatement of Theorem \ref{thm:regorbits}. 
%\end{proof}

%\subsection{Algebraic action of $Z$ on $\glsfibre$}\label{s:alg}

	 \subsection{$A$-orbits in $\glfibre\cap S$}\label{s:gengamma}
	  We now discuss how the construction in sections \ref{s:gammamap} and \ref{s:gammageo} can be generalized to describe $A$-orbits of dimension strictly less than ${n\choose 2}$ in the Zariski open subset of the fibre $\glfibre\cap S$.  In this case, it is more difficult to define the morphism $\Gamman$ that appears in equation (\ref{eq:gengamma}).  The problem is that it is not clear how to define a morphism $\orbi\to GL(i+1)$ which sends $x\to g_{i,i+1}(x)$ where $\Ad(g_{i,i+1}(x))\cdot x$ is in Jordan form (with eigenvalues in decreasing lexicographical order).   This is not difficult in the strongly regular case, as we are dealing with free $Z_{i}$-orbits $\orbi\simeq Z_{i}$ so that $g_{i, i+1}(x)$ can be defined as in equation (\ref{eq:gmatrix}).  The fortunate fact is that even for an orbit $\orbi\subset\sol$ of dimension strictly less than $i$, there exists a connected, Zariski closed subgroup $K_{i}\subset Z_{i}$ with $K_{i}$ acting freely on $\orbi\simeq K_{i}$.  Therefore, we can mimic what we did in equation (\ref{eq:gmatrix}).  
	    %To prove this, we need to understand in detail the structure of the isotropy group $Stab(x)\subset Z_{i}$ for $x\in\sol$.   

	  To prove this, we need to understand better the action of $Z_{i}$ on $\sol$.  As in section \ref{s:gammasum}, let $J=J_{\lambda_{1}}\oplus\cdots\oplus J_{\lambda_{r}}$ be the $i\times i$ cutoff of the matrix in (\ref{eq:bigmatrix}), where $J_{\lambda_{j}}\in\fgl(n_{j})$ is the Jordan block corresponding to eigenvalue $\lambda_{j}$.  We note since $J$ is regular, $Z_{i}$ is an abelian connected algebraic group, which is the product of groups $\prod_{j=1}^{r}Z_{ J_{\lambda_{j}}} $, where $Z_{ J_{\lambda_{j}}}$ denotes the centralizer of $J_{\lambda_{j}}$.  It is then easy to see that the action of $Z_{i}$ is the diagonal action of the product $\prod_{j=1}^{r}Z_{ J_{\lambda_{j}}}$ on the last column of $x\in \Xi^{i}_{c_{i},c_{i+1}}$ and the dual action on the last row of $x$ (see (\ref{eq:bigmatrix})).  In other words, $Z_{J_{\lambda_{j}}}$ acts only on the columns and rows of $x$ that contain the Jordan block $J_{\lambda_{j}}$ (see (\ref{eq:bigmatrix})).  This leads us to define an action of $Z_{J_{\lambda_{j}}}$ on $\Co^{2 n_{j}}$ 
	  \begin{equation}\label{eq:littleact}
	  z\cdot ([t_{1}, \cdots, t_{n_{j}}], [s_{1}, \cdots, s_{n_{j}}]^{T})=([t_{1}, \cdots, t_{n_{j}}] \cdot z^{-1}, z\cdot [s_{1}, \cdots, s_{n_{j}}]^{T}).
	  \end{equation}
	  For $x\in\sol$, let $\mathcal{O}$ be its $Z_{i}$-orbit, and let $\mathcal{O}_{j}\subset\Co^{2\, n_{j}}$ be the $Z_{ J_{\lambda_{j}}}$-orbit of $x[j]=([z_{j,1},\cdots, z_{j, n_{j}}], [y_{j,1}, \cdots, y_{j, n_{j}}])$ (where the coordinates for $x$ are as in (\ref{eq:bigmatrix})).  It follows directly from our above remarks that 
	\begin{equation}\label{eq:prodorb}
	\mathcal{O}\simeq\mathcal{O}_{1}\times\cdots\times\mathcal{O}_{r},
	\end{equation}
where the isomorphism is $Z_{i}$-equivariant.  Using this description of a $Z_{i}$-orbit $\mathcal{O}\subset\sol$, it is easy to describe the structure of the isotropy groups for the $Z_{i}$-action.  %Recall also that $Z_{J_{\lambda_{j}}}\simeq\mathbb{C}^{\times}\times\mathbb{C}^{n_{j}-1}$ where $n_{j}$ is the size of the Jordan block corresponding to $\lambda_{j}$ in (\ref{eq:bigmatrix}).
   \begin{lem}\label{l:stab}
Let $x\in\Xi^{i}_{c_{i},c_{i+1}}$ and let $Stab(x)\subset Z_{i}$ be the isotropy group of $x$ under the action of $Z_{i}$ on $\Xi^{i}_{c_{i},c_{i+1}}$.  Then, up to reordering, 
\begin{equation}\label{eq:stab}
Stab(x)=\prod_{j=1}^{q} Z_{ J_{\lambda_{j}}}\times\prod_{j=q+1}^{r} U_{j},
\end{equation}
where $U_{j}\subset Z_{ J_{\lambda_{j}}}$ is a unipotent Zariski closed subgroup (possibly trivial) for some $q$, $0\leq q\leq r$.
\end{lem}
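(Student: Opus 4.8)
The plan is to reduce the computation of $Stab(x)$ to a single Jordan block at a time, using the $Z_{i}$-equivariant product decomposition already established, and then to read off the structure of each one-block isotropy group from a short linear-algebra observation.

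First I would invoke (\ref{eq:prodorb}): the $Z_{i}$-orbit of $x$ is $Z_{i}$-equivariantly isomorphic to $\mathcal{O}_{1}\times\cdots\times\mathcal{O}_{r}$, where $\mathcal{O}_{j}\subset\Co^{2n_{j}}$ is the orbit of $x[j]=([z_{j,1},\cdots,z_{j,n_{j}}],[y_{j,1},\cdots,y_{j,n_{j}}]^{T})$ under the action (\ref{eq:littleact}) of $Z_{J_{\lambda_{j}}}$, and the action of $Z_{i}=\prod_{j=1}^{r}Z_{J_{\lambda_{j}}}$ on the product is the product of these actions. Hence $Stab(x)=\prod_{j=1}^{r}S_{j}$, where $S_{j}\subset Z_{J_{\lambda_{j}}}$ is the isotropy group of $x[j]$. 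It therefore suffices to show that each $S_{j}$ is either all of $Z_{J_{\lambda_{j}}}$ or a Zariski closed unipotent subgroup of $Z_{J_{\lambda_{j}}}$; relabelling the eigenvalues $\lambda_{1},\cdots,\lambda_{r}$ so that the first alternative occurs exactly for $j\leq q$ then gives (\ref{eq:stab}).

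Next I would analyse $S_{j}$. Since $J_{\lambda_{j}}$ is a single regular Jordan block in $\fgl(n_{j})$, the group $Z_{J_{\lambda_{j}}}$ is the set of invertible polynomials in $N_{j}:=J_{\lambda_{j}}-\lambda_{j}Id_{n_{j}}$, that is $\{a_{0}Id_{n_{j}}+a_{1}N_{j}+\cdots+a_{n_{j}-1}N_{j}^{n_{j}-1}\;:\;a_{0}\in\Ct\}$, and its unipotent radical $U^{(j)}$ is the Zariski closed subgroup cut out by $a_{0}=1$. By (\ref{eq:littleact}), an element $z\in Z_{J_{\lambda_{j}}}$ lies in $S_{j}$ if and only if $[z_{j,1},\cdots,z_{j,n_{j}}]\,(z-Id_{n_{j}})=0$ and $(z-Id_{n_{j}})\,[y_{j,1},\cdots,y_{j,n_{j}}]^{T}=0$. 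If $x[j]=(0,0)$ both conditions hold for every $z$, so $S_{j}=Z_{J_{\lambda_{j}}}$. Otherwise, writing $z=a_{0}Id_{n_{j}}+a_{1}N_{j}+\cdots$, the matrix $z-Id_{n_{j}}$ is upper triangular with every diagonal entry equal to $a_{0}-1$; if $a_{0}\neq 1$ it is invertible, which forces both the row $[z_{j,1},\cdots,z_{j,n_{j}}]$ and the column $[y_{j,1},\cdots,y_{j,n_{j}}]^{T}$ to vanish, contradicting $x[j]\neq(0,0)$. Hence $a_{0}=1$ for every $z\in S_{j}$, so $S_{j}\subseteq U^{(j)}$; since an isotropy group of an algebraic action is Zariski closed, $S_{j}=:U_{j}$ is a (possibly trivial) Zariski closed unipotent subgroup of $Z_{J_{\lambda_{j}}}$, and the reduction is complete.

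Finally, as for where the work lies: the substantive inputs are the $Z_{i}$-equivariant isomorphism (\ref{eq:prodorb}) and the explicit per-block action (\ref{eq:littleact}), both already in place; granted these, the only new point is the elementary remark that the torus coordinate $a_{0}$ of a centralizing element is forced to be $1$ as soon as the block $J_{\lambda_{j}}$ meets a nonzero entry of the last row or column of $x$ --- and this is exactly what produces unipotency. I do not anticipate a genuine obstacle; the only care needed is bookkeeping, namely performing the relabelling once and recording that the isotropy subgroups are automatically Zariski closed.
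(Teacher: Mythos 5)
Your proof is correct and follows essentially the same route as the paper's: reduce to a single Jordan block via the $Z_i$-equivariant product decomposition (\ref{eq:prodorb}), then use the upper-triangular structure of elements of $Z_{J_{\lambda_j}}$ to show the torus coordinate $a_0$ is forced to equal $1$ whenever the block meets a nonzero entry, whence the stabilizer lies in the unipotent part. The only difference is cosmetic: the paper fixes the largest index $i$ with $y_{k,i}\neq 0$ and examines the $i$th row of the Toeplitz equation (then repeats the argument for the $z$'s), while you observe directly that $z-Id_{n_j}$ is invertible when $a_0\neq 1$, which kills both the row and the column at once; yours is a slightly tidier packaging of the same observation, and you also make explicit the (routine) fact that the isotropy group is Zariski closed, which the paper leaves implicit.
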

\begin{proof}
Suppose that $x\in\sol$ is given by (\ref{eq:bigmatrix}).  For ease of notation, we let $Z_{J_{\lambda_{k}}}=Z_{J_{k}}$.  %It is not  difficult to see that if for some $k$, $1\leq k\leq r$ we have $y_{k, n_{k}}\in\mathbb{C}^{\times}$ or $z_{1,k}\in\mathbb{C}^{\times}$, then the component of $Stab(x)$ in $Z_{J_{k}}$ is trivial.  Thus, we may as well assume that we have $$z_{k,1}=y_{k,n_{k}}=0$$ for all $k$.  
%We note that $Z_{J_{k}}$ acts only on the rows and columns of $x$ containing the Jordan block $J_{\lambda_{k}}$ (see (\ref{eq:bigmatrix})).  
%
By equation (\ref{eq:prodorb}), to compute the stabilizer of $x$ we need only compute the stabilizers for each of the $Z_{J_{k}}$ orbits $\mathcal{O}_{k}=Z_{J_{k}}\cdot x[k]$, where $1\leq k\leq r$.  To compute the stabilizer of $x[k]$, suppose that $y_{k, i}\neq 0$, but for $i< l\leq n_{k}$, $y_{k,l}=0$.   We consider the matrix equation:
\begin{equation}\label{eq:toep}
A_{k}\cdot\underline{y_{k}}=\underline{y_{k}},
\end{equation}
where $A_{k}\in Z_{J_{k}}$ is an invertible upper triangular Toeplitz matrix and $\underline{y_{k}}\in\mathbb{C}^{n_{k}}$ is the column vector $\underline{y_{k}}=(y_{k,1},\cdots,\, y_{k,i}, \,0,\cdots, 0)^{T}$.  As $A_{k}$ is an upper triangular Toeplitz matrix, we see by considering the $i$th row in equation (\ref{eq:toep}) that $A_{k}$ is forced to be unipotent.  If on the other hand, all $y_{k, j}=0$ for $1\leq j\leq n_{k}$, we can argue similarly using the $z_{k, j}$ and the dual action. 

%This gives rise to a system of linear equations which one can solve easily by back substitution.  The equation for the $i$ th row is:
%$$
%a_{k,1}y_{k, i}=y_{k,i}
%$$
%Since $y_{k,i}\in\mathbb{C}^{\times}$, this forces $a_{k,1}=1$.  
%So that the stabilizer, in $Z_{J_{k}}$ consists of unipotent, upper triangular matrices.  We note that the component of $Stab(x)$ in $Z_{J_{k}}$ is automatically a Zariski closed subgroup, since it is the intersection of the closed subgroups $Stab(x)$ and $Z_{k}$.   A similar computation produces the result if we had assumed that we had instead $y_{j,l}=0$ for all $l$ and $z_{k,i}\neq 0$ , but $z_{k,l}=0$ for $1\leq l<i$.  
If $y_{k,l}=0\text{ for all } l$ and $z_{k,l}=0\text{ for all } l$,  then clearly the stabilizer of $x[k]$ is $Z_{J_{k}}$ itself.  Repeating this analysis for each $k$,  $1\leq k\leq r$ and after possibly reordering the Jordan blocks of $x_{i}$, we get the desired result. 
\end{proof}
We have an immediate corollary to the lemma which we stated before Theorem \ref{thm:Aorb} as Lemma \ref{l:conn}.
\begin{cor}\label{c:conn}
For any $x\in \Xi^{i}_{c_{i},c_{i+1}}$ $Stab(x)$ is connected.
\end{cor}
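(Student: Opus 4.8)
The plan is to deduce the corollary directly from the product decomposition of $Stab(x)$ furnished by Lemma \ref{l:stab}. That lemma asserts that, after possibly reordering the Jordan blocks of $x_i$,
\[
Stab(x) = \prod_{j=1}^{q} Z_{J_{\lambda_j}} \times \prod_{j=q+1}^{r} U_j ,
\]
where each $U_j$ is a unipotent Zariski closed subgroup of $Z_{J_{\lambda_j}}$. Since a finite direct product of connected algebraic groups is connected, it suffices to check that each of the two types of factor is connected.

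For a factor of the first type, $Z_{J_{\lambda_j}}$ is the centralizer in $GL(n_j)$ of the single Jordan block $J_{\lambda_j}$, which is a regular element of $\fgl(n_j)$; hence $Z_{J_{\lambda_j}}$ is connected (indeed abelian) by Proposition 14 in \cite{K}, the same result already invoked above to see that $Z_i$ is a connected abelian group. For a factor of the second type, $U_j$ is a unipotent algebraic group over $\Co$, and since $\mathrm{char}\,\Co = 0$ the exponential map is an isomorphism of varieties from the Lie algebra of $U_j$ onto $U_j$; thus $U_j$ is isomorphic to an affine space, and in particular connected. One can also see this by hand from the proof of Lemma \ref{l:stab}: an element of $U_j$ is of the form $I + N$ with $N$ strictly upper triangular Toeplitz, and the defining relation $N\underline{y_k} = 0$ (or its dual, coming from the $z$-coordinates) is linear in the entries of $N$, so $U_j$ is parametrized by a linear subspace and is therefore connected.

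Putting these together, $Stab(x)$ is a finite product of connected groups and hence connected, which is the content of the corollary (and thus of Lemma \ref{l:conn}). I do not expect any real obstacle here: all the work has already been done in Lemma \ref{l:stab}, and the only external input is the standard fact that a unipotent algebraic group in characteristic zero is connected; if one prefers to avoid citing it, the linear parametrization indicated above supplies a self-contained argument.
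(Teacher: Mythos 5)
Your proof is correct and follows the same route as the paper: decompose $Stab(x)$ via Lemma \ref{l:stab} and use that the factors $Z_{J_{\lambda_j}}$ (centralizers of regular elements) and $U_j$ (unipotent groups) are each connected. The extra justification you give for the unipotent case (exponential map in characteristic zero, or the linear parametrization from the proof of Lemma \ref{l:stab}) is sound but not strictly needed; the paper simply invokes connectedness of unipotent groups as a standard fact.
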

\begin{proof}
Upon reordering the eigevalues, we can always assume that $Stab(x)$ has the form given in (\ref{eq:stab}) in Lemma \ref{l:stab}.  This proves the result,  since unipotent algebraic groups are always connected and the groups $Z_{J_{\lambda_{j}}}$ are connected, since they are centralizers of regular elements in $\fgl(n_{j})$.\\
\end{proof}

We can now prove the structural theorem about the group $Z_{i}$ that lets us construct the morphism $\Gamman$ in the general case.  
	 
	\begin{thm}\label{thm:split}
	Let $x\in\sol$ and let $Stab(x)\subset Z_{i}$ denote the isotropy group of $x$ under the action of $Z_{i}$ on $\sol$.  Then as an algebraic group, 
	$$
	Z_{i}=Stab(x)\times K,
	$$
	where $K$ is a connected, Zariski closed algebraic subgroup of $Z_{i}$. 
		\end{thm}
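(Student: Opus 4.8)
The plan is to reduce the assertion, by means of the product decomposition $Z_{i}=\prod_{j=1}^{r}Z_{J_{\lambda_{j}}}$ into centralizers of single Jordan blocks, to a splitting inside each factor, and to carry out that splitting using Lemma \ref{l:stab} together with the fact that in characteristic zero the unipotent part of $Z_{J_{\lambda_{j}}}$ is a vector group. First I would recall that $Z_{i}=\prod_{j=1}^{r}Z_{J_{\lambda_{j}}}$ is an internal direct product of Zariski closed subgroups, and that by Lemma \ref{l:stab}, after reordering the blocks, $Stab(x)=\prod_{j=1}^{q}Z_{J_{\lambda_{j}}}\times\prod_{j=q+1}^{r}U_{j}$ with each $U_{j}\subset Z_{J_{\lambda_{j}}}$ a Zariski closed unipotent (hence connected, by Corollary \ref{c:conn}) subgroup. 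Thus $Stab(x)$ is a ``coordinate'' subgroup for the block decomposition: its intersection with the $j$-th factor is all of $Z_{J_{\lambda_{j}}}$ when $j\le q$ and is $U_{j}$ when $j>q$. Consequently it suffices to split each factor, i.e.\ to produce a connected Zariski closed subgroup $K_{j}\subseteq Z_{J_{\lambda_{j}}}$ with $Z_{J_{\lambda_{j}}}=(Stab(x)\cap Z_{J_{\lambda_{j}}})\times K_{j}$; then $K:=\prod_{j}K_{j}$ is connected and Zariski closed, and $Z_{i}=Stab(x)\times K$ factor by factor (all factors commuting as $Z_{i}$ is abelian, and the intersections being trivial block by block). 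For $j\le q$ one simply takes $K_{j}=\{e\}$.

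The substance is the case $j>q$. Writing $J_{\lambda_{j}}=\lambda_{j}I+N$ with $N$ the nilpotent Jordan block of size $n_{j}$, the group $Z_{J_{\lambda_{j}}}$ consists of the invertible upper triangular Toeplitz matrices $c_{0}I+c_{1}N+\cdots+c_{n_{j}-1}N^{n_{j}-1}$, $c_{0}\ne 0$. The ``diagonal entry'' homomorphism $Z_{J_{\lambda_{j}}}\to\Ct$ is split by $c_{0}\mapsto c_{0}I$, so $Z_{J_{\lambda_{j}}}=(\Ct\cdot I)\times V_{j}$ where $V_{j}=\{I+c_{1}N+\cdots+c_{n_{j}-1}N^{n_{j}-1}\}$ is the unipotent part; being a connected commutative unipotent group over $\C$, $V_{j}$ is a vector group, the truncated logarithm giving an isomorphism of algebraic groups $V_{j}\xrightarrow{\ \sim\ }(\mathrm{Lie}(V_{j}),+)\cong\mathbb{G}_{a}^{\,n_{j}-1}$. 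Every element of $U_{j}$ is unipotent, so its diagonal entry is $1$, whence $U_{j}\subseteq V_{j}$; and under the identification $V_{j}\cong\mathbb{G}_{a}^{\,n_{j}-1}$ the connected closed subgroup $U_{j}$ is a $\C$-linear subspace, because in characteristic zero every morphism of algebraic groups $\mathbb{G}_{a}\to\mathbb{G}_{a}$ is $\C$-linear. Choosing any vector space complement $W_{j}$ to $U_{j}$ inside $V_{j}$ gives a closed connected subgroup with $V_{j}=U_{j}\times W_{j}$; setting $K_{j}=(\Ct\cdot I)\times W_{j}$ we obtain a connected Zariski closed subgroup with $Z_{J_{\lambda_{j}}}=U_{j}\times K_{j}=(Stab(x)\cap Z_{J_{\lambda_{j}}})\times K_{j}$. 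Assembling the $K_{j}$ as above completes the proof.

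The main point — and the only mildly delicate one — is the claim used in the $j>q$ step, that a connected closed subgroup of a vector group over $\C$ admits a complementary closed subgroup, equivalently that it is a linear subspace; this is exactly where characteristic zero enters, and it also explains why the reduction through Lemma \ref{l:stab} cannot be skipped. Indeed an arbitrary vector space complement to $\mathrm{Lie}(Stab(x))$ in $\mathrm{Lie}(Z_{i})$ need \emph{not} exponentiate to a Zariski closed subgroup (it may exponentiate to the transcendental graph of a logarithm, as already in $\Ct\times\C$), so one genuinely needs the block-diagonal product structure of $Stab(x)$ furnished by Lemma \ref{l:stab} to know that, factor by factor, the toral direction of $Z_{i}$ is either entirely contained in $Stab(x)$ (when $j\le q$) or entirely free to be placed in $K$ (when $j>q$), with all the remaining freedom confined to the unipotent directions where a linear complement always exists.
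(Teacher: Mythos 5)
Your proof is correct and follows the same strategy as the paper: use Lemma \ref{l:stab} to reduce to complementing each factor $Stab(x)\cap Z_{J_{\lambda_j}}$ inside $Z_{J_{\lambda_j}}$, take $K_j=\{e\}$ when the whole factor lies in $Stab(x)$, and otherwise build $K_j$ as the scalar torus times a complement to $U_j$ inside the unipotent part. The difference is in how the complement is justified. The paper works at the Lie algebra level: it decomposes $\mathfrak{z}_{J_{\lambda_j}}=\mathfrak{z}^{ss}_{J_{\lambda_j}}\oplus\mathfrak{z}^{n}_{J_{\lambda_j}}$, chooses a Lie-algebra complement $\widetilde{\mathfrak n}_j$ to $\mathfrak n_j$, and invokes the general fact (cited from Tauvel--Yu) that a nilpotent Lie algebra is the Lie algebra of a Zariski closed subgroup, then verifies $H\cap K=\{e\}$ and $HK=Z_i$ by a dimension-and-unipotency argument. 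You stay entirely at the group level and use the more elementary characteristic-zero fact that the unipotent part $V_j$ is a vector group in which any connected closed subgroup is a linear subspace, so a complement exists and is automatically closed; this makes $H\cap K=\{e\}$ and $HK=Z_i$ immediate factor by factor. Your closing remark — that an arbitrary Lie-algebra complement to $\mathrm{Lie}(Stab(x))$ need not exponentiate to a closed subgroup, as the $\Ct\times\C$ example shows, which is why the block-diagonal description of $Stab(x)$ from Lemma \ref{l:stab} is essential — is a genuine clarification of why the argument works; the paper implicitly relies on this but does not spell it out. One small imprecision: the relevant fact is that a connected closed subgroup of a vector group over a field of characteristic zero is a linear subspace; your phrasing via morphisms $\mathbb{G}_a\to\mathbb{G}_a$ points in the right direction but isn't quite the statement being used.
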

	\begin{proof}
	For the purposes of this proof we denote by $H$ the group $Stab(x)$.  Without loss of generality, we assume $H$ is as given in (\ref{eq:stab}). Let $\fz_{i}=Lie(Z_{i})$ and let $\mathfrak{h}=Lie(H)$.  Now, by Lemma \ref{l:stab}, $\mathfrak{h}$
\begin{equation}\label{eq:liestab}
\mathfrak{h}=\bigoplus_{j=1}^{q}\mathfrak{z}_{J_{\lambda_{j}}}\oplus\bigoplus_{j=q+1}^{r}\mathfrak{n}_{j}, 
\end{equation}
where $\mathfrak{z}_{J_{\lambda_{j}}}$ is the Lie algebra of the abelian algebraic group $Z_{J_{\lambda_{j}}}$ and $\mathfrak{n}_{j}=Lie(U_{j})$ is a Lie subalgebra of $\mathfrak{n}^{+}(n_{j})$, the strictly upper triangular matrices in $\fgl(n_{j})$.   %For convenience we denote $\mathfrak{z}_{J_{\lambda_{j}}}$ by $\mathfrak{z}_{j}$ and $Z_{J_{\lambda_{j}}}$ by $Z_{j}$.   

The proof proceeds in two steps.  We first find an algebraic Lie subalgebra $\fk\subset\fz_{i}$ such that $\fz_{i}=\fh\oplus\fk$ as Lie algebras.   We then show that if $K\subset Z_{i}$ is the corresponding Zariski closed subgroup $Z_{i}=H\, K$ and $H\cap K=\{e\}$.  To find $\fk$, consider the abelian Lie algebra $\fz_{J_{\lambda_{j}}}$ for $q+1\leq j\leq r$.  Since $\fz_{J_{\lambda_{j}}}$ is abelian, it has a Jordan decomposition as a direct sum of Lie algebras $\fz_{J_{\lambda_{j}}}=\fz_{J_{\lambda_{j}}}^{ss}\oplus\fz_{J_{\lambda_{j}}}^{n}$, where $\fz_{J_{\lambda_{j}}}^{ss}$ are the semisimple elements of $\fz_{J_{\lambda_{j}}}$ and $\fz_{J_{\lambda_{j}}}^{n}$ are the nilpotent elements.   Now the Lie algebra $\fn_{j}$ in (\ref{eq:liestab}) is a subalgebra of $\fz_{J_{\lambda_{j}}}^{n}$.  Take $\widetilde{\fn_{j}}$ so that $\fz_{J_{\lambda_{j}}}^{n}=\fn_{j}\oplus \widetilde{\fn_{j}}$.  Let 
$$
\fm_{j}=\fz_{J_{\lambda_{j}}}^{ss}\oplus\widetilde{\fn_{j}}.
$$
Note that $\fm_{j}\oplus\fn_{j}=\fz_{J_{\lambda_{j}}}$.  We claim that $\fm_{j}$ is an algebraic subalgebra of $\fz_{J_{\lambda_{j}}}$.  Indeed, $\widetilde{\fn_{j}}$ is algebraic, since it is a nilpotent Lie algebra (see \cite{TY}, pg 383).   Let $\widetilde{N_{j}}$ be the corresponding algebraic subgroup.  Then $M_{j}=\Co^{\times}\times \widetilde{N_{j}}$ has $Lie(M_{j})=\mathfrak{m}_{j}$, as $\Ct$ is the semisimple part of group $Z_{J_{\lambda_{j}}}$ (see (\ref{eq:bigmatrix})).  We then take 
 $$
 \fk=\bigoplus_{j=q+1}^{r}\fm_{j}.
 $$
 This finishes the first step.  
 
 Let $K=\prod_{j=q+1}^{r} M_{j}$ be the Zariski closed, connected algebraic subgroup of $\prod_{j=q+1}^{r} Z_{J_{\lambda_{j}}}$ that corresponds to the algebraic Lie algebra $\fk$.  We now show that $Z_{i}=H\times K$.  By our choice of $K$, $H\cap K$ is finite.  But we also have, $H\cap K\subset \prod_{j=q+1}^{r} U_{j}$ and is thus unipotent (see (\ref{eq:stab})).  Since any unipotent group must be connected, we have $H\cap K=\{e\}$.  Now, it is clear that $Z_{i}=H\, K$, as $H\, K$ is a closed, connected subgroup of $Z_{i}$ of dimension $\dim Z_{i}$.    This completes the proof.  
	\end{proof}
	 
	 With Theorem \ref{thm:split} in hand, we can define the general $\Gamman$ morphism of (\ref{eq:gengamma}), as we did in the strongly regular case.  Now, suppose we are given $Z_{i}$-orbits in $\Xi^{i}_{c_{i},c_{i+1}}$,  $\mathcal{O}_{a_{i}}^{i}=K_{a_{i}}\cdot x_{a_{i}}\simeq K_{a_{i}}$ with $K_{a_{i}}$ as in Theorem \ref{thm:split} for $1\leq i\leq n-1$, and with $\mathcal{O}_{a_{i}}^{i}$ consisting of regular elements of $\fgl(i+1)$ for $1\leq i\leq n-2$.  We define a morphism
	\begin{equation*}
	\Gamma_{n}^{a_{1},\cdots, a_{n-1}}: K_{a_{1}}\times\cdots\times K_{a_{n-1}}\rightarrow \fgl(n)_{c}\cap S,
\end{equation*}
as in equation (\ref{eq:defn}). 

Propositions \ref{r:sense} and \ref{prop:Aacts}, Theorem \ref{thm:embedd}, and Remark \ref{r:different} from the strongly regular case remain valid in this case by simply replacing the groups $Z_{i}$ by the groups $K_{a_{i}}$.  We recall that the main ingredient in proving Theorem \ref{thm:embedd} is the fact that the group $Z_{i}$ acts freely on $\orbi$.  The analogue of Theorem \ref{thm:Aorb} remains valid in this, as it is easy to show 
$$
T_{y}(Im\Gamman)=V_{y},
$$
with $V_{y}$ as in (\ref{eq:dist}).

 We obtain at last Theorem \ref{thm:gengamma}.  
\begin{thm}
The image of the map $\Gamma_{n}^{a_{1},a_{2}, \cdots , a_{n-1}}$ is exactly one $A$-orbit in $\fgl(n)_{c}\cap S$.  Moreover, every $A$-orbit in $\fgl(n)_{c}\cap S$ is of the form $Im\Gamman$ for some choice of orbits $\orbi\subset\sol$, with $\orbi$ consisting of regular elements of $\fgl(i+1)$ for $1\leq i\leq n-2$.  
\end{thm}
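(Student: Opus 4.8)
\emph{Proof strategy.}
The plan is to transport, essentially verbatim, the machinery developed in the strongly regular case, replacing everywhere the group $Z_{i}$ by the connected, Zariski closed subgroup $K_{a_{i}}\subset Z_{i}$ furnished by Theorem \ref{thm:split} applied to the base point $x_{a_{i}}\in\orbi$. By that theorem $K_{a_{i}}$ acts freely on $\orbi$, with $\orbi=\Ad(K_{a_{i}})\cdot x_{a_{i}}\simeq K_{a_{i}}$; this is precisely the property used in the strongly regular case to make the section $y\mapsto g_{i,i+1}(y)$ of (\ref{eq:gmatrix}) and hence the morphism $\Gamman$ of (\ref{eq:defn}), (\ref{eq:gengamma}) well defined, so these definitions now make sense on $K_{a_{1}}\times\cdots\times K_{a_{n-1}}$. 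A preliminary observation is that $Im\Gamman\subset\glfibre\cap S$: by construction the $(j+1)\times(j+1)$ cutoff of any point of $Im\Gamman$ has characteristic polynomial $p_{c_{j+1}}(t)$ and is conjugate to $x_{a_{j}}$, which is regular for $j\leq n-2$, while the $(1,1)$ entry is $c_{1}$; thus all proper cutoffs are regular.

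I would then show $Im\Gamman$ is a single $A$-orbit in four steps, each the literal analogue of an already-proved statement. (i) Proposition \ref{r:sense} holds: its proof uses only $\orbi=\Ad(GL(i))\cdot x_{a_{i}}\cap\sol$ and the cutoff recursion of Remark \ref{r:stupid}, neither of which is affected by shrinking $Z_{i}$ to $K_{a_{i}}$; so again $Im\Gamman=\{x\in\fgl(n):x_{i+1}\in\Ad(GL(i))\cdot x_{a_{i}},\; 1\leq i\leq n-1\}$, a locally closed subvariety. (ii) Theorem \ref{thm:embedd} holds: its proof builds the inverse $\Psi$ using only freeness of the acting group on each orbit, so $\Gamman$ is an isomorphism onto its image, and $Im\Gamman$ is a smooth, irreducible, quasi-affine subvariety of dimension $\sum_{i=1}^{n-1}\dim K_{a_{i}}$ (in general strictly below ${n\choose 2}$). (iii) Proposition \ref{prop:Aacts} holds: its proof invokes only the description in (i), so $A$ preserves $Im\Gamman$; hence $A\cdot y\subset Im\Gamman$ for $y\in Im\Gamman$, and therefore $V_{y}=T_{y}(A\cdot y)\subset T_{y}(Im\Gamman)$. (iv) Differentiating $\Gamman$ as in (\ref{eq:li})--(\ref{eq:finaldiff}) gives $T_{y}(Im\Gamman)=span\{\partial_{y}^{[\gamma,y]}:\gamma\in\Ad(l_{i})\cdot Lie(K_{a_{i}}),\; 1\leq i\leq n-1\}$; since $Lie(K_{a_{i}})\subset Lie(Z_{i})$ we get $\Ad(l_{i})\cdot Lie(K_{a_{i}})\subset\fz_{\fgl(i)}(y_{i})$, and as $y_{i}$ is regular for $i\leq n-1$ this sits inside $Z_{y_{i}}$, so $T_{y}(Im\Gamman)\subset V_{y}$. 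Combined with (iii), this yields $T_{y}(Im\Gamman)=V_{y}$, so $A\cdot y$ is open in the connected manifold $Im\Gamman$ and hence equals it.

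For the converse, let $x\in\glfibre\cap S$. Since $x_{i}$ is regular for $1\leq i\leq n-1$, Remark \ref{r:stupid} provides $g_{i}\in GL(i)$ with $z_{i}:=\Ad(g_{i})\cdot x_{i+1}\in\sol$; put $x_{a_{i}}:=z_{i}$ and $\orbi:=\Ad(Z_{i})\cdot x_{a_{i}}$, which for $i\leq n-2$ consists of regular elements since $x_{i+1}$ is then regular. With $K_{a_{i}}$ as in Theorem \ref{thm:split} we have $\orbi=\Ad(K_{a_{i}})\cdot x_{a_{i}}$ with a free action, so the corresponding $\Gamman$ is defined, and $x_{i+1}=\Ad(g_{i}^{-1})\cdot x_{a_{i}}\in\Ad(GL(i))\cdot x_{a_{i}}$ for each $i$, whence $x\in Im\Gamman$ by (i). Since $Im\Gamman$ is a single $A$-orbit, $A\cdot x=Im\Gamman$, of the asserted form.

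The step that needs care is less any single computation than the legitimacy of the transport: one must verify that each of Proposition \ref{r:sense}, Theorem \ref{thm:embedd}, Proposition \ref{prop:Aacts} and the tangent-space part of Theorem \ref{thm:Aorb} uses only the freeness of the acting group, and not some feature special to $Z_{i}$ (its dimension being $i$, or regularity of $\orbi$). Once that is granted, the identity $T_{y}(Im\Gamman)=V_{y}$ — in the strongly regular case read off from a dimension count against the Lagrangian dimension ${n\choose 2}$, but here needed with both sides possibly proper — follows from (iii) together with the elementary inclusion $\Ad(l_{i})\cdot Lie(K_{a_{i}})\subset Z_{y_{i}}$, so it is not in fact the hard part; the real work of the theorem sits in Theorem \ref{thm:split}. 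Finally, in contrast with Theorem \ref{thm:regorbits}, the hypothesis ``$\orbi\subset\fgl(i+1)^{reg}$ for $i\leq n-2$'' genuinely must be imposed by hand here, since freeness of the $K_{a_{i}}$-action no longer forces regularity.
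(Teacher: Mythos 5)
Your proof is correct and takes essentially the paper's own route: transport the strongly-regular constructions (Proposition \ref{r:sense}, Theorem \ref{thm:embedd}, Proposition \ref{prop:Aacts}) by replacing $Z_{i}$ with the free-acting subgroup $K_{a_{i}}$ furnished by Theorem \ref{thm:split}, and then establish $T_{y}(Im\Gamman)=V_{y}$ so that each $A$-orbit is open in the connected manifold $Im\Gamman$. Your two-sided verification of that equality (one inclusion from the tangent-space computation together with $\Ad(l_{i})\cdot Lie(K_{a_{i}})\subset Z_{y_{i}}$ for regular $y_{i}$, the other from the $A$-invariance of $Im\Gamman$ via the Proposition \ref{prop:Aacts} analogue) supplies precisely the detail the paper suppresses behind ``it is easy to show,'' and correctly identifies that here, unlike the strongly-regular case where $\Ad(l_{i})\cdot Lie(Z_{i})$ is all of $Z_{y_{i}}$, the inclusion $V_{y}\subset T_{y}(Im\Gamman)$ genuinely needs the $A$-invariance rather than falling out of the differential of $\Gamman$ alone.
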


%For $A$-orbits of dimension less than ${n\choose 2}$, note that we do not obtain $\Gamman$ as an algebraic isomorphism to the $A$-orbit nor do we get an algebraic action of the group $K_{a_{1}}\times\cdots\times K_{a_{n-1}}$ on $\fgl(n)_{c}\cap S$.  In this case, we can only make assertions about the action of $K_{a_{1}}\times\cdots\times K_{a_{n-1}}$ being holomorphic on $Im\Gamman$.
	
	The following corollary of Thoerem \ref{thm:gengamma} is a generalization of Theorem 3.14 in \cite{KW1} to include elements that are not necessarily strongly regular.  
	 \begin{cor}
	 Let $x\in\glfibre\cap S$.  The $A$-orbit of $x$, $A\cdot x$ is a smooth, irreducible subvariety of $\fgl(n)$ that is isomorphic as an algebraic variety to a closed subgroup $K_{a_{1}}\times\cdots\times K_{a_{n-1}}$ of the connected algebraic group $\Gprod$.  
	 \end{cor}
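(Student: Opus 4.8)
The plan is to read the corollary off Theorem~\ref{thm:gengamma} together with the extensions of Proposition~\ref{r:sense}, Proposition~\ref{prop:Aacts}, and Theorem~\ref{thm:embedd} to the setting of $\glfibre\cap S$ that were described just before Theorem~\ref{thm:gengamma}. First I would fix $x\in\glfibre\cap S$; then $c=\Phi(x)=(c_{1},\dots,c_{n})\in\Co^{1}\times\cdots\times\Co^{n}$ is determined, and with it the solution varieties $\sol$ for $1\le i\le n-1$. By Theorem~\ref{thm:gengamma}, the orbit $A\cdot x$ equals $Im\Gamman$ for some choice of $Z_{i}$-orbits $\orbi=K_{a_{i}}\cdot x_{a_{i}}\subset\sol$, where $K_{a_{i}}$ is the connected, Zariski closed complement to $Stab(x_{a_{i}})$ in $Z_{i}$ furnished by Theorem~\ref{thm:split}, and where $\orbi$ consists of regular elements of $\fgl(i+1)$ for $1\le i\le n-2$. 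In particular $A\cdot x$ is the image of the morphism
$$
\Gamman : K_{a_{1}}\times\cdots\times K_{a_{n-1}}\longrightarrow \glfibre\cap S
$$
constructed in Section~\ref{s:gengamma}.

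Next I would invoke the analogue of Theorem~\ref{thm:embedd} valid in this case: because $K_{a_{i}}$ acts freely on $\orbi$, the inverse morphism $\Psi=(\psi_{1},\dots,\psi_{n-1})$ is built exactly as in the strongly regular case, with $Z_{i}$ replaced by $K_{a_{i}}$, so $\Gamman$ is an isomorphism of algebraic varieties onto its image. Hence $A\cdot x\cong K_{a_{1}}\times\cdots\times K_{a_{n-1}}$ as algebraic varieties. Each $K_{a_{i}}$ is a connected algebraic group by Theorem~\ref{thm:split}, so $K_{a_{1}}\times\cdots\times K_{a_{n-1}}$ is a connected algebraic group, hence a smooth, irreducible variety, and these properties pass to $A\cdot x$. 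Moreover each $K_{a_{i}}$ is Zariski closed in $Z_{i}$, so the product is a Zariski closed subgroup of $\Gprod$, which is a connected algebraic group because it is a product of the connected centralizers $Z_{i}$ (see \cite{K}). Together these statements are exactly the assertions of the corollary; when $x$ is strongly regular one has $K_{a_{i}}=Z_{i}$ for all $i$, recovering Theorem~3.14 in \cite{KW1}.

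The only substance beyond this bookkeeping --- and the step I would treat most carefully --- is verifying that Propositions~\ref{r:sense} and~\ref{prop:Aacts} and the inverse-map construction of Theorem~\ref{thm:embedd} genuinely survive the passage from $Z_{i}$ to $K_{a_{i}}$. The key point is that each of those arguments used only two features of $Z_{i}$: that it acts freely on the chosen orbit $\orbi$, and that $Im\Gamman$ is locally closed in $\fgl(n)$, the latter coming from the projections $\pi_{i+1}(x)=x_{i+1}$ and being independent of any regularity or maximal-dimension hypothesis. Since $K_{a_{i}}$ acts freely on $\orbi\simeq K_{a_{i}}$ by construction, every step transfers verbatim, and the corollary follows.
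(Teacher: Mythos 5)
Your proposal is correct and follows exactly the route the paper intends: the corollary is stated without proof precisely because it is an immediate consequence of Theorem~\ref{thm:gengamma} together with the remark preceding it that Propositions~\ref{r:sense} and~\ref{prop:Aacts} and Theorem~\ref{thm:embedd} carry over verbatim once $Z_{i}$ is replaced by $K_{a_{i}}$, which is what you invoke. Your identification of the two properties actually used (free action of $K_{a_{i}}$ on $\orbi$, and local closedness of $Im\Gamman$ via the projections $\pi_{i+1}$) is exactly the paper's own justification for the transfer, so the argument matches.
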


	 \section{Counting $A$-orbits in $\glsfibre$ }\label{s:counting}
	Using Theorem \ref{thm:Aorb}, we can count the number of $A$-orbits in $\glsfibre$ for any $c\in\mathbb{C}^{\frac{n(n+1)}{2}}$ and explicitly describe the orbits.  From Theorem \ref{thm:Aorb} and Remark \ref{r:different} counting the number of $A$-orbits in $\glsfibre$ is equivalent to counting the number of $Z_{i}$-orbits in $\sol$ on which $Z_{i}$ acts freely.  We show in this section that the number of such orbits is directly related to the number of degeneracies in the roots of the monic polynomials $p_{c_{i}}(t)$ and $p_{c_{i+1}}(t)$ (see (\ref{eq:polyci})).  The study of this problem can be reduced to studying the structure of nilpotent solution varieties $\Xi^{i}_{0,0}$.  Thus, we begin our discussion by describing the $A$-orbit structure of the nilfibre $\glsnil$.

	\subsection{Nilpotent solution varieties and $A$-orbits in the nilfibre} 
In this section, we study strongly regular matrices in the fibre $\glnil$. By definition $x\in\glnil$ if and only if $x_{i}\in\fgl(i)$ is nilpotent for all $i$.  Such matrices have been studied by \cite{Ov} and \cite{PS}.   %The second reference establishes some of the same results that we obtain below for these matrices in the strongly regular case. 

	We restate Definition \ref{def:sol} of the solution variety $\sol$ in this case.  %We start off by analyzing the structure of solution varieties in the case of nilpotents, as this is the case that naturally generalizes to arbitrary regular matrices.   
Elements of $\fgl(i+1)$ of the form
\begin{equation}\label{eq:nilform}
X=\left[\begin{array}{cc}
\begin{array}{cccc}
0& 1 & \cdots & 0\\
0& 0&\ddots &\vdots\\
\vdots&\;& \ddots & 1\\
0& \cdots &\cdots &0\\
\end{array} & \begin{array}{c}
y_{1}\\
\vdots\\
\vdots\\
y_{i}\end{array}\\
\begin{array}{cccc}
z_{1}&\cdots&\cdots&z_{i}
\end{array} & w
\end{array}\right]
\end{equation}
 which are nilpotent define the nilpotent solution variety at level $i$, which we denote by $\nsol$.  In this case, it is easy to write down elements in $\nsol$.  For example, we can take all of the $z_{j}$, $y_{j}$, and $w$ to be $0$.   However, such an element is not regular, and so cannot be used to construct a $\Gamman$ mapping that gives rise to a strongly regular orbit in $\fgl_{0}^{sreg}(n)$.  To describe $A$-orbits in $\glsnil$, we focus our attention on free $Z_{i}$-orbits in $\nsol$, (see Theorem \ref{thm:Aorb}).  To find such orbits, we need to compute the characteristic polynomial of $X$.  
\begin{prop}\label{prop:det2}
The characteristic polynomial of the matrix in (\ref{eq:nilform}) is 
\begin{equation}\label{eq:nilpoly}
\begin{array}{c}
\det(X-t)=(-1)^{i}\left[-t^{i+1}+wt^{i}+\sum_{l=0}^{i-1}\sum_{j=1}^{i-l}z_{j}y_{j+l} t^{i-1-l}\right]. \end{array}
 \end{equation}
\end{prop}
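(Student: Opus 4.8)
The idea is to expand the determinant along the last row and the last column, exploiting the very special shape of the matrix in $(\ref{eq:nilform})$: its top-left $i \times i$ block is a single nilpotent Jordan block $N$ with $1$'s on the superdiagonal, the last column above $w$ is the vector $\underline{y} = (y_1, \ldots, y_i)^T$, the last row left of $w$ is $\underline{z} = (z_1, \ldots, z_i)$, and the $(i+1,i+1)$ entry is $w$. So $X - t = \left[\begin{smallmatrix} N - t I_i & \underline{y} \\ \underline{z} & w - t \end{smallmatrix}\right]$. I would use the standard Schur-type cofactor formula for a bordered matrix: expanding along the last row and column gives
\begin{equation*}
\det(X - t) = (w - t)\det(N - t I_i) - \underline{z}\,\operatorname{adj}(N - t I_i)\,\underline{y},
\end{equation*}
where $\operatorname{adj}$ is the adjugate (classical adjoint). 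Since $N$ is the regular nilpotent Jordan block, $\det(N - tI_i) = (-t)^i = (-1)^i t^i$, which already accounts for the $(-1)^i[-t^{i+1} + w t^i]$ part once we distribute $(w-t)$.

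The main computational step is therefore identifying $\operatorname{adj}(N - tI_i)$, equivalently the entries of $(N - tI_i)^{-1}$ scaled by $\det(N - tI_i)$. Because $N - tI_i$ is upper triangular Toeplitz with diagonal entry $-t$ and superdiagonal entry $1$, its inverse is again upper triangular Toeplitz; a direct check (or a geometric-series expansion $(N - tI_i)^{-1} = -t^{-1}(I - t^{-1}N)^{-1} = -\sum_{k \ge 0} t^{-k-1} N^k$, truncated since $N^i = 0$) shows that the $(p,q)$ entry of $(N - tI_i)^{-1}$ is $-t^{-(q-p)-1}$ for $q \ge p$ and $0$ otherwise. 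Multiplying by $\det(N - tI_i) = (-1)^i t^i$, the $(p,q)$ entry of $\operatorname{adj}(N - tI_i)$ is $(-1)^{i+1} t^{\,i-1-(q-p)}$ for $q \ge p$. Hence
\begin{equation*}
\underline{z}\,\operatorname{adj}(N - tI_i)\,\underline{y} = (-1)^{i+1}\sum_{p \le q} z_p\, y_q\, t^{\,i-1-(q-p)} = (-1)^{i+1}\sum_{l=0}^{i-1}\Big(\sum_{j=1}^{i-l} z_j y_{j+l}\Big) t^{\,i-1-l},
\end{equation*}
reindexing with $l = q - p$ and $j = p$. Substituting both pieces back into the bordered-determinant formula and factoring out $(-1)^i$ yields exactly $(\ref{eq:nilpoly})$.

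The only point needing a little care — the part I'd expect to be the mild obstacle — is justifying the adjugate computation cleanly without dividing by $t$ (the formula must hold as a polynomial identity in $t$, including at $t = 0$). I would handle this either by noting both sides are polynomials in $t$ agreeing for all $t \ne 0$, hence identically; or, more elementarily, by computing $\operatorname{adj}(N - tI_i)$ directly as the transpose of the cofactor matrix, observing that deleting row $q$ and column $p$ (with $q \ge p$) from $N - tI_i$ leaves a block-triangular matrix with one block $N - tI$ of size $p-1$, one block $N - tI$ of size $i - q$, and an identity-type block coming from the surviving superdiagonal $1$'s, giving cofactor $(-1)^{\text{sign}} t^{(p-1) + (i-q)} = (-1)^{i+1} t^{i-1-(q-p)}$ after tracking the sign $(-1)^{p+q}$. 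Either route is routine; everything else is bookkeeping with the double sum.
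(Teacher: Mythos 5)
Your proposal is correct and follows essentially the same route as the paper: both expand via the bordered-determinant (Schur complement) formula $\det(X-t) = (w-t)\det(J-tI_i) - \underline{z}\,\operatorname{adj}(J-tI_i)\,\underline{y}$, identify $\operatorname{adj}(J-tI_i)$ as the upper-triangular Toeplitz matrix with $(p,q)$ entry $(-1)^{i-1}t^{\,i-1-(q-p)}$, and reindex the resulting double sum. The only cosmetic difference is that the paper simply displays the adjugate matrix while you derive it (via the truncated geometric series or the cofactor route), and your sign $(-1)^{i+1}$ agrees with the paper's $(-1)^{i-1}$.
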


%The proof of this proposition is a computation in linear algebra (see Appendix A below). 
\begin{proof}
We compute the characteristic polynomial for the matrix in (\ref{eq:nilform}) using the Schur complement formula for the determinant (see \cite{HJ}, pgs 21-22).  In the notation of that reference $\alpha=\{1,\cdots, n-1\}$ and $\alpha^{\prime}=\{n\}$.  Let $J=X_{i}$ denote the principal nilpotent Jordan block.  Then the Schur complement formula in \cite{HJ} gives
\begin{equation}\label{eq:Schur}
\det(X-t)=\det(J-t)\,(w-t)-\underline{z}\; adj(J-t)\;\underline{y},
\end{equation}
where $adj(J-t)\in\fgl(i)$ denotes the classical adjoint matrix, $\underline{z}=[z_{1},\cdots, z_{i}]$ is a row vector, and $\underline{y}=[y_{1},\cdots , y_{i}]^{T}$ is a column vector.  We easily compute that $\det(J-t)=(-1)^{i} t^{i}$.  It is not difficult to see that 
$$
adj(J-t)=(-1)^{i-1}\left[\begin{array}{cccccc} t^{i-1} & t^{i-2} &\cdots & \cdots & t &1\\
												  0        & t^{i-1} & t^{i-2} &\cdots & \cdots & t\\
												  \vdots& 0 & t^{i-1} & \ddots & & \vdots\\
												  & &0 & \ddots & \ddots &\vdots \\
												  \vdots & & &\ddots & \ddots & t^{i-2}\\
												  0 &\cdots & &\cdots & 0 & t^{i-1}\end{array}\right]_{\mbox{\large .}}
$$
Now, we compute that the coefficient of $t^{i-1-l}$ for $0\leq l\leq i-1$ in the product $\underline{z}\; adj(J-t)\;\underline{y}^{T}$ is 
\begin{equation}\label{eq:coeffj}
(-1)^{i-1}\sum_{j=1} ^{i-l} z_{j} y_{j+l}.
\end{equation}
Summing up the terms in (\ref{eq:coeffj}) for $0\leq l\leq i-1$ and using equation (\ref{eq:Schur}), we obtain the polynomial in (\ref{eq:nilpoly}).  
\end{proof}
   
    For the matrix in (\ref{eq:nilform}) to be nilpotent, we require that all of the coefficients of the polynomial in (\ref{eq:nilpoly}) (excluding the leading coefficient) vanish. 

\begin{equation}\label{eq:nilcond}
\begin{array}{c}
z_{1}y_{i}=0\\
\\
z_{1} y_{i-1}+ z_{2} y_{i}=0\\

\vdots \\

z_{1}y_{1}+\cdots + z_{1} y_{i}=0\end{array}
\end{equation}

We claim that $\Xi_{0,0}^{i}$ has exactly two free $Z_{i}$-orbits.  These correspond to choosing either $z_{1}\in\Ct, \,y_{i}=0 $, or $y_{i}\in\Ct, \, z_{1}=0$ in the first equation of (\ref{eq:nilcond}).  We claim that any point in $\nsol$ with $z_{1}\neq 0$ is in
\begin{equation}\label{eq:lower}
  \mathcal{O}_{L}^{i}=\left[\begin{array}{cc}
\begin{array}{cccc}
0& 1 & \cdots & 0\\
0& 0&\ddots &\vdots\\
\vdots&\;& \ddots & 1\\
0& \cdots &\cdots &0\\
\end{array} & \begin{array}{c}
0\\
\vdots\\
\vdots\\0
\end{array}\\
\begin{array}{cccc}
z_{1}&\cdots&\cdots&z_{i}
\end{array} & 0
\end{array}\right]_{\mbox{\large ,}}
\end{equation}
with $ z_{j}\in\mathbb{C}\, , 2\leq j\leq i$.
%(This set of solutions can be easily found by using Proposition \ref{prop:det2} and solving the appropriate system of equations by back substitution.  One need only recall that a matrix is nilpotent if and only if all of the coefficients of its characteristic polynomial except the leading coefficient vanish.) 
Any point in $\nsol$ with $y_{i}\in\Ct$ is in
\begin{equation}\label{eq:upper}
\mathcal{O}_{U}^{i}=\left[\begin{array}{cc}
\begin{array}{cccc}
0& 1 & \cdots & 0\\
0& 0&\ddots &\vdots\\
\vdots&\;& \ddots & 1\\
0& \cdots &\cdots &0\\
\end{array} & \begin{array}{c}
y_{1}\\
\vdots\\
\vdots\\y_{i}
\end{array}\\
\begin{array}{cccc}
0&\cdots&\cdots&0
\end{array} & 0
\end{array}\right]_{\mbox{\large ,}}
\end{equation}
with $ y_{j}\in\mathbb{C}\, , 1\leq j\leq i-1$.  To verify this claim, note that if $z_{1}\neq 0$ and $y_{i}=0$, then $y_{1}=0,\, y_{2}=0,\cdots , y_{i-1}=0$ by successive use of equations (\ref{eq:nilcond}).  The case $y_{i}\neq 0$, $z_{1}=0$ is similar.   An easy computation in linear algebra, as in the proof of Lemma \ref{l:stab} gives that $Z_{i}$ acts freely on $\orbu$ and $\orbl$.  We think of $\orbu$ as the ``upper orbit " in $\nsol$ and $\orbl$ as the ``lower orbit".  Both orbits consist of regular elements of $\fgl(i+1)$ by Theorem \ref{thm:regorbits}.

Now, suppose that both $z_{1}=0=y_{i}$ in (\ref{eq:nilcond}).  It is easy to see that such an element has a non-trivial isotropy group in $Z_{i}$ containing the one dimensional subgroup of matrices
$$
\left[\begin{array}{cccc}
1& 0 & \cdots & c\\
0& 1&\ddots &\vdots\\
\vdots&\text{ }& \ddots & 0\\\
0& \cdots &\cdots &1\\
\end{array}\right ]_{\mbox{\large ,}}
$$
with $c\in\Ct$.  It does not belong to a $Z_{i}$-orbit of dimension $i$.  %Thus, any $Z_{i}$-orbit containing such a point in $\Xi^{i}_{0,0}$ is of dimension strictly less than $i$.  From our work in Chapter \ref{ch:gamma}, particularly Theorem \ref{thm:sregorbits}, such a $Z_{i}$-orbit cannot be used to construct a $\Gamma_{n}$ map whose image is an $A$-orbit in $M_{0}^{sreg}(n)$.  

Thus, to analyze $\fgl_{0}^{sreg}(n)$, we consider only the $Z_{i}$-orbits $ \mathcal{O}_{U}^{i}\, , \mathcal{O}_{L}^{i}$.  Using the orbits $\mathcal{O}_{U}^{i}\, , \mathcal{O}_{L}^{i}$, we can construct $2^{n-1}$ morphisms of the form $\Gamma_{n}^{a_{1},a_{2}, \cdots , a_{n-1}}$ where $a_{i}=\mathcal{O}_{U}^{i}\, , \mathcal{O}_{L}^{i}$ for $1\leq i\leq n-1$.  The following result follows immediately from Theorems \ref{thm:Aorb} and \ref{thm:sregact} and Remark \ref{r:different}.

%By Theorem \ref{thm:Aorb}, the image of each such mapping gives an $A$-orbit in $\fgl_{0}^{sreg}(n)$.  
%By Proposition \ref{r:sense} it follows that the images of the different $\Gamma_{n}^{a_{1},\cdots, a_{n-1}}$ mappings are disjoint.  Thus, combining the results of Theorem \ref{thm:Aorb} and Theorem \ref{thm:sregact} we obtain the following theorem.

\begin{thm}\label{thm:nil}
The nilfibre $\glsnil$ contains $2^{n-1}$ $A$-orbits.  On $\glsnil$ the orbits of $A$ are orbits of a free action of the algebraic group $(\Ct)^{n-1}\times\C^{{n\choose 2}-n+1}$.
\end{thm}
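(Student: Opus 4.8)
The plan is to deduce both assertions directly from the structure theory developed in Section \ref{s:gamma}, since essentially all the work has already been done. By Theorem \ref{thm:Aorb} together with Remark \ref{r:different}, the $A$-orbits in $\glsnil=\fgl^{sreg}_0(n)$ are in bijection with tuples $(\mathcal{O}^1_{a_1},\dots,\mathcal{O}^{n-1}_{a_{n-1}})$ of $Z_i$-orbits in the nilpotent solution varieties $\nsol$ on which $Z_i$ acts freely, the orbit attached to such a tuple being $Im\Gamman$; and Theorem \ref{thm:regorbits} guarantees that a free $Z_i$-orbit automatically lies in $\fgl(i+1)^{reg}$, so that no extra regularity hypothesis needs to be imposed. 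Thus counting $A$-orbits in $\glsnil$ reduces to counting, for each $i$, the free $Z_i$-orbits in $\nsol$.

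The second input is the analysis of $\nsol$ carried out just above the theorem: using the defining equations (\ref{eq:nilcond}) one checks that an element of $\nsol$ with $z_1\neq 0$ must have $y_1=\cdots=y_i=0$, hence lies in the orbit $\orbl$ of (\ref{eq:lower}); symmetrically an element with $y_i\neq 0$ lies in $\orbu$ of (\ref{eq:upper}); and an element with $z_1=y_i=0$ is fixed by the one-parameter unipotent subgroup exhibited before the theorem, so it has positive-dimensional stabilizer and does not lie on a free $Z_i$-orbit. Hence $\nsol$ has exactly two free $Z_i$-orbits, $\orbl$ and $\orbu$. Since there are $n-1$ levels, the number of admissible tuples, and therefore the number of $A$-orbits in $\glsnil$, is $\prod_{i=1}^{n-1}2=2^{n-1}$.

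For the group statement I would invoke Theorem \ref{thm:sregact}: on $\glsnil$ the $A$-orbits are precisely the orbits of a free algebraic action of $Z=\Gprod$, where $Z_i$ is the centralizer in $GL(i)$ of the regular nilpotent Jordan block $J_i\in\fgl(i)$. It remains only to identify $Z$. The centralizer of a principal nilpotent element in $GL(i)$ is the group of invertible upper-triangular Toeplitz matrices, which is isomorphic to $\Ct\times\C^{i-1}$ (diagonal scalar part times unipotent Toeplitz part). Therefore
$$
Z=\prod_{i=1}^{n-1}Z_i\;\simeq\;\prod_{i=1}^{n-1}\bigl(\Ct\times\C^{\,i-1}\bigr)\;=\;(\Ct)^{n-1}\times\C^{\sum_{i=1}^{n-1}(i-1)}\;=\;(\Ct)^{n-1}\times\C^{{n\choose 2}-n+1},
$$
where the last equality uses $\sum_{i=1}^{n-1}(i-1)={n-1\choose 2}={n\choose 2}-n+1$. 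This is exactly the group in the statement, and the proof is complete.

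I do not expect a genuine obstacle here, as every ingredient is in hand. The one place requiring brief but real care is the claim that $\orbl$ and $\orbu$ exhaust the free $Z_i$-orbits in $\nsol$ — that is, that (\ref{eq:nilcond}) forces the vanishing patterns of (\ref{eq:lower})/(\ref{eq:upper}) and that the degenerate case $z_1=y_i=0$ always yields a nontrivial stabilizer — but this is the short linear-algebra computation of the same flavor as the proof of Lemma \ref{l:stab}, already sketched in the paragraphs preceding the theorem.
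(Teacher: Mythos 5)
Your proof is correct and takes essentially the same approach as the paper: both rely on Theorems \ref{thm:Aorb}, \ref{thm:regorbits}, \ref{thm:sregact} and Remark \ref{r:different} together with the computation that $\nsol$ has exactly the two free $Z_i$-orbits $\orbl$, $\orbu$. The only detail you add that the paper leaves implicit is the explicit identification $Z_i\simeq\Ct\times\C^{\,i-1}$ and the arithmetic $\sum_{i=1}^{n-1}(i-1)={n\choose 2}-n+1$, which is a welcome clarification but not a different argument.
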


The nilfibre has much more structure than Theorem \ref{thm:nil} indicates.   We can see this additional structure by considering an example of an $A$-orbit given as the image of  a morphism $\Gamman$ with $\orbi=\orbu, \, \orbl$ and its closure.   Closure here means either closure in the Zariski topology in $\fgl(n)$ or in the Euclidean topology, since $A$-orbits are constructible sets these two different types of closure agree (see Theorem 3.7 in \cite{KW1}).  For ease of notation, we will abbreviate from now on $\orbl=L$, $\orbu=U$.  

%\begin{exam}\label{ex:nil3}
%First, let us consider an example in the case $n=3$.  In this case $Z_{1}\times Z_{2}=\Co^{\times}\times\Co^{\times}\times \Co$.  We use as coordinates for $Z_{1}\times Z_{2}$, $(z_{1}, z_{2}, z_{3})$.  If we choose the lower orbit $L$ in the solution variety at each step to construct $\Gamma_{3}^{a_{1}, a_{2}}$, then we find that in these coordinates the image consists of elements of the form:
%$$
%\left[ \begin{array}{ccc} 
%0 & 0 & 0\\
%z_{1} & 0 & 0\\
%z_{1}z_{3} & z_{2} &0 \end{array}\right]
%$$
%The closure of the image is $\fn_{-}$, the set of nilpotent lower triangular matrices in $\fgl(3)$, since $z_{3}\in\Co$.   Similarly, if we consider the $A$-orbit given by the image of $\Gamma_{3}^{U, U}$, then its closure is $\fn_{+}$, the nilpotent upper triangular matrices.
%\end{exam}

%Let us consider a less trivial example in the case $n=4$.  
\begin{exam}\label{ex:nil4}
Let us take our $A$-orbit in $\fgl(4)_{0}^{sreg}$ to be the image of $\Gamma_{4}^{a_{1}, a_{2}, a_{3}}$ with $a_{1}=L, a_{2}=L, a_{3}=U$.  For coordinates, let us take for $\mathcal{O}^{1}_{L}$, $z_{1}\in\Ct$, for $\mathcal{O}^{2}_{L}$, $z_{2}\in\Ct,\, z_{3}\in\Co$, and for $\mathcal{O}^{3}_{U}$, $y_{1},\, y_{2}\in\Co,\, y_{3}\in\Ct$.  In these coordinates, we compute that $Im\Gamma^{L, L, U}$ is

\begin{equation}
Im\Gamma^{L, L, U}=\left[\begin{array}{cccc}
0 & 0 & 0 & \frac{y_{3}}{z_{1}z_{2}}\\
z_{1} & 0 & 0 & \frac{y_{2}}{z_{2}}- \frac{y_{3}z_{3}}{z_{2}^{2}}\\
z_{1}\, z_{3} & z_{2} & 0 & y_{1}\\
0 & 0 & 0& 0\end{array}\right]_{\mbox{\large .}}
\end{equation}
Since for $x\in\glsfibre$, $\overline{A\cdot x}$ is an irreducible variety of dimension ${n\choose 2}$ by Theorem 3.12 in \cite{KW1}, we compute the closure
\begin{equation}\label{eq:nilexample}
\overline{Im\Gamma^{L, L, U}}=\left[\begin{array}{cccc}
0 & 0 & 0 & a_{1}\\
a_{2} & 0 & 0 & a_{3}\\
a_{4} & a_{5} & 0 & a_{6}\\
0 & 0 & 0 &0\end{array}\right]_{\mbox{\large ,}}
\end{equation}
with $a_{i}\in \Co$ for $1\leq i\leq 6$.  $\overline{Im\Gamma^{L, L, U}}$ is a nilradical of a Borel subalgebra that contains the standard Cartan subalgebra of diagonal matrices in $\fgl(4)$.  The easiest way to see this is to note that the strictly lower triangular matrices in $\fgl(4)$ are conjugate to $\overline{Im\Gamma^{L, L, U}}$  by the permutation $\tau=(1432)$.  
\end{exam}

This example illustrates that the $A$-orbits in $\glsnil$ are essentially parameterized by prescribing whether or not the the $i\times i$ cutoff of an element $x\in\glnil$ has zeroes in its $i$th column or zeroes in its $i$th row.  This is because for an $x\in\glnil$ to be in the image of a morphism $\Gamman$ with $a_{i}=L,\, U$, the $i$th row or the $i$th column of $x_{i}$ must entirely consist of zeroes for each $i$ by Proposition \ref{r:sense}. 

Contrast this with the following example of a matrix $x\in\glnil$ each of whose cutoffs is regular, but that is not strongly regular.
\begin{exam}\label{ex:nsreg}
Consider $x\in\fgl(4)_{0}$
\begin{equation}
x=\left[ \begin{array}{cccc}
	0 & 0& 0 & 0\\
	1 & 0 & 0 & x_{2}\\
	0 & 1 & 0 & x_{3}\\
	y_{1}& 0 & 0 & 0\end{array}\right ]_{\mbox{\large ,}}
	\end{equation}
	where $x_{2}\in\mathbb{C}^{\times},\, y_{1}\in\mathbb{C}^{\times}\text{, and } x_{3}\in\mathbb{C} $.   Note that both the $4$th column and row of this matrix have non-zero entries. Thus, this matrix cannot be in the image of a morphism $\Gamman$ with $a_{i}=L, U$ and is not strongly regular.  However, one can easily check that each cutoff of this matrix is regular so that $x\in \fgl(4)_{0}\cap S$.  Thus, $\fgl(4)_{0}^{sreg}$ is a proper subset of $\fgl(4)_{0}\cap S$.  (One can also see that this matrix is not strongly regular directly by observing that $\fz_{\fgl(3)}(x_{3})\cap \fz_{\fgl(4)}(x)\neq 0$. ) 	
\end{exam}

Example \ref{ex:nil4} demonstrates that although the $A$-orbits $Im\Gamman$ may be complicated, their closures are relatively simple.  In this example, the closure is a nilradical of a Borel subalgebra that contains the standard Cartan subalgebra of diagonal matrices in $\fgl(n)$.  This is in fact the case in general.  

\begin{thm}\label{thm:nilradical}
Let $x\in\glsnil$ and let $A\cdot x$ denote that $A$-orbit of $x$.  Then $\overline{A\cdot x}$ is a nilradical of a Borel subalgebra in $\fgl(n)$ that contains the standard Cartan subalgebra of diagonal matrices. More explicitly, if the $A$-orbit is given by $\Gamma_{n}^{a_{1},a_{2}, \cdots , a_{n-1}}$ where $a_{i}=U\, \text{ or } L$ for $1\leq i\leq n-1$, then $\overline{A\cdot x} $ is the set of matrices of the following form
$$
\nilrad:=\left\{x: x_{i+1}=\left [\begin{array}{cc} x_{i} & \begin{array}{c}  b_{1} \\
\vdots\\
b_{i}\end{array}\\
0 & 0 \end{array}\right  ]\right\}_{\mbox{\large ,}}
$$ 
with $b_{j}\in\mathbb{C}$ if $a_{i}=U$, or if $a_{i}=L$ 
$$
\nilrad:=\left\{x: x_{i+1}=\left [\begin{array}{cc} x_{i} & 0\\
\begin{array}{ccc} b_{1}&\cdots & b_{i}\end{array}& 0 \end{array}\right  ]\right\}_{\mbox{\large ,}}
$$
with $b_{j}\in\mathbb{C}$.  
\end{thm}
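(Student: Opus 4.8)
The plan is to compute the Zariski closure of each individual orbit $Im\Gamman$ and then recognize the answer as the nilradical of a Borel. By Theorem~\ref{thm:Aorb} together with the description of $\nsol$ preceding Theorem~\ref{thm:nil}, every $A$-orbit in $\glsnil$ equals $Im\Gamma_{n}^{a_{1},\dots,a_{n-1}}$ for some string $(a_{1},\dots,a_{n-1})$ with each $a_{i}\in\{U,L\}$, where $\orbu$ and $\orbl$ are the two free $Z_{i}$-orbits in $\nsol$ displayed in (\ref{eq:upper}) and (\ref{eq:lower}). Fixing such a string, it suffices to show that $\overline{Im\Gamman}$ equals the coordinate subspace $\nilrad$ written out in the statement, and that $\nilrad$ is the nilradical of a Borel subalgebra containing the diagonal Cartan $\fh$.

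First I would prove the inclusion $Im\Gamman\subset\nilrad$ using Proposition~\ref{r:sense}, which says that $x\in Im\Gamman$ exactly when $x_{i+1}\in\Ad(GL(i))\cdot x_{a_{i}}$ for every $i$, with $x_{a_{i}}$ a base point of $\orbi$. Writing an element of $\fgl(i+1)$ in block form $\left(\begin{smallmatrix}A & b\\ c & d\end{smallmatrix}\right)$ with $A\in\fgl(i)$, conjugation by $g=\mathrm{diag}(g',1)\in GL(i)$ produces $\left(\begin{smallmatrix}g'Ag'^{-1} & g'b\\ cg'^{-1} & d\end{smallmatrix}\right)$; since the base point of $\orbl$ in (\ref{eq:lower}) has $b=0$ and $d=0$, and that of $\orbu$ in (\ref{eq:upper}) has $c=0$ and $d=0$, it follows that if $a_{i}=L$ then the part of column $i+1$ above the diagonal of $x_{i+1}$ vanishes and $x_{i+1,i+1}=0$, while if $a_{i}=U$ then the part of row $i+1$ left of the diagonal of $x_{i+1}$ vanishes and $x_{i+1,i+1}=0$. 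These are exactly the level-$i$ relations defining $\nilrad$, so $Im\Gamman\subset\nilrad$. As $\nilrad$ is a coordinate subspace it is Zariski closed and irreducible, whence $\overline{Im\Gamman}\subset\nilrad$; counting surviving entries gives $\dim\nilrad=\sum_{i=1}^{n-1}i={n\choose 2}$ (all diagonal entries vanish), and by Theorem~3.12 of \cite{KW1}, as used in Example~\ref{ex:nil4}, $\overline{Im\Gamman}$ is irreducible of dimension ${n\choose 2}$; so the two coincide and $\overline{Im\Gamman}=\nilrad$.

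It then remains to identify $\nilrad$ as a nilradical. Reading off its defining relations, the diagonal is zero, for $k<l$ the entry $x_{k,l}$ survives iff $a_{l-1}=U$, and for $k>l$ it survives iff $a_{k-1}=L$. I would build a linear order $\prec$ on $\{1,\dots,n\}$ by starting from $\{1\}$ and, for $i=1,\dots,n-1$, inserting $i+1$ as the new greatest element when $a_{i}=U$ and as the new least element when $a_{i}=L$; let $\tau\in S_{n}$ list $\{1,\dots,n\}$ in $\prec$-increasing order. A direct check then gives $E_{k,l}\in\nilrad$ iff $k\prec l$, so $\nilrad=\bigoplus_{k\prec l}\C E_{k,l}$ is the nilradical of the Borel $\Ad(\tau)\fb$, with $\fb$ the upper triangular matrices; since the permutation matrix $\tau$ normalizes $\fh$, this Borel contains $\fh$. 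The same bookkeeping produces the explicit permutations recorded in Theorem~\ref{thm:perms}. The main obstacle is purely organizational: carefully tracking which border entries of $x_{i+1}$ are forced to vanish by the shapes of $\orbu$ and $\orbl$ and matching this level by level with the two cases in the definition of $\nilrad$; once that is done, the Borel identification is routine, since inserting an element at an extreme of a linear order keeps it linear, so $\{(k,l):k\prec l\}$ is a transitive tournament and $\nilrad$ is the nilradical of a Borel through $\fh$.
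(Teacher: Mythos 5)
Your proof is correct, and the inclusion $Im\Gamman\subset\nilrad$ together with the dimension/irreducibility closure argument is essentially the same as the paper's. Where you diverge is in the identification of $\nilrad$ as a nilradical: the paper proves the second (explicit) statement first, then invokes Gerstenhaber's theorem --- a linear subspace of nilpotent matrices in $\fgl(n)$ of dimension ${n\choose 2}$ is conjugate to the strictly upper triangular matrices --- together with the observation that $\nilrad$ is normalized by the diagonal Cartan, to conclude it is the nilradical of a Borel through $\fh$. You instead build the linear order $\prec$ by inserting $i+1$ at the top or bottom according to $a_i$, check that $E_{k,l}\in\nilrad$ iff $k\prec l$, and conclude $\nilrad=\Ad(\tau)\fn^{+}$ for the permutation $\tau$ that sorts $\{1,\dots,n\}$ by $\prec$. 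Your verification of the survival condition (for $k<l$, $x_{k,l}$ survives iff $a_{l-1}=U$; for $k>l$, iff $a_{k-1}=L$) is correct, and it matches $k\prec l$ exactly since the later index of $\{k,l\}$ determines their $\prec$-relation. The trade-off: the paper's Gerstenhaber route is shorter and more conceptual but uses a nontrivial classical theorem as a black box; your route is more elementary and self-contained, and it delivers the explicit permutations of Theorem~\ref{thm:perms} in the same breath, which the paper has to prove separately afterward. Both are valid; yours is arguably the more illuminating proof since it makes the combinatorics of the $2^{n-1}$ Borels transparent.
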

\begin{proof}
Let $x\in\glsnil$.  By Gerstenhaber's Theorem \cite{Ger}, it suffices to show the second statement of the theorem.  Then $\overline{A\cdot x}$ is a linear space consisting of nilpotent matrices of dimension ${n\choose 2}$, which is clearly normalized by the diagonal matrices in $\fgl(n)$.  

Suppose that $A\cdot x=Im\Gamman$ with $a_{i}=U,\, L$.  Then it is easy to see $A\cdot x\subset \nilrad$ by the definition of the morphism $\Gamman$ in section \ref{s:gammamap}.  By Theorem 3.12 in \cite{KW1}, $A\cdot x$ is an irreducible variety of dimension ${n\choose 2}$.  Thus, $\overline{A\cdot x}\subset \nilrad$ is an irreducible, closed subvariety of dimension ${n\choose 2}=\dim\nilrad$, and therefore $\overline{A\cdot x}=\nilrad$.

%We show the second statement by induction on $n$.  The case $n=1$ is trivial.  Without loss of generality, suppose that $a_{n-1}=U$.  The case of $a_{n-1}=L$ is completely analogous.  The key observation is that 
%\begin{equation}\label{eq:gammacut}
%(Im\Gamman)_{n-1}=\Gammam
%\end{equation}
%This follows immediately from the definition of the mappings $\Gamman$ in section \ref{s:gammamap}.   Now, since $a_{n-1}=U$, we have
%\begin{equation}\label{eq:inc}
%Im\Gamman\subseteq  \left [\begin{array}{cc} Im\Gammam& \begin{array}{c}  b_{1} \\
%\vdots\\
%b_{n-1}\end{array}\\
%0 & 0 \end{array}\right  ]
%\end{equation}

%Taking closures on both sides of (\ref{eq:inc}), we get
%\begin{equation}\label{eq:inc2}
%\overline{Im\Gamman}\subseteq  \left [\begin{array}{cc} \overline{Im\Gammam}& \begin{array}{c}  b_{1} \\
%\vdots\\
%b_{n-1}\end{array}\\
%0 & 0 \end{array}\right  ]
%\end{equation}
%But by Theorem \ref{thm:Aorb} for $\fgl(n-1)$, $Im\Gammam$ is an $A^{n-1}$ orbit in $\fgl_{0}^{sreg}(n-1)$ where $A^{n-1}$ is the Gelfand-Zeitlin group for $\fgl(n-1)$.  But then by Theorem 3.12 in \cite{KW1}  the variety 
%$$
%\left [\begin{array}{cc} \overline{Im\Gammam}& \begin{array}{c}  b_{1} \\
%\vdots\\
%b_{n-1}\end{array}\\
%0 & 0 \end{array}\right  ]
%$$
%is an irreducible Zariski closed subvariety of $\fgl(n)$ of dimension ${n\choose 2}$.  Thus, we have equality in (\ref{eq:inc2}).  By the induction hypothesis $\overline{Im\Gammam}$ is a vector space of the form described in the theorem, and hence $\overline{Im\Gamman}$ must be as well.  This completes the induction and the proof of the theorem.
\end{proof}
\begin{rem}
 The strictly lower triangular matrices $\fn^{-}$ is the closure of the $A$-orbit $\Gamma_{n}^{L, \cdots, L}$, and the strictly upper triangular matrices $ \fn^{+}$ is the closure of the $A$-orbit $\Gamma_{n}^{U,\cdots, U}$.  

%We note that we get as the closure of an $A$-orbit in $\glsnil$ by taking $a_{i}=L$ for all $i$ and the strictly upper triangular matrices $ \fn^{+}$ by taking $a_{i}=U$ for all $i$.
\end{rem}  

%We denote the nilradical obtained in Theorem \ref{thm:nilradical} as $\overline{Im\Gamman}$ as $\nilrad$.  
By Theorem \ref{thm:nilradical}, the $A$-orbits in $\glsnil$ give rise to $2^{n-1}$ Borel subalgebras of $\fgl(n)$ that contain the diagonal matrices.  Moreover, each of the nilradicals $\nilrad$ is conjugate to the strictly lower triangular matrices by a unique permutation in $\mathcal{S}_{n}$, the symmetric group on $n$ letters.  The $A$-orbits in $\glsnil$ thus determine $2^{n-1}$ permutations.  We now describe these permutations.   
\begin{thm}\label{thm:perms}
Let $\fn^{-}$ denote the strictly lower triangular matrices in $\fgl(n)$ and let $\nilrad$ be as in Theorem \ref{thm:nilradical}.  Then $\nilrad$ is obtained from $\fn^{-}$ by conjugating by a permutation $\sigma=\tau_{1}\,\tau_{2}\cdots \tau_{n-1}$ where $\tau_{i}\in\mathcal{S}_{i+1}$ is either the long element $w_{i, 0}$ of $\mathcal{S}_{i+1}$ or the identity permutation, $id_{i}$.  The $\tau_{i}$ are determined by the values of $a_{i}$ as follows.  Let $a_{n}=L$.   Starting with $i=n-1$, we compare $a_{i}, \, a_{i+1}$.  If $a_{i}=a_{i+1}$, then $\tau_{i}=id_{i}$, but if $a_{i}\neq a_{i+1}$, then $\tau_{i}=w_{0,i}$. 

The same procedure beginning with $a_{n}=U$ produces a permutation that conjugates the strictly upper triangular matrices $\fn^{+}$ into $\nilrad$.  
\end{thm}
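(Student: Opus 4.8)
The plan is to read off from Theorem~\ref{thm:nilradical} a recursion for $\nilrad$ and then compute the conjugating permutation by an induction that carries along one extra bit of data. For a word $\mathbf a=(a_1,\dots,a_{k-1})$ with $a_i\in\{U,L\}$, write $\mathfrak m(\mathbf a)\subset\fgl(k)$ for the space defined in Theorem~\ref{thm:nilradical}, so that $\nilrad=\mathfrak m(a_1,\dots,a_{n-1})$. The block description there shows that the $(k-1)\times(k-1)$ cutoff of every element of $\mathfrak m(\mathbf a)$ lies in $\mathfrak m(a_1,\dots,a_{k-2})$, and that passing from level $k-1$ to $k$ adjoins $\langle E_{k,1},\dots,E_{k,k-1}\rangle$ (the strictly lower part of row $k$) when $a_{k-1}=L$, or $\langle E_{1,k},\dots,E_{k-1,k}\rangle$ (the strictly upper part of column $k$) when $a_{k-1}=U$. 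Since $\mathfrak m(\mathbf a)$ is the nilradical of a Borel containing the diagonal torus, there is a unique $\sigma\in\mathcal S_k$ with $\mathfrak m(\mathbf a)=\sigma\,\fn^{-}_k\,\sigma^{-1}$ (conjugation by the permutation matrix, $E_{p,q}\mapsto E_{\sigma(p),\sigma(q)}$), where $\fn^{\pm}_k$ denotes the strictly upper/lower triangular matrices in $\fgl(k)$; the theorem asks us to identify this $\sigma$.

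The extra data is the ``boundary letter.'' For $\mathbf a=(a_1,\dots,a_{k-1})$ and $\epsilon\in\{U,L\}$, let $\sigma^{\epsilon}(\mathbf a)=\tau_1\cdots\tau_{k-1}\in\mathcal S_k$ be the element built by the recipe of the theorem run with $a_k:=\epsilon$. I would prove by induction on $k$ the two statements
$$
\mathfrak m(\mathbf a)=\sigma^{L}(\mathbf a)\,\fn^{-}_k\,\sigma^{L}(\mathbf a)^{-1}
\qquad\text{and}\qquad
\mathfrak m(\mathbf a)=\sigma^{U}(\mathbf a)\,\fn^{+}_k\,\sigma^{U}(\mathbf a)^{-1}.
$$
The first with $\epsilon=L$ is the theorem; the second with $\epsilon=U$ is the last sentence of the theorem (noting $\fn^{+}_n=w_{0,n-1}\fn^{-}_n w_{0,n-1}$, since $w_{0,n-1}^2=e$). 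The base case $k=1$ is vacuous, both $\sigma^{\epsilon}$ being empty products.

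For the inductive step I would peel off the last factor $\tau_{k-1}$, which compares $a_{k-1}$ with the phantom letter. In the $L$-statement: if $a_{k-1}=L$ then $\tau_{k-1}=\mathrm{id}$ and $\tau_1\cdots\tau_{k-2}$ is precisely the recipe for $(a_1,\dots,a_{k-2})$ with phantom letter $L$, i.e.\ $\sigma^{L}(a_1,\dots,a_{k-2})\in\mathcal S_{k-1}$; this permutation fixes $k$ and permutes $\{1,\dots,k-1\}$, so conjugating $\fn^{-}_k=\fn^{-}_{k-1}\oplus\langle E_{k,1},\dots,E_{k,k-1}\rangle$ by it yields $\mathfrak m(a_1,\dots,a_{k-2})$ (induction hypothesis) $\oplus$ the unchanged span $\langle E_{k,1},\dots,E_{k,k-1}\rangle$, which is $\mathfrak m(\mathbf a)$. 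If instead $a_{k-1}=U$, then $\tau_{k-1}=w_{0,k-1}$ (the long element of $\mathcal S_k$) and $\tau_1\cdots\tau_{k-2}$ is the recipe for $(a_1,\dots,a_{k-2})$ with phantom letter $U$, i.e.\ $\sigma^{U}(a_1,\dots,a_{k-2})$; using $w_{0,k-1}\fn^{-}_k w_{0,k-1}=\fn^{+}_k$ we get $\sigma^{L}(\mathbf a)\fn^{-}_k\sigma^{L}(\mathbf a)^{-1}=\sigma^{U}(a_1,\dots,a_{k-2})\fn^{+}_k\sigma^{U}(a_1,\dots,a_{k-2})^{-1}$, and then the same cutoff argument, now with $\fn^{+}_k=\fn^{+}_{k-1}\oplus\langle E_{1,k},\dots,E_{k-1,k}\rangle$ and the $\fn^{+}$-half of the induction, produces $\mathfrak m(\mathbf a)$. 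The $\fn^{+}$-statement is proved symmetrically, interchanging $U\leftrightarrow L$ throughout.

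I expect the only real obstacle to be organizational rather than computational: one must notice that the $L$- and $U$-versions have to be proved simultaneously, because stripping the last factor of $\sigma^{L}(\mathbf a)$ when $a_{k-1}=U$ produces $\sigma^{U}$, not $\sigma^{L}$, of the truncated word — the phantom boundary letter flips. Once this is set up, everything else reduces to one-line manipulations of permutation matrices and block-triangular subspaces. As a sanity check I would also record the equivalent combinatorial description: $\pi:=\sigma^{-1}$ is the unique permutation with $\pi(m)>\pi(s)\iff a_{m-1}=L$ for all $s<m$; this makes the extreme cases ($\sigma=\mathrm{id}$ when all $a_i=L$, $\sigma=w_{0,n-1}$ when all $a_i=U$) immediate and is consistent with the Remark following Theorem~\ref{thm:nilradical}.
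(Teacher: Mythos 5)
Your argument is correct and in essence the same peeling induction as the paper's: conjugate by the outermost $\tau$, pass to the cutoff one level down, and recurse, with the $\fn^{-}$ and $\fn^{+}$ statements carried along simultaneously. The paper phrases it more tersely — jumping straight to the largest index $i$ with $a_{i}\neq a_{i+1}$ and leaving the dual $\fn^{+}$ hypothesis implicit in the words ``use induction'' — whereas your phantom boundary letter $\epsilon$ makes the required double induction explicit; the mathematical content is the same.
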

Before proving Theorem \ref{thm:perms}, let us see it in action in Example \ref{ex:nil4}.  In that case the nilradical in equation (\ref{eq:nilexample}) is $\fn_{L, \, L,\, U}$.  Thus, according to Theorem \ref{thm:perms}, $\sigma=(13)(14)(23)$, the product of the long elements for $\mathcal{S}_{3}$ and $\mathcal{S}_{4}$.  Notice that $\sigma=(1432)$, which is precisely the permutation that we observed conjugates the strictly lower triangular matrices in $\fgl(4)$ into $\fn_{L,\, L,\, U}$ in Example \ref{ex:nil4}.  

We now prove Theorem \ref{thm:perms}.  In the proof, we will make use of the following notation.  Let $\pi_{i}: \fgl(n)\to \fgl(i)$ be the projection $\pi_{i}(x)=x_{i}$.  For any subset $S\subset \fgl(n)$ we will denote by $S_{i}$ the image $\pi_{i}(S)$.  
\begin{proof}
Suppose that $L=a_{n}=a_{n-1}=\cdots= a_{i+1}$, but $a_{i}=U$.  Conjugating $\fn^{-}$ by $\tau_{i}=w_{0,i}$ produces the nilradical $\Ad(\tau_{i})\cdot\fn^{-}$ with $(\Ad(\tau_{i})\cdot\fn^{-})_{i+1}=\fn^{+}_{i+1}$.  Thus, $(\nilrad)_{i+1}$ and $(\Ad(\tau_{i})\cdot \fn^{-})_{i+1}$ now have the same $i+1$ columns.  We also note that the components of $\Ad(\tau_{i})\cdot\fn^{-}$ and $\nilrad$ in $\fgl(i+1)^{\perp}$ also agree, as $\tau_{i}$ permutes the strictly lower triangular entries of the rows below the $i+1$th row of $\fn^{-}$ amongst themselves.  Now, we start the procedure again with $(\Ad(\tau_{i})\cdot \fn^{-})_{i+1}$ and $a_{i}=U$ use induction. We note that conjugating $\Ad(\tau_{i})\cdot \fn^{-}$ by a permutation in $\mathcal{S}_{k}$ with $k\leq i+1$ leaves the component of $\Ad(\tau_{i})\cdot \fn^{-}$ in $\fgl(i+1)^{\perp}$ unchanged.  This proves the theorem.  
\end{proof}
\begin{rem}\label{r:ps}
There is a related result in recent work of Parlett and Strang.  See Lemma 1 in \cite{PS}, pg 1736.  
%Parlett and Strang in \cite{PS} show that every matrix in $\glnil$ with the property that each cutoff is regular is a permutation of a strictly lower triangular matrix.  %but they do not explicitly construct the permutation.  
\end{rem}
\subsection{General solution varieties $\sol$ and counting $A$-orbits in $\glsfibre$}
  Now, we use our understanding of the nilpotent case to count $A$-orbits in the general case.  Recall the definition of the solution variety $\sol$ in section \ref{s:gammasum}.   %The general extension problem was outlined in section \ref{s:gammasum}.  %
We also recall some notation.  Given $c\in\Co^{\frac{n(n+1)}{2}}$, we write $c=(c_{1},\cdots, c_{i},\cdots, c_{n})$ with $c_{i}=(z_{1},\cdots, z_{i})\in\Co^{i}$ and define a corresponding monic polynomial $\pci$ with coefficients given by $c_{i}$ (see (\ref{eq:polyci})).  Recall also that $J=J_{\lambda_{1}}\oplus\cdots\oplus J_{\lambda_{r}}$, $J_{\lambda_{k}}\in\fgl(n_{k})$, denotes the regular Jordan form that is the $i\times i$ cutoff of the matrix in (\ref{eq:bigmatrix}).  We now describe the $Z_{i}$-orbit structure of the variety $\sol$ for any $c_{i}\in\C^{i}$ and $c_{i+1}\in\C^{i+1}$.   

 % Since the eigenvalues $\lambda_{s}$ differ from one another, the Jordan blocks $J_{\lambda_{s}}$ behave independently.  
  As in the nilpotent case, to understand $\sol$ we must compute the characteristic polynomial of the matrix in (\ref{eq:bigmatrix}).
\begin{prop}\label{prop:charpoly}
The characteristic polynomial of the matrix in (\ref{eq:bigmatrix}) is
\begin{equation}\label{eq:solve}
(w-t)\prod_{k=1}^{r}(\lambda_{k}-t)^{n_{k}}
+\sum_{j=1}^{r}\left [(-1)^{n_{j}}\prod_{k=1,k\neq j}^{r}(\lambda_{k}-t)^{n_{k}}\sum_{l=0}^{n_{j}-1}\sum_{j'=1}^{n_{j}-l}z_{j,j'}y_{j,j'+l}(t-\lambda_{j})^{n_{j}-1-l}\right ]_{\mbox{\large .}}
\end{equation}
\end{prop}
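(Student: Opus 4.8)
The plan is to follow the same strategy as in the proof of Proposition~\ref{prop:det2}: expand $\det(X-t)$ via the Schur complement formula, applied to the splitting of $X$ into its $i\times i$ cutoff $J=J_{\lambda_1}\oplus\cdots\oplus J_{\lambda_r}$, the extra column $\underline{y}$, the extra row $\underline{z}$, and the corner entry $w$. As in \cite{HJ} this gives
\[
\det(X-t)=\det(J-t)\,(w-t)-\underline{z}\;\mathrm{adj}(J-t)\;\underline{y},
\]
where $\underline{z}=[z_{1,1},\dots,z_{1,n_1},\dots,z_{r,1},\dots,z_{r,n_r}]$ and $\underline{y}$ is the corresponding column vector. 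Since $J$ is block diagonal, $\det(J-t)=\prod_{k=1}^{r}(\lambda_k-t)^{n_k}$, which accounts for the first term of (\ref{eq:solve}) after multiplication by $(w-t)$.

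The substantive step is to compute $\mathrm{adj}(J-t)$ and pair it against $\underline{z}$ and $\underline{y}$. First I would record that for an invertible block diagonal matrix $M=M_1\oplus\cdots\oplus M_r$ one has $\mathrm{adj}(M)=\bigoplus_{j=1}^{r}\bigl(\prod_{k\neq j}\det M_k\bigr)\mathrm{adj}(M_j)$, directly from $\mathrm{adj}(M)=\det(M)M^{-1}$; since both sides are polynomial in the entries, the identity persists at $M=J-t$ for all $t$ (or one simply works on the Zariski-dense locus where $J-t$ is invertible). Hence $\mathrm{adj}(J-t)$ is block diagonal with $j$-th block $\bigl(\prod_{k\neq j}(\lambda_k-t)^{n_k}\bigr)\,\mathrm{adj}(J_{\lambda_j}-t)$, and therefore
\[
\underline{z}\;\mathrm{adj}(J-t)\;\underline{y}=\sum_{j=1}^{r}\Bigl(\prod_{k\neq j}(\lambda_k-t)^{n_k}\Bigr)\,\underline{z_j}\;\mathrm{adj}(J_{\lambda_j}-t)\;\underline{y_j},
\]
where $\underline{z_j}=[z_{j,1},\dots,z_{j,n_j}]$ and $\underline{y_j}=[y_{j,1},\dots,y_{j,n_j}]^{T}$ as in (\ref{eq:bigmatrix}).

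It then remains to compute $\mathrm{adj}(J_{\lambda_j}-t)$ for a single Jordan block. Writing $J_{\lambda_j}=\lambda_j\,\mathrm{Id}+N$ with $N$ the nilpotent shift on $\fgl(n_j)$, the identities $\bigl((\lambda_j-t)\mathrm{Id}+N\bigr)^{-1}=\sum_{k=0}^{n_j-1}(-1)^{k}(\lambda_j-t)^{-k-1}N^{k}$ and $\det(J_{\lambda_j}-t)=(\lambda_j-t)^{n_j}$ yield $\mathrm{adj}(J_{\lambda_j}-t)=(-1)^{n_j-1}\sum_{k=0}^{n_j-1}(t-\lambda_j)^{n_j-1-k}N^{k}$ --- which is exactly the Toeplitz matrix displayed in the proof of Proposition~\ref{prop:det2} translated by $\lambda_j$ (the case $r=1$, $\lambda_1=0$ recovers that proposition verbatim). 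Since $\underline{z_j}N^{k}\underline{y_j}=\sum_{j'=1}^{n_j-k}z_{j,j'}y_{j,j'+k}$, substituting $l=k$, collecting the overall sign $-(-1)^{n_j-1}=(-1)^{n_j}$, and combining with the first paragraph turns the displayed formulas into (\ref{eq:solve}). The only real bookkeeping hazard is keeping the two sign contributions straight --- the $(-1)^{n_j-1}$ from the single-block adjugate and the minus sign in the Schur formula --- together with the point that the adjugate of a block diagonal matrix is not the block diagonal of the adjugates but carries the extra determinant factors $\prod_{k\neq j}(\lambda_k-t)^{n_k}$; neither amounts to a genuine difficulty.
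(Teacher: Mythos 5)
Your proof is correct and follows the same route the paper sketches: Schur complement formula, reduction to a single Jordan block (which you make explicit via the block-diagonal adjugate identity $\mathrm{adj}(M_1\oplus\cdots\oplus M_r)=\bigoplus_j(\prod_{k\neq j}\det M_k)\,\mathrm{adj}(M_j)$), and handling one Jordan block by the translation $J_{\lambda_j}=\lambda_j\,\mathrm{Id}+N$ of the nilpotent computation in Proposition~\ref{prop:det2}. The paper's own proof is a two-sentence sketch ("reduces to the case of a single Jordan block \dots\ follows by a simple change of variables"); you have supplied precisely the bookkeeping it leaves out, and the signs $(-1)^{n_j-1}$ from the single-block adjugate and $-1$ from the Schur formula combine to the $(-1)^{n_j}$ in (\ref{eq:solve}) as required.
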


 The proof of this proposition reduces to the case where $J$ is a single Jordan block of eigenvalue $\lambda$.   The case of a single Jordan block follows easily from the nilpotent case in Proposition \ref{prop:det2} by a simple change of variables.  

We need to understand the conditions that $z_{i,j}$, $y_{i,j}$, and $w$ must satisfy so that polynomial in (\ref{eq:solve}) is equal to the monic polynomial $\pcp$.  $w$ is easily determined by considering the trace of the matrix in (\ref{eq:bigmatrix}).  The values of the $z_{i,j}$ and the $y_{i,j}$ are directly related to the number of roots in common between the polynomials $\pci$ and $\pcp$.   Suppose that the polynomials $\pci$ and $\pcp$ have $j$ roots in common, where $1\leq j\leq r$.  Then we claim that $\sol$ has precisely $2^{j}$ free $Z_{i}$-orbits.  Consider the Jordan block corresponding to the eigenvalue $\lambda_{k}$.  First, suppose that $\lambda_{k}$ is a root of $\pcp$.  Then Proposition \ref{prop:charpoly} implies 
\begin{equation}\label{eq:overlap}
z_{k,1} y_{k, n_{k}}=0.
\end{equation}
However, if $\lambda_{k}$ is not a root of $\pcp$, then Proposition \ref{prop:charpoly} gives 
\begin{equation}\label{eq:disjoint}
z_{k,1} y_{k, n_{k}}\in\Ct.
\end{equation}

As in the nilpotent case, (\ref{eq:overlap}) gives rise to two separate cases.  
\begin{equation}\label{eq:case1}
z_{k,1}\in\Ct,\; y_{k, n_{k}}=0
\end{equation}
and 
\begin{equation}\label{eq:case2}
y_{k,n_{k}}\in\Ct, \; z_{k,1}=0.
\end{equation}

In the case of (\ref{eq:case1}), we can argue using (\ref{eq:solve}) that the coordinates $y_{k,i}$ for $1\leq i\leq n_{k}$ can be solved uniquely as regular functions of $z_{k,1}\in\Ct, \, z_{k,2},\cdots, z_{k, n_{k}}\in\Co$.  And in the case of (\ref{eq:case2}), we can solve for $z_{k,i}$ as regular functions of $y_{k, n_{k}}\in\Ct$ and $y_{k,i}\in\Co$, $1\leq i\leq n_{k}-1$.  In the case of (\ref{eq:disjoint}), we can take either the $z_{k,i}$ as coordinates that determine the $y_{k,i}$ or vice versa.  For concreteness, we take $y_{k,i}=p_{i}(z_{k,1},\cdots, z_{k, n_{k}})$ to be regular functions of $z_{k,1}\in\Ct, \, z_{k,2},\cdots, z_{k, n_{k}}\in \Co$.  

\begin{rem}\label{r:sols}
The solutions in the cases of (\ref{eq:overlap}) and (\ref{eq:disjoint}) are obtained by setting the derivatives of the polynomial in (\ref{eq:solve}) up to order $n_{p}-1$ evaluated at $\lambda_{p}$ equal to the corresponding derivatives of the polynomial $\pcp$ evaluated at $\lambda_{p}$ for $1\leq p\leq r$.  This produces $r$ systems of linear equations.  Each system involves only the coordinates $z_{p, k}$ and $y_{p, k}$ from the $p$th Jordan block.  This follows directly from the fact that the eigenvalues $\lambda_{s}$ are all distinct.    Each system can then be solved inductively using the fact that the coefficient of $(-1)^{n_{p}} (t-\lambda_{p})^{q}\prod_{k=1,\,k\neq p}^{r}(\lambda_{k}-t)^{n_{r}}$ is given by the $n-q\,th$ row of the matrix product
\begin{equation}\label{eq:matrixproduct}
\left[\begin{array}{cccc}
z_{p,1}& z_{p,2} & \cdots & z_{p,n_{p}}\\
0&z_{p,1}&\ddots &\vdots\\
\vdots&\text{ }& \ddots & z_{p,2}\\
0& \cdots & 0 &z_{p,1}\\
\end{array}\right ]\cdot\left [\begin{array}{c} y_{p,1}\\
\vdots\\ 
\vdots\\
y_{p,n_{p}}\end{array}\right ]_{\mbox{\large .}}
\end{equation}
\end{rem}

Recall that $Z_{i}$ is the direct product $Z_{i}=Z_{J_{\lambda_{1}}}\times\cdots\times Z_{J_{\lambda_{r}}}$, with $Z_{J_{\lambda_{s}}}$ the centralizer of $J_{\lambda_{s}}$.  The adjoint action of $Z_{i}$ on $\sol$ is a diagonal action where $Z_{J_{\lambda_{s}}}$ acts only on the columns and rows of an $x\in\sol$ containing $J_{\lambda_{s}}$.  This observation allowed us to decompose a $Z_{i}$-orbit $\mathcal{O}$ into the product of $ Z_{J_{\lambda_{k}}}$-orbits, $\mathcal{O}_{k}\subset\Co^{2 n_{k}}$ as in equation (\ref{eq:prodorb}), which we restate here for the convenience of the reader.
$$
\mathcal{O}\simeq\mathcal{O}_{1}\times \cdots\times\mathcal{O}_{k},
$$
where the isomorphism is $Z_{i}$-equivariant and $Z_{J_{\lambda_{k}}}$ acts on $\mathcal{O}_{k}$ as in equation (\ref{eq:littleact}).  If $\lambda_{k}$ is a root of $p_{c_{i+1}}(t)$, then (\ref{eq:overlap}) gives rise to two free $Z_{J_{\lambda_{k}}}$-orbits, an ``upper" orbit $\mathcal{O}_{U}^{k}$ in the case of (\ref{eq:case2}) and a ``lower" orbit $\mathcal{O}_{L}^{k}$ in the case of (\ref{eq:case1}).  This is proved similarly to the nilpotent case.  If on the other hand, $\lambda_{k}$ is not a root of $p_{c_{i+1}}(t)$, and we have (\ref{eq:disjoint}), then the vector 
\begin{equation}\label{eq:ugvector}
([z_{k,1},\cdots, z_{k, n_{k}}], [p_{1}(z_{k,1},\cdots, z_{k, n_{k}}),\cdots, p_{k}(z_{k,1},\cdots, z_{k, n_{k}})]^{T})\in\Co^{2 n_{k}}
\end{equation}
is a free $Z_{J_{k}}$-orbit under the action of $Z_{J_{k}}$ defined in (\ref{eq:littleact}).  Thus, using the orbits $\mathcal{O}_{U}^{k}$ and $\mathcal{O}_{L}^{k}$ for $1\leq k\leq j$, we can construct $2^{j}$ free $Z_{i}$-orbits in $\sol$ by (\ref{eq:prodorb}).  

  %%ach such orbit can be used to construct a free $Z_{i}$-orbit $\mathcal{O}$ using (\ref{eq:prodorb}).  Thus, we have a total of $2^{j}$ free $Z_{i}$-orbits in $\sol$.  

 %Thus, the study of the action of $Z_{i}$ on $\sol$ reduces to the nilpotent case.  Using equations (\ref{eq:disjoint}), (\ref{eq:case1}), and (\ref{eq:case2}), we see that we obtain precisely $2^{j}$ orbits of $Z_{i}$ on $\sol$ on which $Z_{i}$ acts freely. (As in the nilpotent case, we get two orbits for each instance of (\ref{eq:overlap}).)  These orbits necessarily consist of regular elements of $\fgl(i+1)$ by Theorem \ref{thm:regorbits}.  (If $z_{k,1}=y_{k, n_{k}}=0$ in (\ref{eq:overlap}), one can show that even if solutions were to exists in this case, they could not belong to $Z_{i}$-orbits of dimension $i$.  The argument is exactly the same as in the nilpotent case.)  

Now, using Theorem \ref{thm:regorbits}, we can construct $2^{\sum_{i=1}^{n-1} j_{i}}$ $\Gamman$ morphisms into $\glsfibre$ where $j_{i}$ is the number of roots in common to the monic polynomials $\pci$ and $\pcp$.  The following result follows immediately from Theorem \ref{thm:Aorb} and Theorem \ref{thm:sregact} and Remark \ref{r:different}.  

%As in the nilpotent case the images of the different $\Gamman$ mappings are disjoint by Proposition \ref{r:sense}.  Thus, we have described the $A$-orbit structure of a general fibre $\glsfibre$.
\begin{thm}\label{thm:general}
Let $c=(c_{1}, c_{2},\cdots ,c_{i}, c_{i+1},\cdots , c_{n})\in\mathbb{C}^{\frac{n(n+1)}{2}}$.  Suppose there are $0\leq j_{i}\leq i$ roots in common between the monic polynomials $\pci$ and $\pcp$.  Then the number of $A$-orbits in $\glsfibre$ is exactly $2^{\sum_{i=1}^{n-1} j_{i}}.$  Further, on $\glsfibre$ the orbits of $A$ are the orbits of a free algebraic action of the commutative, connected algebraic group $Z=\Gprod$ on $\glsfibre$. 
\end{thm}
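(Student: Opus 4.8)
The plan is to deduce the theorem by combining the structural results of Section~\ref{s:gamma} with the block-by-block analysis of the solution varieties carried out above. First I would record that, by Theorem~\ref{thm:Aorb} together with Remark~\ref{r:different}, the assignment $(\mathcal{O}^{1}_{a_{1}},\dots,\mathcal{O}^{n-1}_{a_{n-1}})\mapsto Im\,\Gamman$ is a bijection from the set of $(n-1)$-tuples of free $Z_{i}$-orbits in $\sol$ onto the set of $A$-orbits in $\glsfibre$; here one also uses Theorem~\ref{thm:regorbits}, which guarantees that every free $Z_{i}$-orbit automatically lies in $\fgl(i+1)^{reg}$, so that the regularity hypothesis appearing in Theorem~\ref{thm:Aorb} imposes no further restriction. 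Since $Z=\Gprod$ acts diagonally on $\Xi^{1}_{c_{1},c_{2}}\times\cdots\times\Xi^{n-1}_{c_{n-1},c_{n}}$, a $Z$-orbit is free exactly when each of its $Z_{i}$-components is free, and hence the number of $A$-orbits in $\glsfibre$ equals the product $\prod_{i=1}^{n-1}N_{i}$, where $N_{i}$ is the number of free $Z_{i}$-orbits in $\sol$.

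The main step is the computation $N_{i}=2^{j_{i}}$. Here I would use the $Z_{i}$-equivariant factorization $\mathcal{O}\simeq\mathcal{O}_{1}\times\cdots\times\mathcal{O}_{r}$ of (\ref{eq:prodorb}), which reduces freeness of a $Z_{i}$-orbit to freeness of each factor $\mathcal{O}_{k}$ under $Z_{J_{\lambda_{k}}}$. Proposition~\ref{prop:charpoly} then produces, for the block of eigenvalue $\lambda_{k}$, the constraint $z_{k,1}y_{k,n_{k}}=0$ of (\ref{eq:overlap}) when $\lambda_{k}$ is a common root of $p_{c_{i}}(t)$ and $p_{c_{i+1}}(t)$, and the constraint $z_{k,1}y_{k,n_{k}}\in\Ct$ of (\ref{eq:disjoint}) otherwise. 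As in the nilpotent analysis, in the first situation the two cases (\ref{eq:case1}) and (\ref{eq:case2}) determine, via the Toeplitz system (\ref{eq:matrixproduct}) and Remark~\ref{r:sols}, exactly two free $Z_{J_{\lambda_{k}}}$-orbits $\mathcal{O}_{U}^{k}$ and $\mathcal{O}_{L}^{k}$, while in the second situation there is a single free $Z_{J_{\lambda_{k}}}$-orbit, namely the one through the vector (\ref{eq:ugvector}). Hence each of the $j_{i}$ common roots contributes a factor $2$ and every remaining block contributes a factor $1$, giving $N_{i}=2^{j_{i}}$ and therefore $\prod_{i=1}^{n-1}N_{i}=2^{\sum_{i=1}^{n-1}j_{i}}$.

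For the last assertion, that the $A$-orbits in $\glsfibre$ are precisely the orbits of a free algebraic action of $Z=\Gprod$ on $\glsfibre$, nothing more is required: this is exactly Theorem~\ref{thm:sregact}. The one point that I expect to demand genuine work, rather than bookkeeping, is verifying that each of the cases (\ref{eq:case1}) and (\ref{eq:case2}) gives a \emph{single} free $Z_{J_{\lambda_{k}}}$-orbit and not a positive-dimensional family; this is handled by the solvability statement in Remark~\ref{r:sols}, since once the pivot coordinate ($z_{k,1}$ or $y_{k,n_{k}}$) is forced into $\Ct$ the remaining coordinates of that block are determined up to the $Z_{J_{\lambda_{k}}}$-action, exactly as in the linear-algebra computation behind the proof of Lemma~\ref{l:stab} in the nilpotent case.
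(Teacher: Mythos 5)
Your proposal is correct and follows essentially the same route as the paper: it assembles Theorem~\ref{thm:Aorb}, Remark~\ref{r:different}, and Theorem~\ref{thm:regorbits} into the bijection with tuples of free $Z_{i}$-orbits, computes $N_{i}=2^{j_{i}}$ via the factorization (\ref{eq:prodorb}) and the block-by-block dichotomy (\ref{eq:overlap})/(\ref{eq:disjoint}) coming from Proposition~\ref{prop:charpoly} and Remark~\ref{r:sols}, and invokes Theorem~\ref{thm:sregact} for the free algebraic $Z$-action. The paper states Theorem~\ref{thm:general} as following ``immediately'' from the same three results after the identical solution-variety analysis, so you have simply written out the bookkeeping the paper leaves implicit.
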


\begin{rem}
A similar result is obtained in recent work of Bielawski and Pidstrygach \cite{BP}.  See Remark \ref{r:bp} in the introduction.
\end{rem}

Theorem \ref{thm:general} lets us identify exactly where the action of the group $A$ is transitive on $\glsfibre$.  Let $\Theta_{n}$ be the set of $c\in\mathbb{C}^{\frac{n(n+1)}{2}}$ such that the monic polynomials $\pci$ and $\pcp$ have no roots in common.   From Remark 2.16 in \cite{KW1}, it follows that $\Theta_{n}\subset\Co^{\frac{n(n+1)}{2}}$ is Zariski principal open. 

\begin{cor}\label{c:generic}
The action of $A$ is transitive on $\glsfibre$ if and only if $c\in\Theta_{n}$. 
\end{cor}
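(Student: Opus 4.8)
The plan is to deduce the corollary directly from Theorem \ref{thm:general}. That theorem computes the number of $A$-orbits in $\glsfibre$ to be exactly $2^{\sum_{i=1}^{n-1} j_i}$, where $j_i$ denotes the number of roots common to the monic polynomials $\pci$ and $\pcp$. Since $\glsfibre$ is non-empty for every $c$ (by Theorem \ref{thm:Hess} it contains the unique Hessenberg matrix of the fibre $\glfibre$), the action of $A$ on $\glsfibre$ is transitive precisely when $\glsfibre$ is a single $A$-orbit, i.e. when $2^{\sum_{i=1}^{n-1} j_i} = 1$.

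First I would observe that each $j_i$ is a non-negative integer, so $\sum_{i=1}^{n-1} j_i = 0$ if and only if $j_i = 0$ for all $i$ with $1 \le i \le n-1$; equivalently, $\pci$ and $\pcp$ have no root in common for each such $i$. By the definition of $\Theta_n$ recalled just above the statement, this last condition is exactly the assertion $c \in \Theta_n$. Chaining the two equivalences yields the corollary.

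There is no genuine obstacle at this stage: the entire content is carried by Theorem \ref{thm:general}, whose proof in turn rests on the bijection between $A$-orbits in $\glsfibre$ and free $\Gprod$-orbits on the product of solution varieties $\Xi^{1}_{c_{1},c_{2}}\times\cdots\times\Xi^{n-1}_{c_{n-1},c_{n}}$ (Theorem \ref{thm:Aorb}, Remark \ref{r:different}), together with the count of free $Z_i$-orbits in each $\sol$ in terms of the overlap of the consecutive characteristic polynomials. The only point worth making explicit in the write-up is that the orbit count is an exact (finite) power of $2$, so there is no degenerate case to rule out; the remaining step is the elementary remark that a power of two equals one exactly when its exponent vanishes.
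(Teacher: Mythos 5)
Your proposal is correct and takes exactly the route the paper intends: Corollary \ref{c:generic} is stated immediately after Theorem \ref{thm:general} as a direct consequence, and the implicit argument is precisely yours — transitivity is equivalent to $2^{\sum_{i=1}^{n-1} j_i}=1$, which holds iff every $j_i=0$, i.e. $c\in\Theta_n$. The only small point worth keeping in the write-up is the non-emptiness of $\glsfibre$ (via the Hessenberg matrix), which you already noted.
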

\begin{rem}\label{r:willsee}
We will see in the next section that for $c\in\Theta_{n}$, $\glsfibre=\glfibre$.  Thus, for $c\in \Theta_{n}$ the fibre $\glfibre$ consists entirely of strongly regular elements.  
\end{rem}
%Theorem \ref{thm:general} allows to identify precisely which fibres over $\fgl(n)^{sreg}$ the action of $A$ is transitive.  
%\begin{cor}\label{c:generic}
%The action of A is transitive on $\glsfibre$ if and only if $c=(c_{1},\cdots ,c_{n})$ with $c_{i}$ and $c_{i+1}$ representing coefficients of relatively prime polynomials.
%\end{cor}

Corollary \ref{c:generic} allows us to enlarge the set of generic matrices $\fgl(n)_{\Omega}$ studied by Kostant and Wallach. 
\subsection{The new set of generic matrices $\fgltheta$}\label{s:theta}
We can expand the set of matrices $\fglomega$ studied by Kostant and Wallach by relaxing the condition that each cutoff is regular semisimple.  More precisely, let $\sigma(x_{i})$ denote the spectrum of $x_{i}\in\fgl(i)$, where $x_{i}$ is viewed as an element of $\fgl(i)$.  We define a Zariski open subset of elements of $\fgl(n)$ by 
$$
\fgltheta=\{x\in\fgl(n) | \; \sigma(x_{i-1})\cap\sigma(x_{i})=\emptyset,\, 2\leq i\leq n\}.
$$
%In otherwords, we still insist that adjacent cutoffs have no eigenvalues in common, but we now relax the condition imposed by Kostant and Wallach in the definition of $\fglomega$ that every cutoff is regular semisimple and just require that is regular.  
Clearly, $\fgltheta=\bigcup_{c\in\Theta_{n}}\glfibre$.  

\begin{thm}
%Let $\Phi:\fgl(n)\to\Co^{\frac{n(n+1)}{2}}$ be the map defined in (\ref{eq:map}).  Then $\Phi^{-1}(\Theta_{n})\cap\fgl(n)^{sreg}=\fgltheta$.   
The elements of $\fgltheta$ are strongly regular and therefore $\glsfibre=\glfibre$ for $c\in\Theta_{n}$.  Moreover, $\fgltheta$ is the maximal subset of $\fgl(n)$ for which the action of $A$ is transitive on the fibres of $\Phi$.
\end{thm}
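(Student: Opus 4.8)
The plan is to prove the three assertions of the theorem in turn, using Corollary \ref{c:generic} and Proposition \ref{prop:sreg} together with the characteristic polynomial computation in Proposition \ref{prop:charpoly}. First I would show that every $x \in \fgltheta$ is strongly regular. The key observation is that $\fgltheta = \bigcup_{c \in \Theta_n} \glfibre$, so it suffices to fix $c \in \Theta_n$ and show $\glfibre = \glsfibre$. By Theorem \ref{thm:general}, the number of $A$-orbits in $\glsfibre$ is $2^{\sum_i j_i}$ where $j_i$ is the number of common roots of $p_{c_i}(t)$ and $p_{c_{i+1}}(t)$; since $c \in \Theta_n$ means all $j_i = 0$, there is exactly one $A$-orbit in $\glsfibre$, and by Corollary \ref{c:generic} the action of $A$ on $\glsfibre$ is transitive. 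To upgrade this to $\glfibre = \glsfibre$, I would use Proposition \ref{prop:sreg}: for $x \in \glfibre$ with $c \in \Theta_n$, I must verify (a) each $x_i$ is regular and (b) $\fz_{\fgl(i-1)}(x_{i-1}) \cap \fz_{\fgl(i)}(x_i) = 0$. Condition (b) is where the disjoint-spectrum hypothesis does the work: if $z$ centralizes both $x_{i-1}$ and $x_i$, then viewing $z$ appropriately, $z$ commutes with $x_i$, and its restriction pattern forces it to vanish once $\sigma(x_{i-1}) \cap \sigma(x_i) = \emptyset$ — the centralizer of $x_i$ decomposes along generalized eigenspaces, and the intersection with $\fgl(i-1)$ lands in the block structure incompatible with disjoint spectra. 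Condition (a) requires an additional argument showing $x_i$ is regular; this should follow because $x_{i+1}$ being forced into the solution-variety form of Proposition \ref{prop:charpoly} (after conjugation, via Remark \ref{r:stupid}) with disjoint spectra forces the off-diagonal data to be in the ``disjoint'' case (\ref{eq:disjoint}), which in turn, working down from the top Hessenberg matrix as in the proof of Theorem \ref{thm:regorbits}, propagates regularity.

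The cleanest route, though, is probably to avoid re-deriving regularity from scratch and instead argue: Theorem \ref{thm:Hess} gives a unique Hessenberg matrix $h \in \glfibre$, which is strongly regular, so $\glsfibre \neq \emptyset$ and $h_i$ is regular for all $i$. Then for arbitrary $x \in \glfibre$ I would show by downward induction on $i$ that $x_i$ is regular and that $\fz_{\fgl(i-1)}(x_{i-1}) \cap \fz_{\fgl(i)}(x_i) = 0$, the inductive step being: given $x_i$ regular with characteristic polynomial $p_{c_i}(t)$ and $x_{i-1}$ with characteristic polynomial $p_{c_{i-1}}(t)$ having disjoint spectrum, a common centralizer element would have to commute with $x_i$ (so lie in the $(i-1) \times (i-1)$ cutoff of $\fz_{\fgl(i)}(x_i)$) and also centralize $x_{i-1}$; decomposing $\C^i$ into generalized $x_i$-eigenspaces and using that $x_{i-1}$ acts on the first $i-1$ coordinates with no eigenvalue shared with $x_i$, one deduces the element is zero. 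This is the main obstacle: carefully tracking how the disjointness of spectra rules out nontrivial common centralizers, and also confirming that regularity of $x_i$ (not just of the Hessenberg representative) holds for every element of the fibre when $c \in \Theta_n$ — a priori a fibre could contain non-regular cutoffs, so one genuinely needs the spectral hypothesis here, perhaps most transparently via the observation that $x_i$ regular is equivalent to $\dim \fz_{\fgl(i)}(x_i) = i$ and the common-centralizer vanishing forces the chain of centralizer dimensions to add up correctly.

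Finally, for maximality, I would argue that $\fgltheta$ is the largest subset on which $A$ acts transitively on the $\Phi$-fibres. Suppose $U \supseteq \fgltheta$ is an $A$-stable (hence $\Phi$-fibre-saturated, since fibres are $A$-stable) subset on which $A$ acts transitively on each fibre $\Phi^{-1}(c) \cap U$. If $U$ meets some fibre $\glfibre$ with $c \notin \Theta_n$, then some $j_i \geq 1$, and I want a contradiction. One clean way: transitivity of $A$ on a fibre forces the fibre to be a single $A$-orbit, hence irreducible (being the closure of an $A$-orbit, or directly: an $A$-orbit is irreducible and if it equals the fibre the fibre is irreducible); but by Theorem \ref{thm:general} combined with the fact that $\glsfibre$ has $2^{\sum j_i} \geq 2$ disjoint $A$-orbits which are its irreducible components (Theorem 3.12 in \cite{KW1}), $\glsfibre$ — and a fortiori $\glfibre$ — is reducible. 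Hence $A$ cannot act transitively on $\Phi^{-1}(c)$ for $c \notin \Theta_n$, even on any $A$-stable subset that meets it nontrivially in a way detecting distinct components. I should be slightly careful here about what ``transitive on the fibres of $\Phi$'' means for a subset $U$ that meets a fibre in a proper subset; the intended reading is that $U$ is a union of fibres (which is forced if $U$ is $A$-stable and we want the statement about fibres of $\Phi$ to make sense), so the argument reduces to: $c \in \Theta_n \iff A$ transitive on $\glfibre \iff \glfibre$ is a single $A$-orbit, which is exactly Corollary \ref{c:generic} plus Remark \ref{r:willsee} plus the first part of this theorem. The subtle point to get right is phrasing maximality so that it is both true and matches the informal claim in the introduction about $\fglomega \subsetneq \fgltheta$ being the maximal transitive locus.
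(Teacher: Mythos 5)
Your approach is genuinely different from the paper's. The paper does not go through Proposition \ref{prop:sreg} at all: it shows that when $p_{c_i}$ and $p_{c_{i+1}}$ are coprime, the solution variety $\Xi^i_{c_i,c_{i+1}}$ is a \emph{single} free $Z_i$-orbit consisting of regular elements (via the characteristic-polynomial conditions (\ref{eq:disjoint}) and Theorem \ref{thm:regorbits}), and then inducts \emph{upward} on $j$ --- $x_2$ lies in $\Xi^1_{c_1,c_2}$ automatically, hence is regular, hence conjugation by some $g_2 \in GL(2)$ puts $x_3$ into $\Xi^2_{c_2,c_3}$, and so on --- to conclude via Proposition \ref{r:sense} that $x \in Im\Gamman$ and hence is strongly regular by Theorem \ref{thm:Aorb}. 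Your maximality argument (via Corollary \ref{c:generic} plus the fact that for $c \notin \Theta_n$ the fibre has $\geq 2$ disjoint irreducible components) is fine and essentially the paper's.

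The gap is in your handling of condition (a) of Proposition \ref{prop:sreg}. Your first route (``working down from the top Hessenberg matrix as in the proof of Theorem \ref{thm:regorbits}'') doesn't make sense for an \emph{arbitrary} $x \in \glfibre$: the Hessenberg matrix is a single element of the fibre, and Remark \ref{r:stupid} cannot be applied to $x$ at level $i$ until you already know $x_i$ is regular, so the implicit induction must go upward as in the paper. Your second route (``the common-centralizer vanishing forces the chain of centralizer dimensions to add up correctly'') is too vague to close: condition (b) bounds the \emph{intersection} of successive centralizers but does not by itself bound $\dim\fz_{\fgl(i)}(x_i)$ from above. The clean elementary argument, if you want to avoid the $\Gamman$ machinery, is a kernel dimension count: if $\lambda \in \sigma(x_i)$ had geometric multiplicity $\geq 2$, then $\dim\bigl(\ker(x_i - \lambda) \cap \C^{i-1}\bigr) \geq 2 + (i-1) - i = 1$, and for any nonzero $v' \in \C^{i-1}$ with $(v',0)^T \in \ker(x_i - \lambda)$ one reads off $x_{i-1}v' = \lambda v'$, so $\lambda \in \sigma(x_{i-1})$, contradicting disjointness; hence each $x_i$ is regular. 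Similarly, for (b) your generalized-eigenspace idea should be sharpened to the observation that the last column $u$ of $x_i$ is necessarily a cyclic vector for $x_{i-1}$ when spectra are disjoint (any $x_{i-1}^T$-eigenvector annihilating $u$ would produce a left eigenvector $(w',0)$ of $x_i$ with eigenvalue in $\sigma(x_{i-1})$), so $z = p(x_{i-1})$ with $z u = 0$ forces $z = 0$. With those two lemmas filled in, your route is correct, more linear-algebraic and self-contained than the paper's; the paper's route is shorter only because the solution-variety apparatus is already in place.
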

\begin{proof}
%In the proof of Theorem \ref{thm:counting}, we noted that the number of $Z_{i}$-orbits of dimension $i$ in the solution variety at level $i$, $\sol\subset\fgl(i+1)$ is $2^{j_{i}}$ where $j_{i}$ is the number of overlaps in roots of the polynomials represented by $c_{i}$ and $c_{i+1}$.  
If $\pci$ and $\pcp$ are relatively prime polynomials, then we claim $\sol$ is exactly one free $Z_{i}$-orbit.  Indeed, in this case we only have the conditions (\ref{eq:disjoint}) for $1\leq k\leq r$.  Thus, we can apply our observation in (\ref{eq:ugvector}) to see that $\sol$ is one free $Z_{i}$-orbit and hence consists of regular elements of $\fgl(i+1)$ by Theorem \ref{thm:regorbits}.  Given $x\in\glfibre$ with $c\in\Theta_{n}$, we claim that $x\in Im\Gamman$ with $a_{i}=\sol$ for $1\leq i\leq n-1$.  Indeed, $x_{2}\in\Xi^{1}_{c_{1},c_{2}}$ and is therefore regular.  Thus, by Remark \ref{r:stupid}, there exists a $g_{2}\in GL(2)$ such that $(\Ad(g_{2})\cdot x)_{3}=(\Ad(g_{2})\cdot x_{3})\in \Xi^{2}_{c_{2}, c_{3}}$.  Now, suppose $x_{i+1}\in \Ad(GL(i))\cdot \sol$.  Thus, $x_{i+1}\in\fgl(i+1)$ is regular and Remark \ref{r:stupid} provides a $g_{i+1}\in GL(i+1)$ such that $(\Ad(g_{i+1})\cdot x)_{i+2}=\Ad(g_{i+1})\cdot x_{i+2}\in \Xi^{i+1}_{c_{i+1}, c_{i+2}}$.  By induction, $x_{j+1}\in \Ad(GL(j))\cdot\Xi^{j}_{c_{j}, c_{j+1}}$ for any $j$, $1\leq j\leq n-1$.  Proposition \ref{r:sense} implies that $x\in Im\Gamman$.  Thus, by Theorem \ref{thm:Aorb}, $\fgltheta\subset\fgl(n)^{sreg}$.  The rest of the Theorem follows from Corollary \ref{c:generic}. 
\end{proof} 

\begin{rem}
For a matrix $x\in \glfibre$ where $c\in\Theta_{n}$, its strictly upper triangular part is determined by its strictly lower triangular part.  This follows from the definition of the morphisms $\Gamman$ and the fact that all of the $y_{k,i}$ can be solved uniquely as regular functions of the $z_{k,i}$ for $1\leq i\leq n_{k}$,  $1\leq k\leq r$.
%This follows directly from the form the solution varieties $\Xi^{i}_{c_{i},\,c_{i+1}}$ in section \ref{s:gammasum} and the definition of the mappings $\Gamma_{n}^{a_{1},\cdots , a_{n-1}}$.  
\end{rem}

The fact that elements of $\fgltheta$ are strongly regular gives us the following corollary.
\begin{cor}
Let $x\in\fgltheta$.  Then $x_{i}\in\fgl(i)$ is regular for all $i$.  
\end{cor}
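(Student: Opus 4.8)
The plan is to deduce this directly from the theorem proved immediately above, which asserts in particular that $\fgltheta\subset\fgl(n)^{sreg}$. Given $x\in\fgltheta$ we therefore have $x\in\fgl(n)^{sreg}$, and then part~(a) of Proposition~\ref{prop:sreg} says exactly that $x_{i}\in\fgl(i)$ is regular for every $i$ with $1\leq i\leq n$. That is the whole argument: a one-line invocation of the preceding theorem together with Proposition~\ref{prop:sreg}(a). There is no real obstacle to overcome here; the statement is recorded as a corollary precisely because it is an immediate consequence of having already identified $\fgltheta$ as a subset of the strongly regular locus.

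If one prefers not to quote the strong regularity of $\fgltheta$ as a black box, the same conclusion can be read off from the \emph{body} of that theorem's proof. There it is shown that, for $c\in\Theta_{n}$, each $\sol$ is a single free $Z_{i}$-orbit and hence, by Theorem~\ref{thm:regorbits}, consists of regular elements of $\fgl(i+1)$, and that any $x\in\glfibre$ lies in $Im\Gamman$ with $a_{i}=\sol$. From $x\in Im\Gamman$ and Proposition~\ref{r:sense} one gets $x_{i+1}\in\Ad(GL(i))\cdot\sol$ for $1\leq i\leq n-1$; since regularity is invariant under conjugation, each $x_{i+1}$ is regular, and $x_{1}\in\fgl(1)$ is trivially regular. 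Either route gives the claim, and I would present the first one, since it is the shortest and makes the logical dependence on the preceding theorem transparent.
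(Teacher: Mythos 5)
Your argument is exactly the paper's: the corollary is introduced with the sentence ``The fact that elements of $\fgltheta$ are strongly regular gives us the following corollary,'' i.e.\ it follows from the preceding theorem together with Proposition~\ref{prop:sreg}(a). Your first paragraph is the intended proof, and the second paragraph is a correct (if unnecessary) unwinding of it.
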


Using Corollary \ref{c:generic} and Theorem \ref{thm:general}, we get a direct generalization of Theorem 3.23 in \cite{KW1} for the case of $\Theta_{n}$.
\begin{cor}
For $c\in\Theta_{n}\subset \Co^{\frac{n(n+1)}{2}}$,  $\glfibre\simeq Z_{1}\times\cdots\times Z_{n-1}$ as algebraic varieties.
\end{cor}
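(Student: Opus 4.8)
The plan is to assemble facts already established. Fix $c\in\Theta_{n}$. By the theorem just proved, $\glsfibre=\glfibre$, so it suffices to produce an isomorphism of varieties $\glfibre\simeq Z_{1}\times\cdots\times Z_{n-1}$. First I would observe that the group $Z_{i}$ is unambiguously attached to $c$: for every $x\in\glfibre$ the cutoff $x_{i}$ is regular by Proposition \ref{prop:sreg}(a) with characteristic polynomial $p_{c_{i}}(t)$, so its Jordan form, and hence its centralizer $Z_{i}$ in $GL(i)$ with eigenvalues in decreasing lexicographical order, depends only on $c_{i}$ and not on $x$.

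Next I would recall, from the proof of the preceding theorem, that since $p_{c_{i}}(t)$ and $p_{c_{i+1}}(t)$ are relatively prime for every $i$ (this is exactly the condition $c\in\Theta_{n}$), the solution variety $\sol$ is a single free $Z_{i}$-orbit. In particular $Z_{i}$ acts freely on $\sol$, so in the notation of section \ref{s:gengamma} one has $K_{a_{i}}=Z_{i}$ for the choice $a_{i}=\sol$; and the same proof shows, via Proposition \ref{r:sense}, that every $x\in\glfibre$ lies in $Im\Gamman$ for this choice of orbits, so that $\glfibre=\glsfibre=Im\Gamman$.

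Finally I would invoke Theorem \ref{thm:embedd}, which (as remarked in section \ref{s:gengamma}, with $Z_{i}$ in place of the general groups $K_{a_{i}}$) asserts that $\Gamman : Z_{1}\times\cdots\times Z_{n-1}\to\fgl(n)_{c}\cap S$ is an isomorphism onto its image. Composing with the identification $Im\Gamman=\glfibre$ from the previous step yields the desired isomorphism of algebraic varieties $Z_{1}\times\cdots\times Z_{n-1}\simeq\glfibre$. Equivalently, one may argue from Theorem \ref{thm:general} and Corollary \ref{c:generic}: the latter shows that $\glsfibre$ is a single orbit of the free algebraic $Z$-action of Theorem \ref{thm:sregact}, hence a $Z$-torsor, and the explicit regular inverse $\Psi$ constructed in the proof of Theorem \ref{thm:embedd} is precisely what promotes the bijective orbit map to an isomorphism of varieties.

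I do not expect any genuine obstacle; the content is carried entirely by the already-proved Theorem \ref{thm:embedd} together with the structure of $\sol$ for coprime $p_{c_{i}},p_{c_{i+1}}$. The only point needing a word of care is the identification $K_{a_{i}}=Z_{i}$, which is exactly (\ref{eq:freely}) applied to the single-orbit variety $\sol$ (using Lemma \ref{l:conn}, that the stabilizers are connected, to pass from freeness to the equality of groups).
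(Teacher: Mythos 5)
Your argument is correct and follows essentially the route the paper intends: the paper simply says the corollary follows from Corollary \ref{c:generic} and Theorem \ref{thm:general}, which is your ``equivalent'' second formulation, while your primary route just makes the same reasoning explicit by tracing it back to Theorem \ref{thm:embedd}. One small quibble on the last ``word of care'': the identification $K_{a_i}=Z_i$ does not really need (\ref{eq:freely}) or Lemma \ref{l:conn} here, since the proof of the $\Theta_n$ theorem establishes directly (via the construction around (\ref{eq:ugvector}) and (\ref{eq:prodorb})) that $\sol$ is a single \emph{free} $Z_i$-orbit; freeness then gives trivial stabilizer, and Theorem \ref{thm:split} immediately yields $K_{a_i}=Z_i$.
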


\bibliographystyle{plain.bst}

\bibliography{bibliography}

\end{document}